\newtheorem{theorem}{Theorem}[section]
\newtheorem{lemma}[theorem]{Lemma}
\newtheorem{proposition}[theorem]{Proposition}
\theoremstyle{definition}
\newtheorem{remark}[theorem]{Remark}
\numberwithin{equation}{section}
\newcommand{\R}{\mathbb{R}}
\mathchardef\emptyset="001F
\title[Optimal Regularity and Free Boundary in Cohesive zone models]
{Optimal Regularity and Structure of the Free Boundary\\ for Minimizers in Cohesive zone models}
\author[L. Caffarelli, F. Cagnetti, A. Figalli]{L. Caffarelli, F. Cagnetti, A. Figalli}
\address[L. Caffarelli]{Department of Mathematics, The University of Texas at Austin, 1 University Station C1200, Austin TX 78712, USA}
\email{caffarel@math.utexas.edu}
\address[F. Cagnetti]
{University of Sussex, Pevensey 2, Department of Mathematics, 
BN1 9QH, Brighton, United Kingdom}
\email{f.cagnetti@sussex.ac.uk}
\address[A. Figalli]{Department of Mathematics, ETH Z\"urich, R\"amistrasse 101, 
8092 Zurich, Switzerland}
\email{alessio.figalli@math.ethz.ch}
\date{\today}
\begin{document}
\maketitle

\begin{center}
\begin{minipage}{11.5cm}
\small{
\noindent {\bf Abstract.} 
We study optimal regularity and free boundary 
for minimizers of an energy functional arising in 
cohesive zone models for fracture mechanics.
Under smoothness assumptions on the boundary conditions
and on the fracture energy density, we show that 
minimizers are $C^{1, 1/2}$, 
and that near non-degenerate points the fracture set is $C^{1, \alpha}$, for some $\alpha \in (0, 1)$.

\vspace{10pt}
\noindent {\bf Keywords:} Fracture Mechanics,
Cohesive Zone Models, Regularity of Minimizers, Structure of Fracture Set

\vspace{6pt}
\noindent {\bf 2010 Mathematics Subject Classification:} 
35R35 
(35B65)  
}
\end{minipage}
\end{center}

\bigskip

\begin{section}{Introduction}

In recent years, a variational formulation of fracture evolution has been proposed 
by Francfort and Marigo \cite{FrMa98}, 
and later developed by Dal Maso and Toader \cite{DT02}, and
Dal Maso, Francfort, and Toader \cite{DMFT, DMFT2} 
(see also \cite{Mielhand} and the references therein, for 
a variational theory of rate independent processes).
Such evolution is based on the idea that at any given time 
the configuration of the elastic body is an absolute minimiser 
of the energy functional (see also \cite{ACFS, C, DalToa02}, 
and \cite{DDMM} in the context of plasticity 
where, more in general, critical points of the energy are allowed).

%
In this paper we study optimal regularity and free boundary 
for minimizers of an energy functional arising in 
cohesive zone models for fracture mechanics.
Such models describe the situation in which the energy density 
of the fracture depends on the distance between the lips of the crack
(see for instance \cite{ACFS, C, CT, CLO, DMZ}). 
We consider the energy functional associated to an elastic body occupying
the open strip $\mathbb{R}^n \times (-A, A)$, with $n \geq 2$ and $A > 0$.
Denoting a generic point $z \in \mathbb{R}^n \times (-A, A)$
by $(x, y)$, with $x \in \R^n$ and $y \in (-A, A)$, we shall consider deformations  ensuring that  
cracks can only appear on the hyperplane $\{ y = 0 \}$.
The assumption of confining fractures to a given hyperplane is 
a standard simplification that avoids some technical difficulties but does not prevent the crack set from being irregular, thus keeping the main features of the problem.

We consider the situation in which the elastic body 
can only undergo deformations that are parallel
to a fixed given direction lying on $\{ y = 0 \}$.
In this way, the displacement can be represented by a scalar function 
$\textup{v}: \mathbb{R}^n \times (-A, A) \to \R$.
According to Barenblatt's cohesive zone model \cite{Bar62}, 
the energy associated to a displacement $\textup{v} \in H^1 (\mathbb{R}^n \times (-A, A) \setminus \{ y =  0 \})$ is given by  
\begin{equation} \label{energian}
E(\textup{v}):=\frac{1}{2}\int_{\R^n \times (-A, A) \setminus \{ y = 0\}} | \nabla \textup{v} |^2 dz
+ \int_{\R^n} g (| [\textup{v}] |) \, d x.
\end{equation}
Here, $[\textup{v}] = \textup{v}_{\scriptscriptstyle RT} - \textup{v}_{\scriptscriptstyle LT}$, 
where $\textup{v}_{\scriptscriptstyle RT}$ and $\textup{v}_{\scriptscriptstyle LT}$ are the right and left traces on $\{ y=  0 \}$
of $\textup{v} \mid_{\R^n \times (0, A)}$ and 
$\textup{v} \mid_{\R^n \times (-A, 0)}$, respectively, 
and   
$g \in C^2[0, \infty)\cap C^3 (0, \infty)$ is strictly increasing, bounded,
with $g (0) = 0$ and $g'(0^+) \in (0,+\infty)$.
The parameter $g' (0^+)$ has an important physical meaning, 
and it can be identified with the \textit{maximal sustainable stress} 
of the material along $\{ y =  0\}$, see \cite[Theorem~4.6]{C}.
A critical point $u$ of \eqref{energian} with boundary conditions
$u_{\scriptscriptstyle A}, u_{\scriptscriptstyle -A}$ satisfies (see \cite[Proposition~3.2]{C}):
\begin{equation*} 
\begin{cases}
\Delta u = 0  \, &\textnormal{ in } \, \R^{n} \times (-A, A) \setminus \{ y= 0\}, \\
u =  u_{\scriptscriptstyle A} \, &\textnormal{ on } \{ y =  A \}, \\
u =  u_{\scriptscriptstyle -A} \, &\textnormal{ on } \{ y =  - A \}, \\
\partial_{y} u_{\scriptscriptstyle RT}= \partial_{y} u_{\scriptscriptstyle LT} \, &\textnormal{ on } \{ y= 0\}, \\
|\partial_{y} u | \leq g' (0^+) \, 
&\textnormal{ on } \{ y= 0\}, \\
\partial_{y} u = g' (|[u]|) \, \textnormal{sgn} ([u])
\, &\textnormal{ on } \{ y = 0 \} \cap \{ [u] \neq 0 \},
\end{cases}
\end{equation*}
where $\textnormal{sgn} (\cdot)$ denotes the sign function. Note that, because $\partial_{y} u_{\scriptscriptstyle RT} = \partial_{y} u_{\scriptscriptstyle LT}$, we can use the notation $\partial_yu$ to denote the $y$ derivative of $u$ on $\{y=0\}$ without paying attention to the side on which the derivative is computed.

For simplicity, we will assume $u_{\scriptscriptstyle A} (x) = - u_{\scriptscriptstyle -A}(x)$ for every $x \in \R^n$,
and we will focus on solutions that are odd 
with respect to the hyperplane $\{ y = 0 \}$.
In this situation, our problem reduces to the study 
of a function $u \in H^1 (\R^n \times (0,A))$ satisfying
\begin{equation} \label{defcritptn}
\begin{cases}
\Delta u = 0  \, &\textnormal{ in } \mathbb{R}^n \times (0,A), \\
u =  u_{\scriptscriptstyle A} \, &\textnormal{ on } \{ y =  A \}, \\
|\partial_{y} u | \leq g' (0^+) \, 
&\textnormal{ on } \{ y = 0 \}, \\
\partial_{y} u = g' (2|u|) \, \textnormal{sgn} (u) 
\, &\textnormal{ on }\{ y = 0 \} \cap \{ u \neq 0 \},
\end{cases}
\end{equation}
where we used the notation $u(x, 0) = u_{\scriptscriptstyle RT} (x, 0)$ for every $x \in \R^n$.
In this setting, the crack $K_u$ is represented by the discontinuity set of $u$,
and is given by 
\begin{equation} \label{fracture}
K_u := \{ (x, 0) : x \in \R^n, \, u (x, 0) \neq 0 \} \subset \R^n.
\end{equation}
We assume that the boundary condition $u_{\scriptscriptstyle A}$ satisfies the following:
\begin{equation} \label{assumptions uA}
u_{\scriptscriptstyle A} \in 
H^{1/2} (\mathbb{R}^n) \cap  C^{2, \beta} (\R^n) \text{ for some } \beta \in (0,1) \quad \text{ and } \quad 
\lim_{|x| \to \infty} u_{\scriptscriptstyle A} (x) = 0.
\end{equation}
Under these assumptions, we want to the study optimal regularity of 
the restriction of $u$ to $\R^n \times [0,A]$, and the regularity 
of the free boundary $\partial K_u$ (where the boundary is defined in the topology of $\mathbb{R}^n \times \{ 0 \}$).

\medskip

A major obstacle to the regularity of solutions is the possible presence 
of fracture points where $u$ changes sign.
Indeed, at such points the normal derivative $\partial_y u (\cdot, 0)$
is discontinuous with a jump of $2 g' (0^+)$,
due to the term $\text{sgn} (u)$ appearing in \eqref{defcritptn}. 
Our main contribution in this paper is to show that this possibility can never occur.

Problems of this type, where two phases (in this case the sets 
$\{ x \in \R^n : u (x,0) > 0 \}$
 and $\{ x \in \R^n : u (x,0) < 0 \}$)
can ``touch'' at a lower dimensional free boundary, 
have recently been studied by 
Allen and Petrosyan,  \cite{AP}, Allen \cite{A},
and  Allen, E. Lindgren and A. Petrosyan \cite{ALP}.
In these papers, to show the separation of phases
the authors use in a clever way the Alt-Caffarelli-Friedman and the Weiss monotonicity formulas.
Our approach is different and, although it requires 
$g$ to be sufficiently smooth, it  does not rely on monotonicity formulas.
Therefore, it can be applied to problems 
where the Laplacian is replaced by more general operators.



To show the separation of phases, we begin by proving some interesting general properties of the solutions, such as the fact that the crack set $K_u$ is bounded 
(see Lemma~\ref{infinitesimal} and Proposition~\ref{crackset}).
After that, we prove that certain regularity properties of $u_{\scriptscriptstyle A}$ ``propagate'' 
to $u (\cdot, y)$.  More precisely, for every $y \in [0, A)$ 
we prove that $u (\cdot, y)$ is Lipschitz continuous, 
that $u^+ (\cdot, y):= \max\{ u (\cdot, y), 0 \}$ is semiconvex, 
and that $u^- (\cdot, y):= \min\{ u (\cdot, y), 0 \}$ is semiconcave,
see Lemma~\ref{Lip2}, Lemma~\ref{semiconc2}, and
Lemma~\ref{semiconc3}.
Let us mention that, to show these regularity properties, we need to assume
$2 \| g'' \|_{L^{\infty}} < 1/A$. That is, we need the size $A$ of the strip 
to be sufficiently small, once the elastic properties of the material are given. 
As shown in Lemma \ref{lem:min}, under this assumption critical points are unique 
and therefore coincide with the global  minimizer.
We think this bound to be sharp, and this is in agreement with 
an explicit example given in \cite[Theorem~9.1 and Theorem~9.2]{C}, 
where uniqueness fails if $2 \| g'' \|_{L^{\infty}} > 1/A$.

Actually, in Lemmata~\ref{semiconc2} and~\ref{semiconc3} we prove a stronger property than the semiconvexity (resp. semiconcavity) of $u^+$ (resp. $u^-$),
since we need an estimate that allows us to ``connect'' the behavior of $u^+$ and $u^-$ near the set $\{u=0\}$
(see Remark \ref{useful phases sep}).
This plays indeed a crucial role in the proof of Proposition~\ref{phaseseparation}, where 
we prove that the two phases $\{ x \in \R^n : u (x,0) > 0 \}$
and $\{ x \in \R^n : u (x,0) < 0 \}$ are well separated.
We achieve this  in the following way:
First of all, exploiting Remark \ref{useful phases sep},
we prove that if
$(\overline{x}, 0) \in \partial K_u$ is any free boundary point 
where the sign of $u$ changes, then $u(\cdot, 0)$ is differentiable at $\overline x$
and $\nabla_x u (\overline{x}, 0) =  0$.
This, in turn, allows us to construct some   suitable barriers
from which we reach a contradiction.

Once we know that the sets $\{ u > 0\} \cap \{ y = 0 \}$ 
and $\{ u > 0 \} \cap \{ y = 0 \}$ are well separated, 
we can adapt to our setting the arguments used in \cite{AC, CF} to prove the optimal regularity of solutions:
\begin{theorem} \label{opt_reg_theorem}
Let $u_{\scriptscriptstyle A}$ satisfy \eqref{assumptions uA}, and let 
$g \in C^2[0, \infty) \cap C^3 (0, \infty)$ be strictly increasing and bounded,
with $g(0)=0$ and $g'(0^+) \in (0,+\infty)$.
Suppose, in addition, that $2 \| g'' \|_{L^{\infty}} < 1/A$, 
$\| g''' \|_{L^{\infty}} < \infty$,
and that $u \in H^1 (\R^n \times (0,A))$ is a solution of \eqref{defcritptn}. 
Then, $u \in C^{1,1/2}$.
\end{theorem}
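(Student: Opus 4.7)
The plan is to localize the analysis via Proposition~\ref{crackset} and Proposition~\ref{phaseseparation}, and then reduce the main case to a perturbed thin obstacle (Signorini) problem that can be treated following \cite{AC, CF}. Three regimes arise according to the behaviour of the trace $u(\cdot,0)$. Away from $K_u$, $u$ vanishes on $\{y=0\}$ and is harmonic with $C^{2,\beta}$ Dirichlet datum at $\{y=A\}$, so Schauder theory gives smoothness up to the boundary. In the interior of $K_u$, Lemma~\ref{Lip2} yields $u(\cdot,0)$ Lipschitz with a fixed sign in a neighbourhood, so the Neumann datum $\partial_y u = g'(2|u|)\,\mathrm{sgn}(u)$ is Lipschitz in $x$; standard boundary estimates for harmonic functions with Lipschitz Neumann data then give $u \in C^{1,\alpha}_{\mathrm{loc}}$, in particular $C^{1,1/2}$.

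The core case is a point $(x_0,0) \in \partial K_u$. By Proposition~\ref{phaseseparation}, on a ball $B_r(x_0)$ we may assume, after a sign change, $u(\cdot,0) \geq 0$. Setting $\tilde u(x,y) := u(x,y) - g'(0^+)\, y$, the function $\tilde u$ is harmonic and, using the concavity of $g$ together with the bound $|\partial_y u| \leq g'(0^+)$, on $\{y=0\} \cap B_r(x_0)$ it satisfies the complementarity
\begin{equation*}
\tilde u \geq 0, \qquad \partial_y \tilde u \leq 0, \qquad \tilde u\, \partial_y \tilde u = u\bigl(g'(2u) - g'(0^+)\bigr),
\end{equation*}
where the last quantity is $O(u^2)$ since $g \in C^2[0,\infty)$. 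Thus $\tilde u$ solves a perturbed Signorini problem: the classical orthogonality $\tilde u\,\partial_y \tilde u = 0$ holds modulo a term of lower order with respect to the $3/2$-homogeneous scaling natural to the problem.

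From here I would adapt the arguments of \cite{AC, CF}. The tangential derivatives $\partial_{x_i} u$ are bounded by Lemma~\ref{Lip2}; the refined semiconvexity/semiconcavity of $u^\pm$ (Lemmas~\ref{semiconc2}, \ref{semiconc3}) together with Remark~\ref{useful phases sep} give the lateral control across $\partial K_u$ needed to run an Alt--Caffarelli--Friedman monotonicity formula for the positive and negative parts of $\partial_{x_i}u$. A blow-up at $(x_0,0)$, rescaled at the $3/2$-rate $\tilde u_\lambda(x,y) := \lambda^{-3/2} \tilde u(x_0 + \lambda x, \lambda y)$, converges to a global solution of the linear Signorini problem, and the classification of $3/2$-homogeneous Signorini profiles yields the optimal growth
\begin{equation*}
|\tilde u(x,y)| \;\leq\; C \bigl(|x-x_0|^2 + y^2\bigr)^{3/4}
\end{equation*}
near $(x_0,0)$; a standard Campanato iteration converts this into the desired $C^{0,1/2}$ modulus of continuity for $\nabla u$.

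The main obstacle is precisely the nonlinearity: the problem is not true Signorini, only a perturbation, and the boundary contribution $\partial_y \tilde u = g'(2u) - g'(0^+) = O(u)$ on $\{u>0\}$ must be shown to be subcritical with respect to the $3/2$-scaling so as not to pollute the blow-up limit. Under the rescaling $\tilde u_\lambda$ this term has size $\lambda^{-1/2} O(\lambda^{3/2}) = O(\lambda)$ and hence vanishes as $\lambda \to 0$. This is where $g \in C^2$ and the refined one-sided bounds on $u^\pm$ from Lemmas~\ref{semiconc2}--\ref{semiconc3} become essential: they are what allow both the monotonicity formula and the blow-up procedure to close.
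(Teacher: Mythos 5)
Your overall strategy matches the paper's: localize via Proposition~\ref{crackset}, use Proposition~\ref{phaseseparation} to reduce to a one-signed trace near a free boundary point, pass to $\tilde u = u - g'(0^+)y$, observe that this is a perturbed Signorini problem, and invoke the machinery of \cite{AC,CF}. Your three-regime split at the start is also fine, though a bit redundant since the $\partial K_u$ case is the only nontrivial one.

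There is, however, a genuine gap in the key technical step you gloss over. The arguments of \cite{AC,CF} require a uniform lower bound on $D^2_{xx}u$ in an \emph{interior} neighbourhood $B_{\hat r}(x_0,0)\cap\{y>0\}$, not just on the trace. You invoke Lemmas~\ref{semiconc2}--\ref{semiconc3}, but those control $D^2_{xx}u^{\pm}(\cdot,y)$ for each $y$, not $D^2_{xx}u$ itself: after phase separation $u(\cdot,0)=u^+(\cdot,0)$ on the thin hyperplane, but for $y>0$ the harmonic function $u$ need not coincide with $u^+$, so the slice-wise semiconvexity of $u^+$ gives no bound on $D^2_{xx}u$ away from $\{y=0\}$. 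The paper handles this with a decomposition $u = u_1 + u_2 + u_3$ into harmonic functions in the strip with boundary data $(0,u_A)$, $(u^+,0)$, $(u^-,0)$ respectively: $u_1$, $u_3$ are smooth near $(x_0,0)$ (the latter because $u^-\equiv 0$ in $B^n_{r_0}(x_0)$), and the barrier argument of Lemma~\ref{semiconc2} is rerun on the \emph{nonnegative} harmonic function $u_2$ to obtain $D^2_{xx}u_2\geq -\overline D$. Summing gives the required interior semiconvexity $D^2_{xx}u\geq -D'$. Without some version of this step your plan does not close.

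A secondary, more minor issue: you prescribe the blow-up at the $3/2$-rate $\tilde u_\lambda = \lambda^{-3/2}\tilde u(x_0+\lambda\cdot)$ and use it to \emph{derive} the $3/2$-growth, which is circular as written. The growth exponent has to come out of the monotonicity formula / classification, not be put in. Your earlier remark that the nonlinear term $g'(2u)-g'(0^+) = O(u)$ is subcritical with respect to that scaling is a useful observation and is indeed what lets one treat the problem as a perturbation of linear Signorini, but the order in which the growth is established needs care.
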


\medskip
Once both phase separation and optimal regularity of $u$ are obtained, we deal with the regularity of the free boundary.
To this aim, we proceed by applying more standard techniques,
which are specific to operators for which monotonicity formulas are available.
Assuming without loss of generality that we are at a free boundary point coming from the positive phase, we subtract from $u$ the linear function $g' (0^+) y$, 
and then we reflect evenly with respect to the hyperplane $\{ y = 0\}$, defining
\begin{equation} \label{v intro}
v (x, y):=
\begin{cases}
u (x, y) - g' (0^+) y & \text{ for every }(x, y) \in \R^n \times (0, A), \\
v (x, -y) & \text{ for every } (x, y) \in \R^n \times (-A, 0).
\end{cases}
\end{equation}
Then, inspired by \cite{CSS}, we prove a variant of Almgren's monotonicity formula.
More precisely, suppose that $(0, 0) \in \partial K_u$, and set 
$$
\Phi_v (r):= r \frac{d}{dr} \log \left( \max\{ F_v (r) , r^{n+4} \} \right),
\qquad 
F_v (r) := \int_{\partial B_r} v^2 d \mathcal{H}^n,
$$ 
where $B_r$ is the ball of $\R^{n+1}$ centred at $0$ with radius $r$,
and $\mathcal{H}^n$ denotes the Hausdorff $n$-dimensional measure.
We show that there exists $C > 0$ such that for $r$ sufficiently small 
the function $r \mapsto \Phi_v (r) e^{C r}$ is nondecreasing
(see Proposition~\ref{9}).
This implies that $\Phi_v (0^+)$ exists, and we can show that either 
$\Phi_v (0^+) = n+3$, or $\Phi_v (0^+) \geq n+4$ (see Proposition~\ref{Phi}).
This allows us to classify subquadratic blow up profiles of $v$: more precisely, considering the family $\{v_r\}_{r>0}$ of functions 
$$
v_r (z) := \frac{v (r z)}{d_r}, \qquad \qquad d_r:= \left( \frac{F_v (r)}{r^n} \right)^{1/2},
$$
we can classify the possible limits as $r \to 0^+$ provided $\frac{d_r}{r^2}\to +\infty$.

In other words, provided $v$ decays slower than quadratic, we obtain the following theorem, 
which is the second main result of the paper.
\begin{theorem} \label{free_bound_theorem}
Let the assumptions of Theorem~\ref{opt_reg_theorem} be satisfied, 
and let $u \in H^1 (\R^n \times (0,A))$ be a solution of \eqref{defcritptn}. 
Suppose that $(0,0) \in \partial K_u$, with $u (\cdot, 0) \geq 0$ near $(0,0)$,
and let $v$ be defined by \eqref{v intro}. If
\begin{equation}
\label{eq:superquadr}
\liminf_{r \to 0^+} \frac{d_r}{r^2} = + \infty,
\end{equation}
then the free boundary 
$\partial K_u$ is of class $C^{1, \alpha}$ near $(0,0)$, for some $\alpha \in (0,1)$.
\end{theorem}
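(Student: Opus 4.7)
The plan is to follow the blow-up strategy sketched before the statement, in the spirit of \cite{CSS}. As a first step, I would identify the Almgren frequency at the origin. By the perturbed monotonicity (Proposition~\ref{9}), $\Phi_v(0^+)$ exists, and by Proposition~\ref{Phi} it equals either $n+3$ or is at least $n+4$. Assumption \eqref{eq:superquadr} says $F_v(r)/r^{n+4} = d_r^2/r^4 \to \infty$, so for small $r$ one has $F_v(r) > r^{n+4}$ and hence $\Phi_v(r) = r\frac{d}{dr}\log F_v(r)$. If $\Phi_v(0^+) \geq n+4$ held, integrating this lower bound on a small interval would yield $F_v(r) \leq C\, r^{n+4}$, contradicting \eqref{eq:superquadr}. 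Therefore $\Phi_v(0^+) = n+3$, corresponding to homogeneity $3/2$.

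Next, I would perform the blow-up. The rescalings $v_r(z) := v(rz)/d_r$ satisfy $\int_{\partial B_1} v_r^2\, d\HH^n = 1$ by construction. The almost-monotonicity of $\Phi_v$ combined with a doubling inequality gives uniform $H^1_{\loc}(\R^{n+1})$ bounds on $\{v_r\}$, so up to a subsequence $v_r\to v_0$ locally uniformly and in $H^1_{\loc}$. The limit $v_0$ is harmonic on $\R^{n+1}\setminus \{y=0\}$, even in $y$, nonnegative on $\{y=0\}$, and satisfies the Signorini complementarity condition there: $v_0\geq 0$, $\partial_y v_0(\cdot,0^+)\leq 0$, $v_0\,\partial_y v_0(\cdot,0^+)=0$. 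Indeed, the Neumann correction $g'(2u)-g'(0^+) = O(|u|)$ rescales to $O(d_r\|v_r\|_{L^\infty(B_R)})$, which is negligible against the natural scale $d_r/r$ of $\partial_y v_r$ under \eqref{eq:superquadr}. Since $\Phi_{v_0}\equiv n+3$, $v_0$ is $3/2$-homogeneous, and by the Caffarelli--Salsa--Silvestre classification one obtains
\[
v_0(x,y) \,=\, c\, \mathrm{Re}\bigl((e\cdot x + i|y|)^{3/2}\bigr)
\]
for some unit vector $e\in\SF^{n-1}$ and some $c>0$.

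The classification says the blow-up free boundary at $0$ is the half-hyperplane $\{x\cdot e\leq 0,\,y=0\}$, i.e.\ $\partial K_u$ is flat in the blow-up. To upgrade this pointwise information to $C^{1,\alpha}$ regularity near the origin, I would first note that assumption \eqref{eq:superquadr} and the monotonicity machinery are stable along $\partial K_u$, so that every free boundary point $z_0$ close to $0$ admits a blow-up of the same form with direction $e(z_0)\in \SF^{n-1}$. Then I would run the standard improvement-of-flatness / epiperimetric-type argument of \cite{CSS}: using the uniform frequency estimate and the nondegeneracy provided by \eqref{eq:superquadr}, one shows uniqueness of the blow-up at every such $z_0$ and H\"older continuity of $z_0\mapsto e(z_0)$, from which the $C^{1,\alpha}$ regularity of $\partial K_u$ follows.

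The main technical obstacle is that $v$ is \emph{not} a solution of the pure thin obstacle problem: on $\{y=0\}\cap\{u>0\}$ one has the perturbed Neumann condition $\partial_y v = g'(2u)-g'(0^+)$, which vanishes only to first order in $|u|$. This perturbation already produces the exponential correction $e^{Cr}$ in the monotonicity, and at the blow-up step it forces the subquadratic lower bound \eqref{eq:superquadr} to play a double role: it selects the frequency $n+3$ and, simultaneously, guarantees that the rescaled Neumann defect is of lower order with respect to the natural scale of $v_r$. Making these two quantitative facts mesh with the CSS improvement-of-flatness scheme, which was designed for the unperturbed Signorini problem, is the delicate point to verify.
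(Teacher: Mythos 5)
Your analysis up to the classification of the blow-up is correct and matches the paper's approach: you identify $\Phi_v(0^+)=n+3$ (the paper derives this equivalently from the blow-up itself rather than by first excluding $\Phi_v(0^+)\geq n+4$, but your integration argument using the almost-monotonicity is perfectly valid), you pass to the rescalings $v_r$, obtain weak $W^{1,2}$ and local $C^{1,\gamma}$ compactness, show the limit solves the unperturbed Signorini problem (with the perturbation vanishing since $r_k/d_{r_k}\cdot d_{r_k}=r_k\to 0$), and invoke the ACS classification to get $v_\infty=c\,\rho^{3/2}\cos(3\theta/2)$.

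However, the passage from the blow-up classification to $C^{1,\alpha}$ regularity --- which is the substance of the theorem --- is not actually carried out, and what you point at is not what the paper (or \cite{CSS}) does. You describe the step as a ``standard improvement-of-flatness / epiperimetric-type argument of \cite{CSS}''; but \cite{CSS} (and likewise \cite{ACS}) proceed by a quite different route, which is also the route followed here. The paper first establishes that the free boundary is \emph{Lipschitz}: it shows via a comparison-principle argument (a variant of \cite[Lemma~5]{ACS}, Lemma~\ref{lemma h}) that $\partial_\tau v\geq 0$ near the origin for all directions $\tau=a\mathbf e_n+b\mathbf e$ in a cone around $\mathbf e_n$, and this is where the perturbed Neumann condition must be handled directly --- one needs the weaker replacement $\partial_y h\leq \sigma(\eta)$ on $\{h\neq 0\}$ instead of the clean Signorini sign condition. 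Then, to upgrade from Lipschitz with a cone of arbitrary opening to $C^1$, the paper uses the upper-semicontinuity of the translated frequency $\hat x\mapsto\Phi_{v_{\hat x}}(0^+)$ (as an infimum of continuous functions) together with the dichotomy of Proposition~\ref{Phi} to propagate the value $n+3$ to all nearby free boundary points; your remark that ``the monotonicity machinery is stable along $\partial K_u$'' gestures at this but does not supply the semicontinuity mechanism. Finally, $C^{1,\alpha}$ is obtained not by epiperimetric inequality but by a boundary Harnack estimate: one rescales the tangential derivatives $h=\partial_{x_n}v$ and $h_i=\partial_{\tau_i}v$, observes that on the positivity set they satisfy $(\Delta^{1/2}h_r^+)(x)\geq F(x)$ with $|F|\lesssim r^2/d_r(1+\|g''\|_{L^\infty})\to 0$, and applies \cite[Theorem~1.6]{ROS} to the nonnegative functions $h^+_{i,r}$, $h^+_r$ sharing the same zero set to conclude that the ratio $h_{i,r}/h_r$ --- and hence $\nabla f=-(\partial_{x_1}v/\partial_{x_n}v,\dots)$ --- is $C^{0,\alpha}$. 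This last step, relying on a boundary Harnack theorem that tolerates a small right-hand side, is precisely where the ``delicate point'' you flag (meshing the perturbation with the Signorini machinery) is actually resolved, and it is missing from your proposal. As written, the proposal stops short of proving the theorem: the core regularity step is replaced by a pointer to a scheme that is neither what the cited reference uses nor obviously adaptable to the perturbed equation.
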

To prove Theorem~\ref{free_bound_theorem} we show that \eqref{eq:superquadr} implies that $\Phi_v$ attains its smallest possible value, namely $\Phi_v (0^+) = n +3$, and that in this case
blow up profiles of $v$ are homogeneous solutions of the classical Signorini problem 
(i.e. the classical thin obstacle problem), with homogeneity degree $1/2(\Phi_v (0^+) - n)$. 
Thanks to this fact, the blow ups can be easily classified
(see Proposition~\ref{blow up limit}) and the result follows as in the classical theory.\\

The paper is organised as follows. 
In Section 2 we introduce the notation and the setting of the problem.
We show basic regularity properties of the solution $u$ in Section 3, 
while Section~4 is devoted to the separation of phases and the optimal regularity.
Frequency formula is the subject of Section~5,
and in Section 6 we study blow up profiles.
Finally, in Section 7 we prove the regularity of the free boundary.
\end{section}

\begin{section}{Notation}

In this brief section we introduce the notation that will be used, and we give the main assumptions.
Throughout the paper, we fix $n \in \mathbb{N}$, with $n \geq 2$, and $A > 0$.
For every point $z \in \R^{n} \times [-A, A]$ we will write $z = (x, y)$, with $x \in \R^n$ and $y \in [-A, A]$.
The canonical basis of $\R^{n+1}$ is denoted by $\mathbf{e}_1,\dots, \mathbf{e}_{n+1}$. 
For $a, b \in \R^{n+1}$, $a \cdot b$ denotes the Euclidean scalar product between $a$ and $b$,
and $|\cdot|$ denotes both the absolute value in $\R$ and the Euclidean norm in $\R^n$ or $\R^{n+1}$, 
 depending on the context.
For every $k \in \mathbb{N}$, $\mathcal{H}^k$ stands for the Hausdorff $k$-dimensional measure.
If $z = (x, y) \in \R^{n+1}$ and $r> 0$, we will denote by $B_r (z)$ 
the ball of $\R^{n+1}$ centered at $z$ with radius $r$:
$$
B_r (z) = \{ \overline{z} \in \R^{n+1} : | \overline{z} - z | < r \},
$$
and with $B^n_r (x)$ the ball of $\R^{n}$ centered at $x$ with radius $r$:
$$
B^n_r (x) = \{ \overline{x} \in \R^{n} : | \overline{x} - x | < r \}.
$$
We will write $B_r$ and $B^n_r$ for $B^n_r (0)$ and $B^n_r (0)$, respectively, 
and we will use the notation $\mathbb{S}^{n}:= \partial B_1$ and
$\mathbb{S}^{n-1}:= \partial B^n_1$, while
$\omega_{n+1}$ denotes the $(n+1)$-dimensional Lebesgue measure of $B_1$. 

Throughout all the paper, $C$ will denote a universal constant, possibly different 
from line to line.
For any function $\textup{v} \in H^1 ( \R^n \times (-A, A) \setminus \{ y = 0 \})$, 
we will denote by $\textup{v}_{\scriptscriptstyle RT}$ and $\textup{v}_{\scriptscriptstyle LT}$ the right and left traces on $\{y=0\}$ of  $\textup{v}\mid_{\R^n \times (0, A)}$ and $\textup{v}\mid_{\R^n \times (-A, 0)}$, respectively, 
while we set
$$
\textup{v}^+ := \max\{ \textup{v}, 0 \} \quad \text{ and }  \quad \textup{v}^- := \min\{ \textup{v}, 0 \}, 
$$
so that $\textup{v} = \textup{v}^+ + \textup{v}^-$.
When \textup{v} is sufficiently regular, $\nabla \textup{v}$ and  $D^2 \textup{v}$
stand for the gradient and the Hessian of $\textup{v}$, while 
$\nabla_x \textup{v}$ and  $D^2_{xx} \textup{v}$ are the gradient and the Hessian 
of the function $x \mapsto \textup{v} (x, y)$. 
We will say that \textup{v} is \textit{homogeneous} of degree $\mu$ 
if \textup{v} can be written as
$$
\textup{v} (z) = |z|^{\mu} h \left( \frac{z}{|z|} \right), 
$$
for some function $h : \mathbb{S}^{n} \to \R$.
Let $L_0, D_0 \geq 0$.
For a function $f : \mathbb{R}^n \to \mathbb{R}$,
we say that $f$ is Lipschitz continuous, with Lipschitz constant $L_0$, if
$$
\sup_{x_1\neq x_2} \frac{|f (x_2) - f (x_1)|}{|x_2 - x_1|} \leq L_0.
$$
Also, $f$ is said to be \textit{semiconvex}, with semiconvexity constant $D_0$, if
$$
f (x + h) + f (x - h) - 2 f (x) \geq - D_0 |h|^2, 
$$
for every $x,h \in \mathbb{R}^n$.
Similarly, we say that $f$ is \textit{semiconcave}, with semiconcavity constant $D_0$, if
$$
f (x + h) + f (x - h) - 2 f (x) \leq  D_0 |h|^2, 
$$
for every $x,h \in \mathbb{R}^n$.

\medskip

We are now ready to state our assumptions.
In the following, $g \in C^2[0, \infty) \cap C^3 (0, \infty)$ is strictly increasing 
and bounded, with $g(0)=0$ and $g'(0^+) \in (0,+\infty)$.
We assume, in addition, that $2 \| g'' \|_{L^{\infty}} < 1/A$ 
and $\| g''' \|_{L^{\infty}} < \infty$, where $\| g'' \|_{L^{\infty}}$ and $\| g''' \|_{L^{\infty}}$ denote the $L^{\infty}$-norms of $g''$ and $g'''$, respectively.
Moreover, we assume that $u_{\scriptscriptstyle A} : \R^n \to \R$ satisfies \eqref{assumptions uA}, i.e.
$$
u_{\scriptscriptstyle A} 
\in H^{1/2} (\mathbb{R}^n) \cap   C^{2, \beta} (\R^n) \text{ for some } \beta \in (0,1) \quad \text{ and } \quad 
\lim_{|x| \to \infty} u_{\scriptscriptstyle A} (x) = 0.
$$
\begin{remark} \label{DLuA}
The assumptions above imply, in particular,  that 
$u_{\scriptscriptstyle A}$ is Lipschitz continuous with Lipschitz constant $L_A:=\|\nabla u_{\scriptscriptstyle A}\|_{L^\infty}$.
Moreover, denoting by $\lambda_{\text{min}} (x)$ and $\lambda_{\text{max}} (x)$
the smallest and largest eigenvalue of $D^2 u_{\scriptscriptstyle A} (x)$, respectively,  
we have that $u_{\scriptscriptstyle A}$ is semiconvex with semiconvexity constant 
$D_A := \| (\lambda_{\text{min}})^-\|_{L^\infty}$, and is semiconcave with semiconcavity constant
$C_A := \| (\lambda_{\text{max}})^+\|_{L^\infty}$. 
\end{remark}
We will study optimal regularity and free boundary for a function $u \in H^1 (\R^n \times (0,A))$
solving equation \eqref{defcritptn}:
$$
\begin{cases}
\Delta u = 0  \, &\textnormal{ in } \mathbb{R}^n \times (0,A), \\
u =  u_{\scriptscriptstyle A} \, &\textnormal{ on } \{ y =  A \}, \\
|\partial_{y} u | \leq g' (0^+) \, 
&\textnormal{ on } \{ y = 0 \}, \\
\partial_{y} u = g' (2|u|) \, \textnormal{sgn} (u) 
\, &\textnormal{ on }\{ y = 0 \} \cap \{ u \neq 0 \}.
\end{cases}
$$
Note that the equation above implies that 
\begin{equation} \label{normalderiv}
- g' (2|u (x,0)|) \leq \partial_{y} u (x,0) \leq g' (2|u(x,0)|) \quad \textnormal{ for every } x \in \mathbb{R}^n. 
\end{equation}
Also, by the maximum principle,
$$
\|u\|_{L^\infty}\leq \|u_{\scriptscriptstyle A}\|_{L^\infty}<\infty.
$$
In the next section we prove some basic regularity properties of $u$.

\end{section}

\begin{section}{Basic properties of the solution}

We study in this section the basic regularity properties 
of a solution $u$ of equation \eqref{defcritptn}. 
We start by showing that condition $2 \| g'' \|_{L^{\infty}} < 1/A$ 
implies uniqueness.
\begin{lemma}
\label{lem:min}
Let $u_{\scriptscriptstyle A}$ satisfy \eqref{assumptions uA}, and let 
$g \in C^2[0, \infty)$ be strictly increasing and bounded,
with $g(0)=0$ and $g'(0^+) \in (0,+\infty)$.
If $2 \| g'' \|_{L^{\infty}} < 1/A$, 
then there exists a unique $u \in H^1 (\R^n \times (0,A))$ 
solving~\eqref{defcritptn}.
In particular, there is a unique critical point of \eqref{energian} that coincides with the global minimizer.
\end{lemma}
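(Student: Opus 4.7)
The plan is a direct uniqueness argument: for any two solutions $u_1, u_2 \in H^1(\R^n\times(0,A))$, I would show that $w := u_1 - u_2 \equiv 0$ by combining harmonicity in the strip with a monotonicity property of the boundary condition. Since $\Delta w = 0$ in $\R^n\times(0,A)$ and $w = 0$ on $\{y=A\}$, integration by parts (justified by $u_i\in H^1$ and the decay coming from \eqref{assumptions uA}) yields
\[
\int_{\R^n\times(0,A)} |\nabla w|^2\,dz \;=\; -\int_{\R^n} w(x,0)\bigl(\partial_y u_1(x,0) - \partial_y u_2(x,0)\bigr)\,dx,
\]
so everything reduces to estimating the right-hand side in terms of $\|w(\cdot,0)\|_{L^2}^2$ with a small constant.

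To accomplish this, I would split the boundary nonlinearity into a convex part with a corner and a smooth concave correction. Concretely, set $K(s) := g'(0^+)|s|$ (convex, with classical subdifferential $\partial K(0) = [-g'(0^+),g'(0^+)]$ and $\partial K(s) = \{g'(0^+)\operatorname{sgn}(s)\}$ for $s\neq 0$), and $L(s) := g'(0^+)|s| - \tfrac{1}{2}g(2|s|)$. A short computation using $g(0)=0$, $g'(0^+)>0$ and $g''\le 0$ shows that $L$ is $C^1$ with $L'(0)=0$, convex, and $L'$ is Lipschitz of constant $2\|g''\|_{L^{\infty}}$. The last three lines of \eqref{defcritptn} can then be recast as $\partial_y u + L'(u) \in \partial K(u)$. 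Monotonicity of $\partial K$ combined with the Lipschitz bound on $L'$ gives, for all admissible pairs $(s_i,\eta_i)$,
\[
-(s_1 - s_2)(\eta_1 - \eta_2) \;\le\; 2\|g''\|_{L^{\infty}}(s_1 - s_2)^2.
\]
Applied pointwise on $\{y=0\}$ with $s_i = u_i(x,0)$ and $\eta_i = \partial_y u_i(x,0)$, together with the one-dimensional Poincar\'e inequality $|w(x,0)|^2 \le A\int_0^A |\partial_y w(x,y)|^2\,dy$ (which uses $w(x,A)=0$), the identity above becomes
\[
\int |\nabla w|^2 \;\le\; 2\|g''\|_{L^{\infty}}\,\|w(\cdot,0)\|_{L^2(\R^n)}^2 \;\le\; 2A\|g''\|_{L^{\infty}}\int |\nabla w|^2,
\]
and the hypothesis $2A\|g''\|_{L^{\infty}}<1$ forces $\nabla w\equiv 0$, hence $w\equiv 0$.

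For the final assertion, any critical point of \eqref{energian} satisfies the Euler--Lagrange system recalled in the introduction, which in the odd-symmetric class reduces to a solution of \eqref{defcritptn}; by the argument above, at most one such critical point exists. Existence of a global minimizer follows by the direct method---the Dirichlet part is weakly lower semicontinuous, and the boundary integral passes to the limit using trace continuity with $g$ bounded, continuous, and vanishing at $0$---and since every minimizer is a critical point, it must coincide with the unique solution. The main subtlety I anticipate is handling cleanly the corner of $H(s):=\tfrac{1}{2}g(2|s|)$ at $s=0$, which is precisely what the $K/L$ splitting is designed to absorb; once this is in place, the argument becomes a sharp interplay between the Poincar\'e constant $1/A$ and the assumption $2\|g''\|_{L^{\infty}}<1/A$, which is presumably why the hypothesis takes this specific form.
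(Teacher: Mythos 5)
Your proof is correct and arrives at the same pivotal estimate as the paper, namely
\[
\|\nabla (u_1-u_2)\|_{L^2(\R^n\times(0,A))}^2 \;\leq\; 2\|g''\|_{L^{\infty}}\,\|u_1(\cdot,0)-u_2(\cdot,0)\|_{L^2(\R^n)}^2,
\]
combined with the one-dimensional Poincar\'e inequality $\|w(\cdot,0)\|_{L^2}^2 \leq A\|\nabla w\|_{L^2}^2$ and the hypothesis $2A\|g''\|_{L^\infty}<1$. However, the way you obtain the key estimate is genuinely different. The paper starts from the variational-inequality (weak) form of \eqref{defcritptn}, tests it with $\pm(u_1-u_2)$, adds the two inequalities, and then carries out a case-by-case analysis on the signs of $u_1(x,0)$ and $u_2(x,0)$ (the regions $u_1u_2<0$, $u_1u_2>0$, and one of them zero) to bound each contribution by $2\|g''\|_\infty|u_1-u_2|^2$. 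You instead exploit the convex-plus-Lipschitz structure of the boundary operator directly: writing $\partial_y u + L'(u) \in \partial K(u)$ with $K(s)=g'(0^+)|s|$ convex (the corner) and $L(s)=g'(0^+)|s|-\tfrac12 g(2|s|)$ a $C^{1,1}$ convex function with $\operatorname{Lip}(L')=2\|g''\|_{L^\infty}$, the monotonicity of $\partial K$ and the Lipschitz bound on $L'$ immediately give the pointwise inequality $-(s_1-s_2)(\eta_1-\eta_2)\leq 2\|g''\|_\infty(s_1-s_2)^2$, with no sign casework. This is structurally cleaner and makes it transparent exactly why the constant $2\|g''\|_{L^\infty}$ appears; the paper's route, by working from the variational inequality, avoids having to justify the pointwise integration by parts $\int|\nabla w|^2=-\int_{\R^n} w(x,0)\,\partial_y w(x,0)\,dx$ separately. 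On that last point, the justification you need is simply that $|\partial_y w(\cdot,0)|\leq 2g'(0^+)$ and $w(\cdot,0)\in L^2(\R^n)$ (which follows from the Poincar\'e inequality itself), rather than anything to do with the decay hypothesis \eqref{assumptions uA}. Finally, note that the paper's proof establishes only uniqueness and takes existence as known from prior work; your sketch of existence by the direct method is a reasonable supplement, though as written it glosses over the same point the paper does (integrability and passage to the limit of the unbounded-domain boundary term $\int_{\R^n} g(|[\,\cdot\,]|)\,dx$).
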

\begin{proof}
Suppose, by contradiction, that there exist $u_1, u_2 \in H^1 (\R^n \times (0,A))$
solutions of \eqref{defcritptn}, with $u_1 \not \equiv u_2$.
In particular, since $u_1 = u_2$ on $\{ y= A \}$, this implies
\begin{equation} \label{l2normgradient}
\| \nabla (u_1 - u_2) \|^2_{L^2 (\R^n \times (0, A))} > 0.
\end{equation}
We will prove the statement into two steps.

\vspace{.2cm}

\textbf{Step 1:} We show that
\[
\| \nabla (u_1 - u_2) \|^2_{L^2 (\R^n \times (0, A))}
 \leq 2 \| g'' \|_{L^{\infty}} \| u_1 - u_2 \|^2_{L^2(\R^n)}.
\]
Using the weak formulation of the equation (see \cite[Proposition 3.1]{C}) we have
\begin{equation} \label{weak solution}
\int_{\R^n \times (0, A)} \nabla u_1 \cdot \nabla \psi \, dz +
\int_{\R^n} \Big( \psi \, g' (2 |u_1|) \, \textnormal{sgn} (u_1)
\, 1_{\{ u_1 \neq 0 \}} + g' (0^+) | \psi | 1_{\{ u_1 = 0 \}} \Big) \, d x \geq 0,
\end{equation}
for every $\psi \in H^1 (\R^n \times (0, A))$ with $\psi = 0$ on $\{ y =  A \}$.
Choosing $u_2 - u_1$ as test function in \eqref{weak solution} we obtain
\begin{align*}
&\int_{\R^n \times (0, A)} \nabla u_1 \cdot \nabla (u_2 - u_1) \, dz \\
&\hspace{.4cm}+\int_{\R^n} \Big( (u_2 - u_1) \, g' (2 | u_1 |) \, \textnormal{sgn} (u_1)
\, 1_{\{ u_1 \neq 0 \}} + g' (0^+) |u_2 - u_1| 1_{\{ u_1 = 0 \}} \Big) \, d x \geq 0.
\end{align*}
Analogously, using the weak formulation of the equation for $u_2$, with test function $u_1 - u_2$, we get
\begin{align*}
&\int_{\R^n \times (0, A)} \nabla u_2 \cdot \nabla (u_1 - u_2) \, dz \\
&\hspace{.4cm}+\int_{\R^n} \Big( (u_1 - u_2) \, g' (2 | u_2 |) \, \textnormal{sgn} (u_2)
\, 1_{\{ u_2 \neq 0 \}} + g' (0^+) |(u_1 - u_2)| 1_{\{ u_2 = 0 \}} \Big) \, d x \geq 0.
\end{align*}
Adding together the last two relations, we obtain
\begin{align*}
&\| \nabla (u_1 - u_2) \|^2_{L^2 (\R^n \times (0, A))} \\
&\hspace{.4cm}\leq \int_{\R^n} \Big( (u_2 - u_1) \, g' (2 | u_1 |) \, \textnormal{sgn} (u_1)
\, 1_{\{ u_1 \neq 0 \}} + g' (0^+) |u_2 - u_1| 1_{\{ u_1 = 0 \}} \Big) \, d x \\
&\hspace{.4cm}+ \int_{\R^n} \Big( (u_1 - u_2) \, g' (2 | u_2 |) \, \textnormal{sgn} (u_2)
\, 1_{\{ u_2 \neq 0 \}} + g' (0^+) |(u_1 - u_2)| 1_{\{ u_2 = 0 \}} \Big) \, d x \\
&\hspace{.4cm}=\int_{\R^n} 
(u_2 - u_1) \, \Big(  g' (2 |u_1|) \, \textnormal{sgn}(u_1) 
- g' (2 |u_2|) \, \textnormal{sgn}(u_2) \Big)
\, 1_{\{ u_1 u_2 \neq 0 \}} \, d x \\
&\hspace{.4cm}+ \int_{\R^n} 
| u_2 | \Big( g' (0^+) -  g' (2 | u_2 |) \Big)       
\, 1_{\{ u_1 = 0 \} \cap \{ u_2 \neq 0 \}}  \, d x \\
&\hspace{.4cm}+ \int_{\R^n} 
| u_1 | \Big(  g' (0^+) -  g' ( 2 | u_1| ) \Big)       
\, 1_{\{ u_1 \neq 0 \} \cap \{ u_2 = 0 \}}  \, d x.
\end{align*}
We observe now that 
$$
(u_2 - u_1) \, \Big(  g' (2 |u_1|) \, \textnormal{sgn} (u_1) 
- g' (2 |u_2|) \, \textnormal{sgn}(u_2) \Big) < 0 \quad \text{ whenever } u_1 u_2 <0,
$$
therefore
\begin{align*}
&\| \nabla (u_1 - u_2) \|^2_{L^2 (\R^n \times (0, A))} \\
&\hspace{.4cm}\leq  \int_{\R^n} | u_2 - u_1  | \, |  \, g' (2 |u_1|) 
- g' (2 | u_2 |) \, |
\, 1_{ \{ u_1 u_2 > 0 \} } \, d x \\
&\hspace{.4cm}+ \int_{\R^n} 
| u_2 | \Big( g' (0^+) -  g' (2 | u_2 |) \Big)       
\, 1_{\{ u_1 = 0 \} \cap \{ u_2 \neq 0 \}}  \, d x \\
&\hspace{.4cm}+ \int_{\R^n} 
| u_1 | \Big(  g' (0^+) -  g' (2 |u_1 |) \Big)       
\, 1_{\{ u_1 \neq 0 \} \cap \{ u_2 = 0 \}}  \, d x \\
&\hspace{.4cm}\leq 2 \int_{\R^n} 
\| g'' \|_{L^{\infty}} | u_1 - u_2|^2 
1_{ \{ u_1 u_2  >0 \} } \, d x + 2 \int_{\R^n} 
\| g'' \|_{L^{\infty}} | u_1 - u_2|^2 
1_{\{ u_1 = 0 \} \cap \{ u_2 \neq 0 \}} \, d x \\
&\hspace{1 cm}+2 \int_{\R^n} 
\| g'' \|_{L^{\infty}} | u_1 - u_2 |^2 
1_{\{ u_1 \neq 0 \} \cap \{ u_2 = 0 \}} \, d x \\
&\hspace{.5 cm} \leq 2 \| g'' \|_{L^{\infty}} \|  u_1 - u_2 \|^2_{L^2(\R^n)},
\end{align*}
where we also used the fact that
$| u_2  - u_1  |= |\, | u_2 | - | u_1 | \,|$ whenever $ u_1 u_2 >0$. 

\vspace{.2cm}

\textbf{Step 2:} We conclude.

First of all, note that
\[
u_{\scriptscriptstyle A} (x) = u_i (x, A) = u_i  (x, 0) + \int_0^A \partial_y u_i (x, t) \, dt \qquad \text{ for every }x \in \R^n \text{ and }i = 1, 2.
\]
Therefore, for every $x \in \R^n$, 
\begin{align*}
u_2 (x, 0) - u_1 (x, 0) = \int_0^A \partial_y (u_1 - u_2) (x, t) \, dt
\leq A^{1/2} \left( \int_0^A | \nabla (u_1 - u_2) (x, t) |^2 \, dt \right)^{1/2}, 
\end{align*}
so that
\[
\| u_1 - u_2 \|^2_{L^2(\R^n)} \leq A \| \nabla (u_1 - u_2) \|^2_{L^2 (\R^n \times (0, A))}.
\]
Then, thanks to Step 1
\begin{align*}
\| \nabla (u_1 - u_2) \|^2_{L^2 (\R^n \times (0, A))}
&\leq 2 A \| g'' \|_{L^{\infty}} \| \nabla (u_1 - u_2) \|^2_{L^2 (\R^n \times (0, A))}.
\end{align*}
Since $2 A \| g'' \|_{L^{\infty}} < 1$, this implies 
\[
\| \nabla (u_1 - u_2) \|^2_{L^2 (\R^n \times (0, A))} = 0, 
\]
against \eqref{l2normgradient}.

\end{proof}
We now show that $x \longmapsto u (x, 0)$ is infinitesimal as $|x| \to \infty$.
\begin{lemma} \label{infinitesimal}
Let $u_{\scriptscriptstyle A}$ and $g$ be as in Theorem~\ref{opt_reg_theorem}, 
and let $u \in H^1 (\R^n \times (0,A))$ be a solution of \eqref{defcritptn}.
Then, 
$$
\lim_{|x| \to \infty} u (x,0) = 0.
$$
\end{lemma}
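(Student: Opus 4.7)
The plan is to argue by contradiction: assume there exist $\varepsilon>0$ and a sequence $|x_k|\to\infty$ with $|u(x_k,0)|\ge\varepsilon$, and derive a contradiction from (a) $L^2$-integrability of $u(\cdot,0)$, combined with (b) a uniform H\"older estimate up to $\{y=0\}$.

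For (a), since $u\in H^1(\R^n\times(0,A))$, the trace theorem yields $u(\cdot,0)\in H^{1/2}(\R^n)\hookrightarrow L^2(\R^n)$; in particular $u(\cdot,0)\in L^2(\R^n)$.

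For (b), I would exploit the pointwise bound $|\partial_y u(\cdot,0)|\le g'(0^+)$ together with the harmonicity of $u$ in the slab. Concretely, define the even extension $\tilde u(x,y):=u(x,|y|)$ on $\R^n\times(-A,A)$. Integration by parts across $\{y=0\}$ yields
\[
\Delta\tilde u=2\,\partial_y u(\cdot,0)\,\delta_{\{y=0\}}
\]
in the sense of distributions, with density bounded by $2g'(0^+)$. Writing $\tilde u$ locally as the sum of a harmonic function and the Newton potential of $2\,\partial_y u(\cdot,0)\,\delta_{\{y=0\}}$, and invoking the classical fact that the Newton potential of a surface measure carried by a smooth hyperplane with bounded $L^\infty$ density is of class $C^{0,\alpha}_{\rm loc}$ for every $\alpha<1$, I would derive a local $C^{0,\alpha}$ bound for $\tilde u$ with constants depending only on $\|u_{\scriptscriptstyle A}\|_{L^\infty}$, $g'(0^+)$, $n$, and $A$. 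Translation invariance of \eqref{defcritptn} in the $x$ variable makes this estimate uniform in the base point, so $u(\cdot,0)$ is uniformly H\"older on $\R^n$.

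Combining (a) and (b): after extracting a subsequence with $|x_k-x_\ell|\to\infty$ for $k\ne\ell$, the uniform H\"older bound forces $|u(\cdot,0)|\ge\varepsilon/2$ on disjoint balls $B^n_r(x_k)$ of a fixed radius $r=r(\varepsilon)>0$, yielding $\|u(\cdot,0)\|^2_{L^2(\R^n)}=\infty$ and contradicting (a). The main obstacle is step (b): since Lemma~\ref{Lip2} (which would make this immediate) is proved only later, I have to extract the up-to-the-boundary regularity directly from the even reflection and a potential-theoretic estimate for bounded surface measures.
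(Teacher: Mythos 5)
Your proposal is correct but follows a genuinely different route from the paper. The paper argues by compactness: it assumes $u(x_k,0)\to a\ne 0$ along $|x_k|\to\infty$, translates ($u_k(x,y):=u(x+x_k,y)$), extracts a subsequential limit $\overline u$ using uniform boundedness and harmonicity, and then uses the integration-by-parts identity $0=\int u_k\Delta u_k\,dz$ together with the \emph{sign structure} of the boundary condition (namely $u_k(x,0)\,\partial_y u_k(x,0)=|u_k(x,0)|\,g'(2|u_k(x,0)|)\ge 0$) and $u_{\scriptscriptstyle A}(x_k+\cdot)\to 0$ to force $\int|\nabla\overline u|^2=0$, hence $\overline u\equiv 0$, contradicting $\overline u(0,0)=a$. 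Your argument instead uses only (a) the global $H^1$ bound (yielding $u(\cdot,0)\in L^2(\R^n)$, which indeed follows directly from $u\in H^1(\R^n\times(0,A))$ by a one-line slicing estimate, without appealing to an abstract trace theorem on the unbounded slab) and (b) the pointwise bound $|\partial_y u(\cdot,0)|\le g'(0^+)$ together with harmonicity, which gives a uniform modulus of continuity for $u(\cdot,0)$; combining the two immediately forces decay. Both approaches are sound. Yours is in some ways more robust, as it never uses the sign of $u\,\partial_y u$ on $\{y=0\}$—only the $L^\infty$ bound on the Neumann data—whereas the paper's argument leans on that sign to kill the boundary term in the energy identity. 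One small remark: the potential-theoretic step is more machinery than you need. Since $|\partial_y u|\le g'(0^+)+C_1 y$ in the slab (by the maximum principle applied to $\partial_y u$, as the paper later observes in \eqref{bound partial_y}), and $|\nabla_x u(x,y)|\le C\|u\|_{L^\infty}/y$ by interior gradient estimates, the standard split
\[
|u(x_1,0)-u(x_2,0)|\le |u(x_1,0)-u(x_1,y)|+|u(x_1,y)-u(x_2,y)|+|u(x_2,y)-u(x_2,0)|
\]
with $y\sim |x_1-x_2|^{1/2}$ gives a uniform $C^{0,1/2}$ bound on $u(\cdot,0)$ with no reflection or single-layer-potential estimates needed. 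That said, your even-reflection/Newton-potential argument is correct: the single layer potential of an $L^\infty$ density on a hyperplane does have modulus of continuity $t\log(1/t)$, hence is $C^{0,\alpha}$ for every $\alpha<1$, and the local decomposition $\tilde u=h+N$ with $h$ harmonic is uniformly controlled because $\tilde u$ is bounded by $\|u_{\scriptscriptstyle A}\|_{L^\infty}$ via the maximum principle.
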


\begin{proof}
Suppose, by contradiction, that there exists a sequence 
$\{ x_k \}_{k \in \mathbb{N}} \subset \mathbb{R}^n$ such that 
$|x_k| \to \infty$ and
$$
\lim_{k \to \infty} u (x_k,0) = a \neq 0.
$$
Define now, for every $k \in \mathbb{N}$, the function $u_k: \mathbb{R}^n \times [0,A] \to \mathbb{R}$ as 
$$
u_k (x, y) : = u (x + x_k, y).
$$
Since $u_k$ is harmonic for every $k \in \mathbb{N}$ 
and $\{ u_k \}_{k \in \mathbb{N}}$ is uniformly bounded in $\mathbb{R}^n \times [0,A]$ and $\|u_k\|_{H^1(\R^n\times (0,A))}\leq C$,
up to subsequences we have 
\begin{equation} \label{unif_conv}
u_k \to \overline{u} \quad \text{ uniformly on compact subsets of } \mathbb{R}^n \times [0,A],
\end{equation}
for some harmonic function $\overline{u}: \mathbb{R}^n \times [0,A] \to \mathbb{R}$
such that $\overline{u} (\cdot, A) \equiv 0$ and $\overline{u} (0,0) = a$, with 
$\overline u \in H^1 (\R^n \times (0,A))$.
Since $u_k$ is harmonic for each $k$, we have
\begin{align}
0 &= \int_{\mathbb{R}^n \times (0,A)} {u}_k \Delta u_k \, dz
= \int_{\mathbb{R}^n \times (0,A)} \text{div} ({u}_k \nabla u_k) \, dz
- \int_{\mathbb{R}^n \times (0,A)} |\nabla u_k|^2  \, dz \nonumber \\
&= \int_{\mathbb{R}^n} {u}_A (x+x_k) ( \partial_y u_k) (x, A) \, dx
- \int_{\mathbb{R}^n} {u}_k (x, 0) ( \partial_y u_k) (x, 0) \, dx
- \int_{\mathbb{R}^n \times (0,A)} |\nabla u_k|^2 \, dz \nonumber \\
&= \int_{\mathbb{R}^n} {u}_A (x+x_k) ( \partial_y u_k) (x, A) \, dx
- \int_{\mathbb{R}^n} |{u}_k (x, 0)|\,g'(2|{u}_k (x, 0)|) \, dx
- \int_{\mathbb{R}^n \times (0,A)} |\nabla u_k|^2  \, dz\nonumber\\
&\leq \int_{\mathbb{R}^n} {u}_A (x+x_k) ( \partial_y u_k) (x, A) \, dx
- \int_{\mathbb{R}^n \times (0,A)}|\nabla u_k|^2  \, dz.
 \label{weak}
\end{align}
Letting $k\to \infty,$ since $u_{\scriptscriptstyle A}(x_k+\cdot)\to 0$, we obtain
$$
\int_{\mathbb{R}^n \times (0,A)} | \nabla \overline{u} |^2 \, dz
\leq \liminf_{k\to \infty} \int_{\mathbb{R}^n \times (0,A)}|\nabla u_k|^2  \, dz= 0,
$$
where we also used the fact that $u_k \rightharpoonup \overline{u}$ weakly in $H^1_{loc} (\mathbb{R}^n \times (0,A))$.
Since $\overline u(\cdot,A)\equiv 0$ this implies  $\overline u \equiv 0$, which contradicts the fact that 
 $\overline{u} (0,0) = a \neq 0$.
\end{proof}
We now prove that the crack set $K_u$ defined in \eqref{fracture} is bounded.
\begin{proposition} \label{crackset}
Let $u_{\scriptscriptstyle A}$ and $g$ be as in Theorem~\ref{opt_reg_theorem}, 
and let $u \in H^1 (\R^n \times (0,A))$ be a solution of \eqref{defcritptn}. Then, 
$u (\cdot, 0)$ has compact support.
\end{proposition}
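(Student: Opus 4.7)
The plan is to proceed by contradiction: assume $u(\cdot, 0)$ is not compactly supported, so there exists a sequence $x_k \in \R^n$ with $|x_k| \to \infty$ and $u(x_k, 0) \neq 0$. Up to extracting a subsequence and possibly replacing $u$ with $-u$, we may take $u(x_k, 0) > 0$. By Lemma~\ref{infinitesimal}, $u(x_k, 0) \to 0$. I translate by setting $u_k(x, y) := u(x + x_k, y)$, so that $u_k$ is harmonic on $\R^n \times (0, A)$ with Dirichlet data $u_k(\cdot, A) = u_{\scriptscriptstyle A}(\cdot + x_k) \to 0$ uniformly. Rerunning the energy identity argument from the proof of Lemma~\ref{infinitesimal} now on the full sequence gives $\|\nabla u_k\|_{L^2}^2 \to 0$, and consequently $u_k \to 0$ uniformly on compact subsets of $\R^n \times [0, A]$. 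Interior elliptic regularity upgrades this to $C^\infty_{\mathrm{loc}}$ convergence on the open strip $\R^n \times (0, A)$, and since $u_{\scriptscriptstyle A}(\cdot + x_k) \to 0$ in $C^{2, \beta}$ we also get $C^{2, \beta}$ convergence up to the top $\{y = A\}$.

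The heart of the argument is to compare the Neumann condition on the positivity set with an integral identity coming from the divergence theorem. Fixing $\rho > 0$ and applying the divergence theorem to the harmonic $u_k$ on the cylinder $B^n_\rho(0) \times (0, A)$ yields
\[
\int_{B^n_\rho} \partial_y u_k(x, 0) \, dx = \int_{B^n_\rho} \partial_y u_k(x, A) \, dx + \int_{\partial B^n_\rho \times (0, A)} \nabla_x u_k \cdot \hat n \, d\mathcal{H}^n.
\]
The top integral tends to $0$ by the $C^{2, \beta}$ convergence near $\{y=A\}$. For the lateral integral, split it at height $\epsilon > 0$: the part over $\partial B^n_\rho \times [\epsilon, A]$ tends to zero by $C^\infty_{\mathrm{loc}}$ convergence on the open strip, while the part over $\partial B^n_\rho \times [0, \epsilon]$ is controlled by $O(\epsilon)$ using the uniform gradient bound $|\nabla u_k| \leq C(\|u_{\scriptscriptstyle A}\|_{L^\infty}, g'(0^+))$ that follows from boundedness of the Neumann datum; letting $k \to \infty$ and then $\epsilon \to 0$ forces the whole right-hand side to vanish. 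On the other hand, by H\"older continuity of $u_k(\cdot, 0)$ up to $\{y=0\}$ (again from the bound $|\partial_y u_k| \leq g'(0^+)$) and $u_k(0, 0) > 0$, the set $\{u_k(\cdot, 0) > 0\}$ contains a ball around $0$ of some radius $\rho_k > 0$.

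If $\rho_k$ stays bounded below along a subsequence, fix $\rho$ smaller than this lower bound: then on $B^n_\rho$ we have $\partial_y u_k(\cdot, 0) = g'(2 u_k(\cdot, 0))$, and the uniform convergence $u_k \to 0$ gives
\[
\int_{B^n_\rho} \partial_y u_k(x, 0) \, dx = \int_{B^n_\rho} g'(2 u_k(x, 0)) \, dx \longrightarrow g'(0^+)\,\omega_n \rho^n > 0,
\]
contradicting the previous computation. The remaining case, $\rho_k \to 0$, I expect to be the main obstacle. It is handled by a blow-up/rescaling argument: set $\tilde u_k(\xi, \eta) := u_k(\rho_k \xi, \rho_k \eta)$, which is harmonic, uniformly bounded by $\|u_{\scriptscriptstyle A}\|_{L^\infty}$, and satisfies the rescaled Neumann condition $|\partial_\eta \tilde u_k(\xi, 0)| \leq \rho_k g'(0^+) \to 0$. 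Any subsequential limit is a bounded harmonic function on $\R^n \times (0, \infty)$ with zero Neumann data on $\{\eta = 0\}$, hence a constant by even reflection and Liouville; however, by construction the rescaled positivity set contains $B^n_1$, and an appropriate normalization (matched to the H\"older modulus of continuity at $0$) forces a nontrivial profile at unit scale, reducing us to the first case. The delicate point is precisely this choice of normalization, keeping the trace positive and bounded away from zero on $B^n_1$ while the strip width diverges: this is the technical core, and with it done the contradiction follows.
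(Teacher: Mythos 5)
Your approach is genuinely different from the paper's. The paper establishes, via an explicit barrier $V_b(x,y)=aV(x-x_1,y)+b$ slid down to touch $u$ from above, that every far-out point $x_1$ with $u(x_1,0)\neq 0$ forces a fixed quantum of Dirichlet energy $c>0$ into a unit-size cylinder around $x_1$; since $u\in H^1$, only finitely many disjoint such cylinders can exist, and compactness of the support follows. Your route -- translating, using the divergence theorem on the cylinder $B^n_\rho\times(0,A)$, and comparing a vanishing flux on the far sides with the strictly positive Neumann data $g'(2u_k)\to g'(0^+)$ on the bottom -- is attractive and, in the regime where the positivity region of $u_k(\cdot,0)$ contains a ball of radius bounded away from zero, it does yield a contradiction. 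However, two of the intermediate claims need repair even there: (i) $\|\nabla u_k\|_{L^2(\R^n\times(0,A))}^2$ does \emph{not} tend to zero (translation preserves the full Dirichlet energy, which is a fixed positive number unless $u$ is constant); what is true, and what you actually use, is $\nabla u_k\to 0$ in $L^2_{\mathrm{loc}}$, which follows because the tail $\int_{(\R^n\setminus B_R)\times(0,A)}|\nabla u|^2\to 0$. (ii) The uniform bound $|\nabla u_k|\le C$ up to $\{y=0\}$, needed for the lateral estimate $O(\epsilon)$, does not follow from ``boundedness of the Neumann datum'': a bounded, possibly discontinuous Neumann datum only gives a logarithmic modulus for the tangential gradient. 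This particular bound \emph{is} available from Lemma~\ref{Lip2}, whose proof does not rely on Proposition~\ref{crackset}, but you would need to invoke it. (Also, $u_{\scriptscriptstyle A}(\cdot+x_k)\to 0$ in $C^{2,\beta}_{\mathrm{loc}}$ is not guaranteed by \eqref{assumptions uA}; only uniform convergence to zero is; the vanishing of the top flux should instead be argued via interior/Schauder compactness combined with the $L^2_{\mathrm{loc}}$ convergence.)

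The main gap is the case $\rho_k\to 0$, which you correctly flag as ``the technical core.'' The blow-up $\tilde u_k(\xi,\eta)=u_k(\rho_k\xi,\rho_k\eta)$ with no normalization converges to zero, giving nothing. If one normalizes by $c_k:=\max_{\overline{B^n_1}}u_k(\rho_k\cdot,0)$, then (a) the rescaled function $w_k=\tilde u_k/c_k$ is no longer uniformly bounded (only $|w_k|\le\|u_{\scriptscriptstyle A}\|_{L^\infty}/c_k\to\infty$) so no limit need exist without an a priori Harnack-type growth control, and (b) the rescaled Neumann datum is of size $\rho_k/c_k$, about which we have no information: it can be bounded, go to zero, or diverge, depending on how fast $u_k$ decays near $0$. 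There is no lower bound on $u_k(0,0)$ or on $c_k$ that would keep the profile nontrivial at unit scale, and H\"older continuity only gives the wrong-sided inequality $c_k\lesssim\rho_k^\alpha$. So the reduction to Case 1 is not established and I do not see how to complete it within the present framework. The paper's barrier argument sidesteps this entirely: instead of trying to control the geometry of the positivity set, it uses the touching-point dichotomy to conclude that either the Neumann condition is violated (if the contact is on $\{y=0\}$) or the solution jumps by a fixed amount between $\{y=\overline y\}$ and $\{y=A\}$ (if the contact is lateral), and Harnack then turns that jump into a definite lower bound for $\int|\nabla u|^2$ over a fixed ball. This extracts a non-degenerate quantum of energy with no normalization issues at all, and is the step your proposal is missing.
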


\begin{proof}
We start by showing that there exist positive constants $R = R (g, A)$, 
$c = c(g, A)$, and $r = r (g, A) \in (0,1)$ with the following property:
If $x_1 \in \mathbb{R}^n$ is such that $|x_1| > R$ and $u(x_1,0) \neq 0$, then
\begin{equation} \label{step1}
\exists \, z_1 \in \overline{B_1^n} (x_1) \quad \text{ such that } \qquad \int_{B_r^n (z_1) \times (0,A)} |\nabla u|^2 \, dx \, dy \geq c.
\end{equation}
Before proving the claim, let us show that this implies the conclusion.
Indeed, suppose by contradiction that the support of $u (\cdot, 0)$ is not bounded. 
Then, there exists a sequence $\{ x_k \}_{k \in \mathbb{N}} \subset \mathbb{R}^{n}$ 
with $|x_k| \to \infty$ such that $|x_k| > R$ and $u(x_k,0) \neq 0$ for every $k \in \mathbb{N}$.
By \eqref{step1}, for every $k \in \mathbb{N}$ there exists 
$z_k \in \overline{B_1^n} (x_k)$ such that
$$
\int_{B_r^n (z_k) \times (0,A)} |\nabla u|^2 \, dx \, dy \geq c.
$$
Without loss of generality, we can assume that $|x_j - x_k| \geq 4$ for every $j \neq k$, 
so that the balls $\{ B_r^n (z_k) \}_{k \in \mathbb{N}}$
are pairwise disjoint.
Therefore, 
$$
\int_{\mathbb{R}^{n} \times (0,A)} |\nabla u|^2 \, dx \, dy 
\geq \sum_{k = 1}^{\infty} \int_{B^n_r (z_k) \times (0,A)} |\nabla u|^2 \, dx \, dy 
= \infty,
$$
against the fact that $u \in H^1 (\R^n \times (0, A))$. 

Let us now show the claim. By Lemma \ref{infinitesimal}, 
\begin{equation} \label{tozero}
\lim_{|x| \to \infty} u_{\scriptscriptstyle A} (x) = \lim_{|x| \to \infty} u(x, 0) = 0.
\end{equation}
Let $V : \overline{B_1^n} \times [0,A] \to \mathbb{R}$
be the solution of the following problem:
$$
\begin{cases}
\Delta V = 0  \, &\textnormal{ in } \, B_1^n \times (0,A), \\
V = |x|^2 \, &\textnormal{ on } \, B_1^n \times \{ y = 0 \}, \\
V = 1 \, &\textnormal{ on } B_1^n \times \{ y =  A \}, \\
V = 1 \, &\textnormal{ on } \partial B_1^n \times (0,A),
\end{cases}
$$
and let $a = a (g, A) > 0$ be so small that
\begin{equation} \label{s}
\sup_{|x| \leq \frac{1}{2}} | \partial_y V (x,0)| < \frac{g' (0^+)}{2 a} 
\qquad
\text{ and } \qquad g'(s) > \frac{g' (0^+)}{2} \quad \text{ for } 0 < s < \frac{a}{2}.
\end{equation}
By \eqref{tozero}, there exists a constant $R=R(g, u_{\scriptscriptstyle A}) > 2$ such that
\begin{equation} \label{x1 small}
|u(x,0)| < \frac{a}{4} \quad \text{ and } \quad |u(x,A)| = |u_{\scriptscriptstyle A}(x)| < \frac{a}{4}, \quad 
\text{ for every } x \text{ with } |x| > R - 2.
\end{equation}
Let $x_1 \in \mathbb{R}^n$ be such that $|x_1| > R$ and $u (x_1,0) > 0$
(the case $u (x_1,0) < 0$ can be treated in the same way).
We will show that there exist $z_1 \in \overline{B_1^n} (x_1)$,
$c > 0$, and $r \in (0, 1)$ 
such that \eqref{step1} holds true.  

For every $b > 0$ define $V_b (x,y) := a V (x-x_1,y) + b$
and set
$$
\overline{b}:= \inf \{ b > 0 : V_b > u \text{ in } B^n_1 (x_1) \times (0,A) \}.
$$
Note that we necessarily have $\overline{b} > 0$, since $V_0 (x_1,0) = 0 < u (x_1,0)$.

By maximum principle, there exists 
$(\overline{x}, \overline{y}) \in \partial (B^n_1 (x_1) \times (0,A))$
such that
$$
V_{\overline{b}} (\overline{x}, \overline{y}) = u (\overline{x}, \overline{y}).
$$
By \eqref{x1 small} it follows that $\overline{y} \neq A$, since 
$u(x, A) < a/4 < a+ \overline{b} = V_{\overline{b}} (x, A)$ for every 
$x$ with $|x-x_1| \leq 1$.
We then have only two possibilities.

\medskip

\noindent
\textbf{Case i: $\overline{y}=0$.}
Let us show that this is not possible.
First of all, note that in this case it must be $|\overline{x}-x_1| \leq 1/2$.  
Indeed, for every $x \in B_1^n (x_1)$ with $|x-x_1| > 1/2$
$$
V_{\overline{b}} (x, 0) = a |x-x_1|^2 + \overline{b}
> \frac{a}{4} > u (x, 0),
$$
thanks to \eqref{x1 small}.
Thus, using \eqref{s} and the fact that $u (\overline{x}, 0) = V_{\overline{b}} (\overline{x}, 0) > 0$, 
we have
\begin{align*}
\frac{g' (0^+)}{2} < g' (2 u (\overline{x},0)) = \partial_y u (\overline{x},0)
\leq \partial_y V_{\overline{b}} (\overline{x},0) = a \, \partial_y V (\overline{x} - x_1,0)
< \frac{g' (0^+)}{2},
\end{align*}
which gives a contradiction.

\medskip

\noindent
\textbf{Case ii: $0< \overline{y} < A$ and $|\overline{x} - x_1|=1$.}
Let us show that, for $a$ sufficiently small, there exists a positive constant
$c_1 = c_1 (g, a, A, u_{\scriptscriptstyle A})$ such that
\begin{equation} \label{bar y}
0 < c_1 < \overline{y} < A-c_1 < A.
\end{equation}
From \eqref{defcritptn} and \eqref{assumptions uA} it follows that
$$
| \partial_y u (x, 0)| \leq  g' (0^+) \qquad | \partial_y u (x, A)| \leq C_0, 
\qquad \text{ for every } x \in \mathbb{R}^n,
$$
for some positive constant $C_0 > g' (0^+)$.
Therefore, setting $C_1:= (C_0 - g' (0^+))/A$, by the maximum principle  (note that $\partial_yu$ is harmonic)
\begin{equation} \label{bound partial_y}
- g' (0^+) - C_1 y  \leq \partial_y u (x,y) \leq g' (0^+) + C_1 y \quad
\text{ for every } (x,y) \in \mathbb{R}^n \times [0,A].
\end{equation}
Therefore, 
\begin{align*}
a < a + \overline{b} &= u (\overline{x} , \overline{y} ) = 
u (\overline{x} , 0 ) + \int_0^{\overline{y}} \partial_y u (\overline{x}, y) \, dy \\
&< \frac{a}{4} + \int_0^{\overline{y}} (g' (0^+) + C_1 y) \, dy 
= \frac{a}{4} + g' (0^+) \overline{y} + C_1 \frac{\overline{y}^2}{2}
\end{align*}
where we used \eqref{x1 small}.
The above inequality implies
\begin{equation*}
\overline{y} > \frac{- 2 g' (0^+) + \sqrt{4 g' (0^+)^2 + 6 a C_1}}{2 C_1} >  c_1>0.
\end{equation*}
Analogously, we have
\begin{align*}
a < a + \overline{b} &= u (\overline{x} , \overline{y} ) = 
u (\overline{x} , A ) - \int_{\overline{y}}^{A} \partial_y u (\overline{x}, y) \, dy \\
&\leq \frac{a}{4} + \int_{\overline{y}}^{A} (g' (0^+) + C_1 y) \, dy 
= \frac{a}{4} + g' (0^+) (A - \overline{y}) + C_1 \frac{(A- \overline{y})^2}{2}, 
\end{align*}
which implies $A - \overline{y} > c_1$, so that 
$$
\min\{ \overline{y} , A - \overline{y} \} > c_1,
$$
thus giving \eqref{bar y}.

We can now show \eqref{step1}.
At the contact point, we have
$$
u (\overline{x} , \overline{y} ) = V_{\overline{b}} (\overline{x} , \overline{y} ) = a + \overline{b}.
$$
Then, by Harnack inequality and by \eqref{bar y}, there exists a radius 
$r = r (c_1) \in (0, 1)$ such that
$$
u(x,y) \geq \frac{a}{2} \qquad \text{ for every } (x,y) \in B^n_r (\overline{x}) \times 
(\overline{y} - r , \overline{y} + r).
$$
The inequality above implies 
that for every $x \in B^n_r (\overline{x})$
(note that $|x| > R - 2$ for $x \in B^n_r (\overline{x})$, so we can use \eqref{x1 small})
$$
\frac{a}{4} \leq u(x,\overline{y}) - u(x,A) \leq \int_{\bar y}^A | \partial_y u |( x,y)\,dy 
\leq \sqrt{A} \left( \int_0^A (\partial_y u)^2(x,y)\,dy \right)^{\frac{1}{2}},
$$
from which
$$
\int_0^A |\nabla u|^2(x,y)\,dy \geq \frac{a^2}{16 A} 
\qquad \qquad \forall \, x \in B^n_r (\overline{x}).
$$
Integrating with respect to $x$, we obtain
$$
\int_{B^n_r(\overline{x}) \times (0,A)} |\nabla u|^2 \, dx \, dy
\geq \frac{a^2 \mathcal{H}^n (B^n_r(\overline{x}))}{16 A }.
$$
Setting 
$$
z_1:= \overline{x}, \qquad \text{ and }\qquad c  := \frac{a^2 \mathcal{H}^n (B^n_r(\overline{x}))}{16 A}, 
$$
the claim follows.
\end{proof}
We now show that, under the assumption $2A\| g '' \|_{L^{\infty}}<1,$ the Lipschitz continuity of $u_{\scriptscriptstyle A}$ 
implies the Lipschitz continuity of $u (\cdot, y)$, uniformly with respect to $y$.

\begin{lemma} \label{Lip2}
Let $u_{\scriptscriptstyle A}$ and $g$ be as in Theorem~\ref{opt_reg_theorem}, 
let $u \in H^1 (\R^n \times (0,A))$ be a solution of \eqref{defcritptn}, 
and let $L_A$ be given by Remark~\ref{DLuA}.
Then, for every $y \in [0,A]$ the function $u (\cdot, y)$ is Lipschitz continuous, with Lipschitz constant
$\frac{L_A}{1-2A\| g '' \|_{L^{\infty}}}$.
\end{lemma}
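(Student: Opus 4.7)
Fix $h \in \mathbb{R}^n \setminus \{0\}$ and consider $w(x, y) := u(x+h, y) - u(x, y)$. This is harmonic in the strip $\mathbb{R}^n \times (0, A)$; moreover, by Lemma~\ref{infinitesimal}, by the decay of $u_{\scriptscriptstyle A}$ at infinity, and by the maximum principle applied to $u$ in the strip, both $u$ and its translate tend to $0$ at horizontal infinity uniformly in $y$, so that $w(x, y) \to 0$ as $|x| \to \infty$ uniformly for $y \in [0, A]$. Setting $L := L_A/(1 - 2A\|g''\|_{L^\infty})$, the aim is to prove $w(x, y) \leq L|h|$ throughout the strip; the matching lower bound $w \geq -L|h|$ then follows by applying the same argument with $-h$ in place of $h$ and relabelling.

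Introduce the affine barrier
\[
\Phi(y) := L|h| - (L - L_A)|h| \, \frac{y}{A},
\]
so that $\Phi(0) = L|h|$, $\Phi(A) = L_A|h|$, and $\Phi'(y) = -(L - L_A)|h|/A = -2\|g''\|_{L^\infty} L\, |h|$, and consider the harmonic function $\Psi(x, y) := \Phi(y) - w(x, y)$. On $\{y = A\}$ one has $\Psi = L_A|h| - (u_{\scriptscriptstyle A}(x+h) - u_{\scriptscriptstyle A}(x)) \geq 0$ by the Lipschitz continuity of $u_{\scriptscriptstyle A}$, while $\Psi \to \Phi(y) > 0$ as $|x| \to \infty$. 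If $\Psi$ were strictly negative somewhere, by decay and continuity it would attain a strictly negative infimum at some $(\bar x, 0)$, with $\Psi(\bar x, 0) = -\delta$ for some $\delta > 0$, so that $w(\bar x, 0) = L|h| + \delta$. Applying the maximum principle to the harmonic function $\Psi + \delta(1 - y/A)$, which is nonnegative on the parabolic boundary of the strip, yields the sharp Hopf-type estimate
\[
\partial_y \Psi(\bar x, 0) > \delta/A, \qquad \text{hence} \qquad \partial_y w(\bar x, 0) < -2\|g''\|_{L^\infty} L|h| - \delta/A.
\]

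Setting $a := u(\bar x, 0)$ and $b := u(\bar x+h, 0) = a + L|h| + \delta$, I close the argument by a case analysis based on the Signorini condition in \eqref{defcritptn}. If $a, b > 0$, then $\partial_y u(\bar x, 0) = g'(2a)$ and $\partial_y u(\bar x+h, 0) = g'(2b)$, and since $g'' \leq 0$ the mean value theorem gives
\[
\partial_y w(\bar x, 0) = g'(2b) - g'(2a) \geq -2\|g''\|_{L^\infty}(b - a) = -2\|g''\|_{L^\infty}(L|h| + \delta),
\]
and combined with the upper bound above this yields $2A\|g''\|_{L^\infty} > 1$, contradicting the hypothesis. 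If $a < 0 < b$, then $\partial_y w(\bar x, 0) = g'(2b) + g'(-2a) \geq 2g'(0^+) - 2\|g''\|_{L^\infty}(b - a)$, and combining with the upper bound forces $2g'(0^+) < \delta(2\|g''\|_{L^\infty} - 1/A) < 0$, again a contradiction. The remaining configurations (one of $a,b$ vanishing, or both strictly negative) are handled in the same spirit using the universal bound $|\partial_y u(\cdot, 0)| \leq g'(0^+)$ on the Signorini region together with the concavity of $g$.

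The delicate step is the sharp quantitative Hopf estimate $\partial_y \Psi(\bar x, 0) > \delta/A$ (with the specific constant $1/A$, not a generic dimensional constant), obtained by comparison with the explicit linear harmonic profile $\delta(1 - y/A)$. This precise constant exactly balances the coefficient $2\|g''\|_{L^\infty}$ arising from the Signorini condition, so that the case analysis closes in every sign configuration precisely under the smallness assumption $2A\|g''\|_{L^\infty} < 1$.
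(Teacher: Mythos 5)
Your proof is correct and takes essentially the same approach as the paper's: a barrier linear in $y$ (with slope $L_A/(1-2A\|g''\|_{L^\infty})$ at $y=0$ and $L_A$ at $y=A$), a contact point on $\{y=0\}$ produced by the maximum principle and the decay of $u$ at infinity, and a sign-based case analysis at that point using the boundary condition together with $|g''|\leq\|g''\|_{L^\infty}$. The only cosmetic difference is that the paper introduces an extra parameter $\alpha$, slides a family of barriers to bound the optimal coefficient $C_h^\alpha$ by $L_A$, and then lets $\alpha$ decrease to the critical value, whereas you fix the target constant directly and extract the quantitative margin $\delta/A$ from the strict Hopf inequality at the contact point.
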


\begin{proof}
Let $h \in \mathbb{R}^n \setminus \{ 0 \}, \alpha > 0$, and define for every $C > 0$
$$
u^{h, \alpha}_C (x,y) : = u (x+h,y) + C |h| \left[ 1 + \alpha \left(1 - \frac{y}{A} \right) \right], \qquad (x,y) \in \R^n \times [0,A].
$$
Setting $C^{\alpha}_h: = \inf \left\{ C > 0 : u^{h, \alpha}_C > u \right\}$, 
we claim that 
\begin{equation} \label{C12}
C^{\alpha}_h \leq L_A, 
\qquad \text{ for }  \alpha  >\frac{2A\| g '' \|_{L^{\infty}}}{1- 2A\| g '' \|_{L^{\infty}}}.
\end{equation}
Let us first show that the claim proves the lemma.
Indeed, if \eqref{C12} is true then 
for every $(x,y) \in \mathbb{R}^n \times [0,A]$ we have 
\begin{align*}
u (x+h,y) + L_A(1+\alpha) |h|
&\geq u (x+h,y) + L_A |h| \left[ 1 + \alpha \left(1 - \frac{y}{A} \right) \right] \\ 
&\geq u (x+h,y) + C^{\alpha}_h |h| \left[ 1 + \alpha \left(1 - \frac{y}{A} \right) \right] \geq u (x, y).
\end{align*}
Since $x,y$ and $h$ are arbitrary, from the last inequality and letting $\alpha  \to \frac{2A\| g '' \|_{L^{\infty}}}{1- 2A\| g '' \|_{L^{\infty}}}$, we get
$$
|u (x+h,y) - u (x,y)| \leq \frac{L_A}{1-2A\| g '' \|_{L^{\infty}}}  |h| ,
$$
thus concluding.

Let us now prove the claim.
By maximum principle and thanks to \eqref{tozero}, there exists 
$(\overline{x}, \overline{y}) \in \R^n \times \{ 0, A\}$ such that
\begin{equation*} 
0 = u^{h, \alpha}_{C^{\alpha}_h} (\overline{x}, \overline{y}) - u (\overline{x}, \overline{y})
= \inf_{\R^n \times [0,A]} ( u^{h, \alpha}_{C^{\alpha}_h} - u ).
\end{equation*}
In the following we assume $C^{\alpha}_h > 0$, since otherwise \eqref{C12} is trivially satisfied. 
We have two possibilities.

\medskip

\noindent
\textbf{Case 1: $\overline{y} = A$.} 
Since $u_{\scriptscriptstyle A} (\cdot)$ is Lipschitz continuous, at the contact point $(\overline{x}, A)$ we have
$$
- L_A |h| \leq u_{\scriptscriptstyle A} (\overline{x}+h) - u_{\scriptscriptstyle A} (\overline{x}) = - C^{\alpha}_h |h| ,
$$
from which \eqref{C12} follows. 

\medskip

\noindent
\textbf{Case 2: $\overline{y} =0$.} We conclude the proof of the lemma, 
showing that for $\alpha$ sufficiently large this case is impossible.
At the contact point, the following equality holds true:
\begin{equation} \label{contact}
u (\overline{x}+h,0) + (1+ \alpha) C^{\alpha}_h |h| = u^{h, \alpha}_{C^{\alpha}_h} (\overline{x},0) = u (\overline{x},0).
\end{equation}
We consider now three possible subcases, in which we will always reach a contradiction.

\medskip

\noindent
\textbf{Case 2a: $\overline{y} =0$ and $u (\overline{x},0) \leq 0$.}
Thanks to \eqref{contact}, it has to be $u (\overline{x}+h,0) \leq - (1 + \alpha) C^{\alpha}_h |h| < 0$.
Therefore, recalling \eqref{normalderiv} we get
\begin{align*}
& - g' (- 2  u (\overline{x},0 ) ) \leq \partial_y u (\overline{x},0 ) \leq \partial_y u^{h, \alpha}_{C^{\alpha}_h} (\overline{x},0) \\
& = \partial_y u (\overline{x} + h,0) - \frac{\alpha C^{\alpha}_h |h|}{A} \\
&= - g' ( - 2 u (\overline{x} + h,0) ) - \frac{\alpha C^{\alpha}_h |h|}{A} \\
&= - g' \big(- 2  u (\overline{x},0 ) + 2 (1+ \alpha) C^{\alpha}_h |h| \big) - \frac{\alpha C^{\alpha}_h |h|}{A} \\
&\leq - g' (- 2  u (\overline{x},0 ) )  + C^{\alpha}_h |h| 
\left( 2 (1+ \alpha) \| g '' \|_{L^{\infty}} - \frac{\alpha}{A} \right) \\
&= - g' (- 2  u (\overline{x},0 ) )  + C^{\alpha}_h |h| 
\left[ 2\| g '' \|_{L^{\infty}} + \alpha \left( 2\| g '' \|_{L^{\infty}} - \frac{1}{A} \right) \right] \\
& <  - g' (- 2  u (\overline{x},0 ) ),
\end{align*}
for $\alpha>\frac{2A\| g '' \|_{L^{\infty}}}{1- 2A\| g '' \|_{L^{\infty}}}$.

\medskip

\noindent
\textbf{Case 2b: $\overline{y} =0$ with $u (\overline{x},0) > 0$ and $u (\overline{x}+h,0) < 0$.}
In this case we have
\begin{align*}
& 0 < g' (2  u (\overline{x},0 ) ) 
= \partial_y u (\overline{x},0 ) \leq \partial_y u^{h, \alpha}_{C^{\alpha}_h} (\overline{x},0) 
 = \partial_y u (\overline{x} + h,0) - \frac{\alpha C^{\alpha}_h |h|}{A} \\
&=  - g' ( - 2 u (\overline{x} + h,0) ) - \frac{\alpha C^{\alpha}_h |h|}{A} < 0,
\end{align*}
which is still impossible.

\medskip

\noindent
\textbf{Case 2c: $\overline{y} =0$ with $u (\overline{x},0) > 0$ and $u (\overline{x}+h,0) \geq 0$.}
This follows as in case 2a:
\begin{align*}
& g' (2  u (\overline{x},0 ) ) = \partial_y u (\overline{x},0 ) 
\leq \partial_y u^{h, \alpha}_{C^{\alpha}_h} (\overline{x},0) \\
& = \partial_y u (\overline{x} + h,0) - \frac{\alpha C^{\alpha}_h |h|}{A} 
\leq g' ( 2 u (\overline{x} + h,0) ) - \frac{\alpha C^{\alpha}_h |h|}{A} \\
&= g' \big(2  u (\overline{x},0 ) - 2 (1+ \alpha) C^{\alpha}_h |h| \big) - \frac{\alpha C^{\alpha}_h |h|}{A} \\
&\leq g' (2  u (\overline{x},0 ) )  + C^{\alpha}_h |h| 
\left( 2 (1+ \alpha) \| g '' \|_{L^{\infty}} - \frac{\alpha}{A} \right) \\
&= g' (2  u (\overline{x},0 ) )  + C^{\alpha}_h |h| 
\left[ 2\| g '' \|_{L^{\infty}} + \alpha \left( 2\| g '' \|_{L^{\infty}} - \frac{1}{A} \right) \right] \\
& <  g' (2  u (\overline{x},0 ) ),
\end{align*}
for $\alpha>\frac{2A\| g '' \|_{L^{\infty}}}{1- 2A\| g '' \|_{L^{\infty}}}$.
This proves the claim and, in turn, the lemma.
\end{proof}
We now show a property that implies the semiconvexity of $u^+ (\cdot, y)$, for any $y \in [0,A]$.
\begin{lemma} \label{semiconc2}
Let $u_{\scriptscriptstyle A}$ and $g$ be as in Theorem~\ref{opt_reg_theorem}, 
and let $u \in H^1 (\R^n \times (0,A))$ be a solution of \eqref{defcritptn}.
Then, there exists $\overline{D} > 0$ such that for every $(x, y) \in \R^n \times [0,A]$,
$$
\left[u(x+h,y)  + u(x-h,y) + \overline{D} |h|^2  \right]^+ \geq 2 u^+ (x, y) \qquad 
\forall\, h \in \R^n.
$$
In particular, for every $y \in [0,A]$ the function $u^+ (\cdot, y)$ is semiconvex, 
with semiconvexity constant~$\overline{D}$.
\end{lemma}
\noindent
An analogous result holds true for $u^-$.
\begin{lemma} \label{semiconc3}
Let $u_{\scriptscriptstyle A}$ and $g$ be as in Theorem~\ref{opt_reg_theorem}, 
and let $u \in H^1 (\R^n \times (0,A))$ be a solution of \eqref{defcritptn}. 
Then, there exists $\overline{C} > 0$ such that for every $(x, y) \in \R^n \times [0,A]$, 
$$
\left[u(x+h,y)  + u(x-h,y) - \overline{C} |h|^2  \right]^- \leq 2 u^- (x, y) \qquad 
\forall\, h \in \R^n.
$$
In particular, for every $y \in [0,A]$ the function $u^- (\cdot, y)$ is semiconcave, 
with semiconcavity constant~$\overline{C}$.
\end{lemma}
The following remark will be useful in the proof of Proposition~\ref{phaseseparation}.
\begin{remark} \label{useful phases sep}
Combining Lemmata~\ref{semiconc2} and~\ref{semiconc3}, we obtain
\begin{multline*}
\left[u(x+h,y)  + u(x-h,y) + \overline{D} |h|^2  \right]^+ \geq 2 u^+ (x, y)
\geq 2 u (x, y) \\
 \geq 2 u^- (x, y) 
\geq \left[u(x+h,y)  + u(x-h,y) - \overline{C} |h|^2  \right]^- ,
\end{multline*}
for every $(x, y) \in \R^n \times [0,A]$, and $h \in \R^n$.
\end{remark}

\begin{remark} \label{def constants}
Let $L_A$, $D_A$ and $C_A$ be given by Remark~\ref{DLuA}.
A careful inspection of the proof of Lemma~\ref{semiconc2} shows that 
one can choose
\[
\overline{D} = \frac{1}{c_A} 
\left[ D_A + \frac{4 B_{A,g}}{c_A^2} \right]
\quad \text{ and } \quad \overline{C} = \frac{1}{c_A} 
\left[ C_A + \frac{4 B_{A,g}}{c_A^2} \right],
\]
where 
\[
c_A:= 1 - 2 A \| g '' \|_{L^{\infty}},
\qquad B_{A,g}:=  A L_A \max \{ L_A \| g''' \|_{L^{\infty}} , 2 c_A c_g \| g'' \|_{L^{\infty}} \},
\]
and $c_g > 0$ 
is a positive constant such that 
\begin{equation} \label{def cg}
4 \frac{L_A}{c_A} \| g'' \|_{L^{\infty}} t 
+ D_A  \| g'' \|_{L^{\infty}} t^2
< g' (0^+) \quad \text{ for every } t \in [0,  1/c_g).
\end{equation}
\end{remark}
We only give the proof of Lemma \ref{semiconc2}, 
since that one of Lemma \ref{semiconc3} is analogous.
\begin{proof}[Proof of Lemma \ref{semiconc2}]
For every $h \in \mathbb{R}^n \setminus \{ 0 \}$, 
$\alpha >0$, $\varepsilon > 0$, and $C > 0$,
we define the function
$$
u^{h, \alpha, \varepsilon}_C (x,y) : = \left[\frac{u(x+h,y)  + u(x-h,y) + C|h|^2}{2} + \alpha \, C|h|^2 \left( 1 - \frac{y}{A} \right) \right]^+ 
\hspace{-.2cm}+ \varepsilon | h|^2,
$$
and set $C^{\alpha, \varepsilon}_h:= \inf \{ C > 0 : u^{h, \alpha, \varepsilon}_C > u^+ \text{ in }\R^n \times [0,A]\}$.
We claim that
\begin{equation} \label{C3}
C^{\alpha, \varepsilon}_h \leq \max\{D_A - 2 \varepsilon, f_{\varepsilon} (\alpha) \}
\quad \text{ for every } \, \, \alpha  > \frac{A\| g '' \|_{L^{\infty}}}{c_A} \, \,  \text{ and } \, \, 0 < \varepsilon < D_A/2,
\end{equation}
where 
$$
f_{\varepsilon} (\alpha) := 
2 \frac{B_{A,g} + \varepsilon c_A^2 A  \| g'' \|_{L^{\infty}}}
{ c_A^2 (\alpha c_A -  A \| g'' \|_{L^{\infty}} )},
$$
and the constants $c_A$, $B_{A, g}$, and $c_g$ are defined in Remark~\ref{def constants}.
%
Before proving the claim, let us show how this will imply the lemma.

Setting 
$$
G_{\varepsilon} (\alpha):= (1 + 2 \alpha)  \max\{D_A - 2 \varepsilon, f_{\varepsilon} (\alpha) \}, 
$$
from \eqref{C3} and by definition of $C^{\alpha, \varepsilon}_h $ it follows that
\begin{align}
&\left[u(x+h,y)  + u(x-h,y) + G_{\varepsilon} (\alpha)
|h|^2  \right]^+ 
\hspace{-.2cm}+ 2 \varepsilon | h|^2 \nonumber \\
&\geq\left[u(x+h,y)  + u(x-h,y) + C^{\alpha, \varepsilon}_h  (1 + 2 \alpha) 
|h|^2  \right]^+ 
\hspace{-.2cm}+ 2 \varepsilon | h|^2 \nonumber \\
&\geq \left[u(x+h,y)  + u(x-h,y) + C^{\alpha, \varepsilon}_h \left(1 + 2 \alpha \left( 1 - \frac{y}{A} \right) \right)  
|h|^2  \right]^+ 
\hspace{-.2cm}+ 2 \varepsilon | h|^2 \nonumber \\
&\geq 2 u^+ (x, y),  \label{chain}
\end{align}
for every $(x, y) \in \R^n \times [0, A]$, $\alpha > A\| g '' \|_{L^{\infty}} / c_A$, 
and $\varepsilon \in (0, D_A/2)$.
One can check that for every fixed $\varepsilon \in (0, D_A/2)$
$$
G_{\varepsilon} (\alpha) = 
\begin{cases}
(1 + 2 \alpha) f_{\varepsilon} (\alpha) &\text{ for } A\| g '' \|_{L^{\infty}} / c_A < \alpha < \alpha_{\varepsilon}, \\
(1 + 2 \alpha) (D_A - 2 \varepsilon) &\text{ for } \alpha \geq \alpha_{\varepsilon}, 
\end{cases}
$$
where 
$$
\alpha_{\varepsilon}:= \frac{A \| g'' \|_{L^{\infty}}}{c_A} 
+ 2 \frac{B_{A,g} + \varepsilon c_A^2 A  \| g'' \|_{L^{\infty}}}{c_A^3 (D_A - 2 \varepsilon)}.
$$
From this, it follows that for every $\varepsilon \in (0,  D_A/2)$
\begin{align*}
&\min \left\{ G_{\varepsilon} (\alpha) : \alpha > A\| g '' \|_{L^{\infty}} / c_A \right\} 
= G_{\varepsilon} (\alpha_{\varepsilon}) =  \overline{D} - 2 \varepsilon, 
\end{align*}
with $\overline{D}$ defined in Remark~\ref{def constants}.
Therefore, minimizing in $\alpha$ the left hand side of \eqref{chain} we obtain
\begin{align*}
\left[u(x+h,y)  + u(x-h,y) + ( \overline{D} - 2 \varepsilon ) |h|^2  \right]^+ 
\hspace{-.2cm}+ 2 \varepsilon | h|^2
\geq 2 u^+ (x, y), 
\end{align*}
for every $(x, y) \in \R^n \times [0, A]$, and $\varepsilon \in (0, D_A/2)$.
Taking the limit as $\varepsilon \to 0^+$ we conclude.

Let us now show \eqref{C3}.
By definition of $C^{\alpha, \varepsilon}_h$, the maximum principle, 
and thanks to \eqref{tozero}, there exists $(\overline{x}, \overline{y}) \in \R^n \times \{ 0, A \}$ such that
\begin{equation} \label{wq3}
0 = u^{h, \alpha}_{C^{\alpha, \varepsilon}_h} (\overline{x}, \overline{y}) - u^+ (\overline{x}, \overline{y})
= \inf_{\overline{\Omega}^+} ( u^{h, \alpha}_{C^{\alpha, \varepsilon}_h} - u^+ ).
\end{equation}
In the following we assume $C^{\alpha, \varepsilon}_h > 0$, since otherwise \eqref{C3} is trivially satisfied.
We have two possibilities.

\smallskip

\noindent
\textbf{Case 1: $\overline{y} = A$.} 
At the contact point $(\overline{x}, A)$ we have
%
\begin{align*}
u^+_A (\overline{x}) = \left[\frac{ u_{\scriptscriptstyle A} (\overline{x}+h)  + u_{\scriptscriptstyle A} (\overline{x}-h)+ C^{\alpha, \varepsilon}_h |h|^2}{2}  \right]^+ 
\hspace{-.2cm}+ \varepsilon |h|^2 >0,
\end{align*}
so that $u^+_A (\overline{x}) = u_{\scriptscriptstyle A} (\overline{x}) > 0$. Therefore
\begin{align*}
u_{\scriptscriptstyle A} (\overline{x}) &= \left[\frac{ u_{\scriptscriptstyle A} (\overline{x}+h)  + u_{\scriptscriptstyle A} (\overline{x}-h)+ C^{\alpha, \varepsilon}_h |h|^2}{2}  \right]^+ 
\hspace{-.2cm}+ \varepsilon |h|^2 \\
&\geq \frac{ u_{\scriptscriptstyle A} (\overline{x}+h)  + u_{\scriptscriptstyle A} (\overline{x}-h)+ C^{\alpha, \varepsilon}_h |h|^2}{2} + \varepsilon |h|^2 \\
&\geq \frac{ 2 u_{\scriptscriptstyle A} (\overline{x}) - D_A |h|^2 + C^{\alpha, \varepsilon}_h |h|^2}{2} + \varepsilon  |h|^2 
= u_{\scriptscriptstyle A} (\overline{x}) + \frac{1}{2} \left( C^{\alpha, \varepsilon}_h - D_A + 2 \varepsilon \right) |h|^2, 
\end{align*}
which implies 
\begin{equation} \label{condition 1a}
C^{\alpha, \varepsilon}_h \leq D_A - 2 \varepsilon.
\end{equation}

\smallskip

\noindent
\textbf{Case 2: $\overline{y} =0$.} 
At the contact point $(\overline{x},0)$ we have
\begin{equation} \label{contact point0} 
0< \left[ u(\overline{x} +h,0)  + u(\overline{x} -h,0) + C^{\alpha, \varepsilon}_h (1 + 2 \alpha )|h|^2  \right]^+ 
\hspace{-.2cm}+ 2 \varepsilon | h|^2 = 2 u^+ (\overline{x}, 0).
\end{equation}
Therefore, $u^+ (\overline{x}, 0) = u (\overline{x}, 0)$ and
\begin{align}
0 &< g' (2 u (\overline{x}, 0)) = \partial_y u (\overline{x}, 0) \leq (\partial_y u^{h, \alpha}_{C^{\alpha, \varepsilon}_h}) (\overline{x},0) \label{derivative contact} \\
&= \partial_y \left\{  \left[\frac{u(\overline{x}+h,y)  + u(\overline{x}-h,y) + C^{\alpha, \varepsilon}_h |h|^2}{2} + \alpha \, C^{\alpha, \varepsilon}_h 
|h|^2 \left( 1 - \frac{y}{A} \right) \right]^+ \right\}
\mid_{y = 0}. \nonumber
\end{align}
From the fact that the right hand side in the above expression is positive, it follows that
\begin{align*}
\frac{u(\overline{x}+h,y)  + u(\overline{x}-h,y) + C^{\alpha, \varepsilon}_h |h|^2}{2} + \alpha \, C^{\alpha, \varepsilon}_h |h|^2 \left( 1 - \frac{y}{A} \right) \geq 0 \quad \text{ for $y$ close to $0$},
\end{align*}
and from \eqref{derivative contact} we get
\begin{align}
0 <& g' (2 u (\overline{x}, 0)) 
\leq \frac{1}{2} \left[ \partial_y u (\overline{x} + h,0) + \partial_y u (\overline{x} - h,0) \right] 
- \frac{\alpha C^{\alpha, \varepsilon}_h |h|^2}{A}. \label{one2}
\end{align}
Moreover, identity \eqref{contact point0} becomes
\begin{equation} \label{contact point} 
 u(\overline{x} +h,0)  + u(\overline{x} -h,0) + C^{\alpha, \varepsilon}_h (1 + 2 \alpha )|h|^2  + 2 \varepsilon | h|^2 = 2 u (\overline{x}, 0), 
\end{equation}
Observing now that the role played by $u (\overline{x} + h,0)$ and $u (\overline{x} - h,0)$
is symmetric, we only need to consider three subcases.

\vspace{.2cm}

\noindent
\textbf{Case 2a: $\overline{y} =0$ with $u (\overline{x} + h,0) \geq 0$ and $u (\overline{x} - h,0) \geq 0$.} 
In this case, recalling \eqref{normalderiv}, from relation \eqref{one2} we obtain
\begin{align}
0 <& g' (2 u (\overline{x}, 0)) 
\leq \frac{1}{2} \left[ g' (2 u (\overline{x} + h,0) ) + g' (2 u (\overline{x} - h,0) ) \right] 
- \frac{\alpha C^{\alpha, \varepsilon}_h |h|^2}{A}. \label{one}
\end{align}
Let us now show that for every $a,b \geq 0$
\begin{align}
\frac{g' (a) + g' (b)}{2} \leq g' \left( \frac{a+b}{2} \right) + \frac{1}{8} \| g''' \|_{L^{\infty}} ( a-b )^2. \label{g'''}
\end{align}
Indeed, there exist $\theta, \tau \in (0,1)$ such that
\begin{align*}
g' (a) &= g' \left( \frac{a+b}{2} + \frac{a-b}{2} \right)
= g' \left( \frac{a+b}{2} \right) + g'' \left( \frac{a+b}{2} \right) \frac{a-b}{2} \\
&+ \frac{1}{2} g''' \mid_{ \frac{a+b}{2} + \theta \frac{a-b}{2}} \left(  \frac{a-b}{2} \right)^2,
\end{align*}
and
\begin{align*}
g' (b) &= g' \left( \frac{a+b}{2} - \frac{a-b}{2} \right)
= g' \left( \frac{a+b}{2} \right) - g'' \left( \frac{a+b}{2} \right) \frac{a-b}{2} \\
&\hspace{.2cm}+ \frac{1}{2} g''' \mid_{\frac{a+b}{2} - \tau \frac{a-b}{2}} \left(  \frac{a-b}{2} \right)^2.
\end{align*}
Summing up the last two relations we obtain the claim.
Applying \eqref{g'''} with 
$a = 2 u (\overline{x}+h,0)$ and 
$b = 2 u (\overline{x}-h,0)$, and using \eqref{contact point},
relation \eqref{one} gives
\begin{align*}
g' (2 u (\overline{x}, 0))  
& \leq g' \big(2 u (\overline{x},0) - (1 + 2 \alpha) C^{\alpha, \varepsilon}_h |h|^2 - 2 \varepsilon |h|^2 \big) \\
& + \frac{1}{2} \| g''' \|_{L^{\infty}} ( u (\overline{x}+h,0) - u (\overline{x}-h,0) )^2
 - \frac{\alpha C^{\alpha, \varepsilon}_h |h|^2}{A}.
\end{align*}
By Lemma \ref{Lip2} it follows that $u (\cdot, 0)$ is Lipschitz continuous,
with Lipschitz constant $L_A/c_A$. 
Therefore, recalling that $c_A=1-2A\| g'' \|_{L^{\infty}},$ we get
\begin{align*}
g' (2 u (\overline{x}, 0)) 
&\leq g' (2 u (\overline{x},0)) + \big[(1 + 2 \alpha) C^{\alpha, \varepsilon}_h + 2 \varepsilon \big] |h|^2 \| g'' \|_{L^{\infty}} 
+ 2 \frac{L^2_A}{c_A^2} \| g''' \|_{L^{\infty}} |h|^2  - \frac{\alpha C^{\alpha, \varepsilon}_h |h|^2}{A}  \\
& = g' (2 u (\overline{x},0)) + 
|h|^2 \left[2 \frac{L^2_A}{c_A^2} \| g''' \|_{L^{\infty}} + 2 \varepsilon \| g'' \|_{L^{\infty}} 
+   C^{\alpha, \varepsilon}_h\left(\| g'' \|_{L^{\infty}}  
- \alpha \frac{c_A}{A}  \right) \right]  \\
&= g' (2 u (\overline{x},0)) + 
|h|^2 \left[2 \frac{L^2_A}{c_A^2} \| g''' \|_{L^{\infty}} + 2 \varepsilon \| g'' \|_{L^{\infty}} 
-   C^{\alpha, \varepsilon}_h \frac{\alpha c_A - A \| g'' \|_{L^{\infty}}}{A}  \right]. 
\end{align*}
The inequality above is only possible if the last term in the right hand side 
is non-negative, that is, if 
\begin{equation} \label{condition 1b}
C^{\alpha, \varepsilon}_h \leq \frac{2 A ( L^2_A \| g''' \|_{L^{\infty}} + \varepsilon c_A^2 \| g'' \|_{L^{\infty}} )}
{ c_A^2 (\alpha c_A -  A \| g'' \|_{L^{\infty}} )}.
\end{equation}

\vspace{.2cm}

\noindent
\textbf{Case 2b: $\overline{y} =0$ with $u (\overline{x} + h,0) \geq 0$ and $u (\overline{x} - h,0) < 0$.}
In this case, recalling \eqref{normalderiv},  \eqref{one2} implies that
\begin{align}
0 < g' (2 u (\overline{x}, 0)) 
\leq \frac{1}{2} \left[ g' (2 u (\overline{x} + h,0) ) - g' (2 |u (\overline{x} - h,0)| ) \right] 
- \frac{\alpha C^{\alpha, \varepsilon}_h |h|^2}{A}. \label{different signs}
\end{align}
By Lemma \ref{Lip2}, $u (\cdot, 0)$ is Lipschitz continuous,
with Lipschitz constant $L_A/c_A$.
Therefore, the right hand side of the above expression can be estimated as follows
\begin{align*}
&\frac{1}{2} \left[ g' (2 | u (\overline{x} + h,0)| ) - g' (2 |u (\overline{x} - h,0)| ) \right] 
- \frac{\alpha C^{\alpha, \varepsilon}_h |h|^2}{A} \\
&\leq  
\| g'' \|_{L^{\infty}} \big| \,  |u (\overline{x} - h,0)| - | u (\overline{x} + h,0) | \, \big| 
- \frac{\alpha C^{\alpha, \varepsilon}_h |h|^2}{A} \\
&\leq  
2 \frac{L_A}{c_A} \| g'' \|_{L^{\infty}}  | h | - \frac{\alpha C^{\alpha, \varepsilon}_h |h|^2}{A}. 
\end{align*}
On the other hand, thanks to \eqref{contact point} 
we can estimate the left hand side of \eqref{different signs} as
\begin{align*}
&g' (2 u (\overline{x}, 0)) 
= g' \big( | u(\overline{x} +h,0)|  - | u(\overline{x} -h,0)| + C^{\alpha, \varepsilon}_h (1 + 2 \alpha )|h|^2  + 2 \varepsilon | h|^2 \big) \\
&\geq g' (0^+) - \| g'' \|_{L^{\infty}} \big|  | u(\overline{x} +h,0)|  - | u(\overline{x} -h,0)| \big|
- \| g'' \|_{L^{\infty}} [ C^{\alpha, \varepsilon}_h (1 + 2 \alpha )  + 2 \varepsilon ] |h|^2 \\
&\geq g' (0^+) - 2 \frac{L_A}{c_A} \| g'' \|_{L^{\infty}}  | h |
- \| g'' \|_{L^{\infty}} [ C^{\alpha, \varepsilon}_h (1 + 2 \alpha )  + 2 \varepsilon ] |h|^2.
\end{align*}
Combining the last two inequalities and \eqref{different signs} we obtain 
\begin{align}
g' (0^+) &\leq 4 \frac{L_A}{c_A} \| g'' \|_{L^{\infty}} | h |
+ \| g'' \|_{L^{\infty}} [ C^{\alpha, \varepsilon}_h (1 + 2 \alpha ) + 2 \varepsilon ]|h|^2
- \frac{\alpha C^{\alpha, \varepsilon}_h |h|^2}{A} \nonumber \\
&= 4 \frac{L_A}{c_A} \| g'' \|_{L^{\infty}} | h |
+  \left[ 2 \varepsilon  \| g'' \|_{L^{\infty}} 
-   C^{\alpha, \varepsilon}_h \frac{\alpha c_A - A \| g'' \|_{L^{\infty}}}{A} \right] |h|^2.
\label{last inequality of the list}
\end{align}
We now distinguish two subcases.

\vspace{.2cm}

\noindent
\textbf{Case 2bi: Small values of $|h|$.}
Let $c_g > 0$ be defined by \eqref{def cg}. 
From \eqref{last inequality of the list}
it follows that for every $| h | \in [0,  1/c_g)$ we have 
\begin{align*}
g' (0^+) &\leq 4 \frac{L_A}{c_A} \| g'' \|_{L^{\infty}} | h | 
+ 2 \varepsilon  \| g'' \|_{L^{\infty}} |h|^2 \\
&< 4 \frac{L_A}{c_A} \| g'' \|_{L^{\infty}} | h | 
+ D_A  \| g'' \|_{L^{\infty}} |h|^2 
< g' (0^+), 
\end{align*}
which is impossible. Therefore,  \eqref{last inequality of the list} 
can only be satisfied for $| h | \geq 1/c_g$.

\vspace{.2cm}

\noindent
\textbf{Case 2bii: $|h|$ large.}
Suppose now that $| h | \geq 1/c_g$, where $c_g$ is given by  \eqref{def cg}.
Then, $|h| \leq c_g |h|^2$ and thanks to \eqref{last inequality of the list}
we obtain
\begin{align*}
g' (0^+) &\leq  \left[ \left( 4 \frac{L_A c_g}{ c_A} 
+   2 \varepsilon \right)  \| g'' \|_{L^{\infty}} 
-   C^{\alpha, \varepsilon}_h \frac{\alpha c_A - A \| g'' \|_{L^{\infty}}}{A} \right] |h|^2.
\end{align*}
Last inequality is impossible, unless 
\begin{equation} \label{condition 1c}
C^{\alpha, \varepsilon}_h < \frac{\left( 4 \frac{L_A c_g}{c_A} 
+   2 \varepsilon \right) A \| g'' \|_{L^{\infty}}}{\alpha c_A - A \| g'' \|_{L^{\infty}}}
= \frac{2 A ( 2 L_A c_A c_g \| g'' \|_{L^{\infty}} + \varepsilon c_A^2 \| g'' \|_{L^{\infty}} )}
{ c_A^2 (\alpha c_A -  A \| g'' \|_{L^{\infty}} )}.
\end{equation}

\vspace{.2cm}

\noindent
\textbf{Case 2c: $\overline{y} =0$ with $u (\overline{x} + h,0) < 0$ and $u (\overline{x} - h,0) < 0$.}
In this case, inequality \eqref{one2} becomes
\begin{align*}
0 <& g' (2 u (\overline{x}, 0)) 
\leq - \frac{1}{2} \left[ g' (|u (\overline{x} + h,0)|)  + g' (|u (\overline{x} - h,0)|) \right] 
- \frac{\alpha C^{\alpha, \varepsilon}_h |h|^2}{A} < 0, 
\end{align*}
which is impossible.

\vspace{.2cm}

\noindent
\textbf{Case 2d: Proof of  \eqref{C3}.}
From the previous steps it follows that at least one among inequalities 
\eqref{condition 1a}, \eqref{condition 1b}, and \eqref{condition 1c}
has to be satisfied.
Recalling the definition of $f_{\varepsilon} (\alpha)$,
this concludes the proof of \eqref{C3} and, in turn, of the lemma.
\end{proof}
\end{section}

\section{Phases Separation and Optimal Regularity} \label{phases}

As already mentioned in the Introduction, the main problem in establishing optimal regularity
is that one cannot exclude \textit{a priori} the existence
of free boundary points where the function $u$ changes sign.
Indeed, at such points $\partial_y u (\cdot, 0)$ would be discontinuous, with a jump of $2 g' (0^+)$.
This is ruled out by the following proposition, which 
shows that the two ``phases'' $\{ x \in \mathbb{R}^n : u (x, 0) > 0 \}$ 
and $\{ x \in \mathbb{R}^n : u (x, 0) < 0 \}$ are well separated.
\begin{proposition} \label{phaseseparation}
Let $u_{\scriptscriptstyle A}$ and $g$ be as in Theorem~\ref{opt_reg_theorem}, 
let $u \in H^1 (\R^n \times (0,A))$ be a solution of \eqref{defcritptn},
and let $x \in \partial K_u$, where $K_u$ is defined by \eqref{fracture}. 
Then, there exists $r_0 =r_0 (x) \in (0, 1)$ such that
$$
B^n_{r_0} (x) \cap \overline{\{ x' \in \mathbb{R}^n : u (x', 0) > 0 \}}  
\cap \overline{\{ x' \in \mathbb{R}^n : u (x', 0) < 0 \}}  = \emptyset.
$$
\end{proposition}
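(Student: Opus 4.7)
The plan is to argue by contradiction. Suppose the conclusion fails at some $\bar x\in\partial K_u$: there exist sequences $x_k^+, x_k^-\to\bar x$ with $u(x_k^+,0)>0$ and $u(x_k^-,0)<0$. Since $u(\cdot,0)$ is continuous by Lemma~\ref{Lip2}, we have $u(\bar x,0)=0$, so $\bar x$ is a genuine sign-changing free boundary point. I will derive a contradiction in two stages: first, show that $u(\cdot,0)$ is differentiable at $\bar x$ with $\nabla_x u(\bar x,0)=0$; second, construct barriers incompatible with the boundary condition.

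For differentiability at $\bar x$, I would exploit Remark~\ref{useful phases sep} at $y=0$. By Lemma~\ref{semiconc2}, $u^+(\cdot,0)$ is semiconvex and attains its minimum value $0$ at $\bar x$; by Lemma~\ref{semiconc3}, $u^-(\cdot,0)$ is semiconcave and attains its maximum value $0$ at $\bar x$. Any proximal subgradient $p\in\partial u^+(\bar x,0)$ satisfies $u^+(x,0)\geq p\cdot(x-\bar x)-\tfrac{\bar D}{2}|x-\bar x|^2$. Testing this at $x=x_k^-$ (where $u^+=0$) and normalizing by $|x_k^- -\bar x|$, any limit direction $e$ of $(x_k^--\bar x)/|x_k^- -\bar x|$ must satisfy $p\cdot e\leq 0$; by freely choosing the approaching sequences and using the strong form of Remark~\ref{useful phases sep} (which gives a two-sided $O(|h|^2)$ control of $u(\bar x+h,0)+u(\bar x-h,0)$), one concludes $p=0$. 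A symmetric argument for the supergradient of $u^-$ gives $q=0$, and therefore both $u^+(\cdot,0)$ and $u^-(\cdot,0)$ are differentiable at $\bar x$ with vanishing gradient. Consequently $u(x,0)=o(|x-\bar x|)$ as $x\to\bar x$.

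In the second stage, I would construct an harmonic barrier of the form
\[
\Phi(x,y):=g'(0^+)\,y+\eta\bigl(|x-\bar x|^2-n\,y^2\bigr),
\]
with $\eta>0$ small. Then $\Delta\Phi=0$, $\Phi(\bar x,0)=0$, and $\partial_y\Phi(\cdot,0)\equiv g'(0^+)$. On the positive-phase sequence $x_k^+\to\bar x$, the boundary condition gives $\partial_y u(x_k^+,0)=g'(2u(x_k^+,0))$, which by concavity of $g$ is strictly less than $g'(0^+)$. Using the differentiability just proved together with the bound $|\partial_y u|\leq g'(0^+)$ and harmonicity of $u-\Phi$ in $\R^n\times(0,A)$, I would show that in a sufficiently small half-ball centred at $(\bar x,0)$ the function $u-\Phi$ attains an interior nonpositive infimum on $\{y=0\}$ only at $\bar x$ itself, and that the boundary/Hopf analysis at $\bar x$ (using $\nabla_x u(\bar x,0)=0$ and the jump $\partial_y u(x_k^+,0)\to g'(0^+)$ vs. $\partial_y u(x_k^-,0)\to -g'(0^+)$) forces a sign-inconsistency. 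The symmetric barrier $-\Phi$ rules out the approach from the negative phase, completing the contradiction.

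The main obstacle is the differentiability step: the inequalities in Remark~\ref{useful phases sep} become trivial at a point where $u(\bar x,0)=0$ unless the two-phase geometry is invoked. The technical heart of the argument is therefore to convert the accumulation of \emph{both} phases at $\bar x$, via the semiconvex/semiconcave control on $u^\pm$, into a uniqueness statement for the sub- and supergradients of $u^\pm$ at $\bar x$. Once this is achieved, the barrier construction is essentially a quantitative Hopf-type argument that capitalizes on the pointwise constraint $|\partial_y u|\leq g'(0^+)$ together with the jump that a sign change would produce.
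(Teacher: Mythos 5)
Your high-level plan (first prove differentiability of $u(\cdot,0)$ at a sign-changing free-boundary point with $\nabla_x u=0$, then derive a contradiction with barriers) is the same as the paper's, but the crucial first stage has a genuine gap.

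The claim that one can ``conclude $p=0$'' for any subgradient $p\in\partial_x^- u^+(\bar x,0)$ by ``freely choosing the approaching sequences'' does not follow from what you write. Testing the semiconvexity inequality $u^+(x,0)\geq p\cdot(x-\bar x)-\tfrac{\bar D}{2}|x-\bar x|^2$ at the points $x_k^-$ (where $u^+=0$) only gives $p\cdot e\leq 0$ for the directions $e$ along which the \emph{negative phase actually accumulates}, and you have no control over these directions. In fact the opposite of ``free choice'' is true: the paper shows (Steps~1a--1b) that a nonzero subgradient $b\mathbf e_1\in\partial_x^- u^+$ forces $u>0$ on a whole tangent ball $B_{|x_b|}(x_b)$, which confines the negative phase to the antipodal sector; the Remark~\ref{useful phases sep} inequality is then applied \emph{centered at the negative points $x_k$} (not at $\bar x$, where, as you correctly note, it degenerates), producing a nonzero antipodal supergradient of $u^-$. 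What finally kills the nonzero subgradient is not a geometric squeeze but a Poisson-kernel computation: with the two tangent balls of opposite sign, $\partial_{x_1}u(\cdot,0)$ is shown to blow up logarithmically along $w_\varepsilon=-\varepsilon\mathbf e_1$, contradicting the Lipschitz bound from Lemma~\ref{Lip2}. That singular-integral step is the technical heart of Step~1 and is absent from your sketch; the semiconvexity alone cannot produce the contradiction. Your closing remark that this step is ``the main obstacle'' is accurate, but the proposal leaves it unsolved rather than resolving it.

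The barrier stage is also not quite what the paper does, although it is closer in spirit. Your $\Phi(x,y)=g'(0^+)y+\eta(|x-\bar x|^2-ny^2)$ has a fixed coefficient $\eta$ and is centered at $\bar x$, whereas the paper's comparison functions $V^{\pm}$ use the modulus $\sigma(r)/r$ coming from the just-proved differentiability (this is essential: without the $o(|x|)$ smallness of $u(\cdot,0)$, Cases~2b and~2c of the maximum-principle analysis do not close) and are centered at a nearby strictly positive (resp.\ negative) point $x_0^{\pm}$, which is what makes $\max V^{\pm}>0$ and forces the maximum to the top lid $\{y=\eta r\}$. The contradiction is then not a ``sign-inconsistency'' on $\{y=0\}$, but that a Harnack estimate on $\partial_y u$ propagated from the top lid gives simultaneously $\partial_y u\geq\tfrac34 g'(0^+)$ and $\partial_y u\leq-\tfrac34 g'(0^+)$ on a common interior box. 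You would need to supply both of these ingredients to complete Stage~2.
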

Before proving Proposition~\ref{phaseseparation}, 
we show how this allows us to prove Theorem~\ref{opt_reg_theorem}.
\begin{proof}[Proof of Theorem~\ref{opt_reg_theorem}]
Let $x \in \partial K_u$. Without any loss of generality, thanks to Proposition~\ref{phaseseparation}, 
we can assume that 
$$
u (x', 0) \geq 0 \qquad \text{ for every } x' \in B^n_{r_0} (x), 
$$ 
where $r_0$ is given by Proposition~\ref{phaseseparation}. 
We claim that there exists $0 < \widehat{r} \leq r_0 $ and $D' > 0$, 
such that
\begin{equation} \label{semiconvexity u}
D^2_{xx} u (x', y) \geq  -D' \qquad \text{ for every } x' \in B_{\widehat{r}} (x, 0) \cap \{ y > 0 \}.
\end{equation}
Indeed, let us write $u = u_1 + u_2 + u_3$, where $u_1$, $u_2$, and $u_3$ are the harmonic functions 
in $\R^{n} \times (0,A)$ with the following boundary conditions:
$$
\begin{cases}
u_1 = 0 &\text{ on } \{ y = 0 \}, \\
u_1 = u_{\scriptscriptstyle A} &\text{ on } \{ y = A \}, 
\end{cases}
\qquad 
\begin{cases}
u_2 = u^+ &\text{ on } \{ y = 0 \}, \\
u_2 = 0 &\text{ on } \{ y = A \}, 
\end{cases}
\qquad 
\begin{cases}
u_3 = u^- &\text{ on } \{ y = 0 \}, \\
u_3 = 0 &\text{ on } \{ y = A \}. 
\end{cases}
$$ 
Note now that $u_3$ is $C^{\infty}$ in a neighborhood of $(x, 0)$, 
since $u^- = 0$ in $B^n_{r_0} (x)$.
Analogously, $u_1$ is also $C^{\infty}$ in a neighborhood of $(x, 0)$.
On the other hand, by maximum principle $u_2 \geq 0$. 
Therefore, an argument similar to the one used in the proof of 
Lemma~\ref{semiconc2} shows that, for every $y \in [0,A]$, 
$u_2 (\cdot, y)$ is semiconvex.
Therefore, 
$$
D^2_{xx} u_2 (x', y) \geq - \overline{D} \quad \text{ for every } (x', y) \in \R^n \times (0, A).
$$ 
Then, using the fact that $u_1$ and $u_3$ are smooth, \eqref{semiconvexity u} follows.

We now note that $v$ defined in \eqref{v intro} 
is a harmonic function in $\R^n \times (0,A)$ satisfying
$$
v\geq 0\quad \text{and}\quad v [ \partial_y v + g' (0^+) - g' (2 |v|)] \equiv 0 \qquad \text{on }\{y=0\} \cap B^n_{\widehat{r}} (x), 
$$
which is just a minor variation of the classical Signorini problem $v\partial_yv=0$ \cite{AC,ACS}.
Thus, the remaining part of the proof of Theorem~\ref{opt_reg_theorem} 
can easily be obtained by repeating (with the needed minor modifications) the arguments used in \cite{AC, CF}.
\end{proof}
We now give the proof of Proposition~\ref{phaseseparation}.
\begin{proof}[Proof of Proposition~\ref{phaseseparation}]
Without any loss of generality, we can assume $x = 0$.
We will argue by contradiction, assuming that
\begin{equation} \label{contradic}
B^n_{r} \cap \overline{\{ x' \in \mathbb{R}^n : u (x', 0) > 0 \}}  
\cap 
\overline{\{ x' \in \mathbb{R}^n : u (x', 0) < 0 \}} \neq \emptyset \qquad \text{ for every } r > 0.
\end{equation}
We divide the proof into two steps.

\vspace{.2cm}

\noindent
\textbf{Step 1.} We show that \eqref{contradic} implies  
that $u (\cdot, 0)$ is differentiable at $x=0$ with $\nabla_x u (0,0) = 0$. 
Since $u (0, 0) = 0$, $u^+\geq 0$, and $u^-\leq 0$, we have 
$$
0 \in \partial^-_x u^+ (0, 0) \quad \text{ and } \quad 0 \in \partial^+_x u^- (0,0),
$$
where we denote by $\partial^-_x u^+ (\cdot, 0)$ and $\partial^+_x u^- (\cdot,0)$ 
the subdifferential of $u^+ (\cdot, 0)$  
and the superdifferential of $u^- (\cdot, 0)$, respectively.
Suppose now that \eqref{contradic} is satisfied but, by contradiction, there exists $\xi \in \R^n$ such that
$$
\xi \in \big( \partial^-_x u^+ (0, 0) \cup \partial^+_x u^- (0,0) \big) \setminus \{ 0 \}.
$$
Without loss of generality, we can assume $\xi \in \partial^-_x u^+ (0, 0)$ and 
$\xi = b \mathbf{e}_1$  for some $b > 0$, i.e.
\begin{equation} \label{xi}
b \mathbf{e}_1 \in \partial^-_x u^+ (0, 0), \quad \text{ for some } b > 0.
\end{equation}
Since, by Lemma~\ref{semiconc2}, $u^+ (\cdot, 0)$ is semiconvex with semiconvexity constant $\overline{D}$, 
\eqref{xi} implies that
\begin{equation} \label{Bxb}
u^+ (x, 0) + \overline{D} |x|^2 \geq u^+ (0, 0) + b \mathbf{e}_1 \cdot x \quad \text{ for every } x\in \R^n.
\end{equation}
Setting $x_b : = \frac{b}{2 \overline{D}} \mathbf{e}_1$, the above inequality can be written as
$u^+ (x, 0) \geq \overline{D} ( |x_b|^2 - | x - x_b|^2 )$, so that
\begin{equation} \label{xb}
u (x, 0) > 0 \quad \text{ for every } x \in B^n_{|x_b|} (x_b).
\end{equation}
We now divide the proof of Step 1 into two substeps.

\vspace{.2cm}

\noindent
\textbf{Step 1a.} We show that
$$
\eqref{xi} \quad \Longrightarrow \quad \partial^+_x u^- (0, 0) \setminus \{ 0 \} \neq \emptyset.
$$
Suppose, by contradiction, that \eqref{xi} is satisfied but $\partial^+_x u^- (0, 0) = \{ 0 \}$. 
Then, since $u^-$ is semiconcave, $u^- (\cdot, 0)$ is differentiable in $0$ and 
\begin{equation} \label{u-diff}
u (x, 0) \geq u^- (x, 0) \geq o (|x|) \quad \text{ for every } x \in \R^n. 
\end{equation}
By \eqref{contradic}, we can find a sequence $\{ x_k \}_{k \in \mathbb{N}}
\subset \R^n \setminus \{ 0 \}$ with $x_k \to 0$ such that
\begin{equation} \label{u neg k}
u (x_k, 0) < 0 \quad \text{ for every } k \in \mathbb{N}.
\end{equation}
Setting $h_k:= 2 |x_k| \mathbf{e}_1$, thanks to \eqref{u-diff} we have 
\begin{equation} \label{u-k}
u (x_k - h_k, 0 ) \geq u^- (x_k - h_k, 0 ) = o (|x_k - h_k|) = o (|x_k|), 
\end{equation}
where the last equality follows from our choice of the sequence $\{ h_k \}_{k \in \mathbb{N}}$.
On the other hand, by \eqref{Bxb} it follows that
\begin{align} 
&u^+ (x_k + h_k, 0)  \geq b \mathbf{e}_1 \cdot (x_k + h_k) - \overline{D} |x_k + h_k|^2 \nonumber \\
&= b |x_k| \left( 2 + \mathbf{e}_1 \cdot \frac{x_k}{|x_k|}  \right)
- \overline{D} |x_k|^2 \left( 5 + 4 \mathbf{e}_1 \cdot \frac{x_k}{|x_k|} \right) \nonumber \\
& \geq b |x_k| 
- 9\overline{D} |x_k|^2  \geq \frac{b}{2} |x_k|,  \label{u+k}
\end{align}
for $k$ sufficiently large.
Thanks to Remark~\ref{useful phases sep}, combining \eqref{u neg k}, 
\eqref{u-k}, and \eqref{u+k}, we have 
that, for $k$ large enough,
\begin{align*}
0 &> 2 u (x_k, 0 ) \geq 2 u^- (x_k, 0 ) \\
&\geq \left[u (x_k + h_k, 0 )  + u (x_k - h_k, 0 ) - \overline{C} |h_k|^2  \right]^- \\
& \geq \left[ \frac{b}{2} |x_k|  + o (|x_k|) - 4 \overline{C} |x_k|^2  \right]^- = 0, 
\end{align*}
which is impossible.

\vspace{.2cm}

\noindent
\textbf{Step 1b.} We conclude the proof of Step 1.
By Step 1a, there exists $d > 0$ and $\mathbf{e} \in \mathbb{S}^{n} \cap  \{ y = 0 \}$ 
such that $d \mathbf{e} \in \partial^+_x u^- (0,0)$.
Since, by Lemma~\ref{semiconc3}, $u^- (\cdot, 0)$ 
is semiconcave with semiconcavity constant $\overline{C}$, by repeating the same argument used 
to show \eqref{Bxb} we have that
\begin{equation}
\label{xd}
u (x, 0) < 0 \qquad \text{ for every } x \in B^n_{|x_{d}|} (x_{d}), 
\end{equation}
where we set $x_d : = \frac{d}{2 \overline{C}} \mathbf{e}$.
Taking into account \eqref{xb}, this implies $\mathbf{e} = - \mathbf{e}_1$, 
thus $x_d = - \frac{d}{2 \overline{C}} \mathbf{e}_1$.
We will now show that $\partial_{x_1} u (\cdot, 0)$ is unbounded, 
against Lemma~\ref{Lip2}. 

To this aim, for every $\varepsilon > 0$ we set $w_{\varepsilon}:= - \varepsilon \mathbf{e}_1$.
In this way, $w_\varepsilon \to 0$ as $\varepsilon \to 0^+$ and 
$w_{\varepsilon}  \in B^n_{|x_{d}|} (x_{d})$
for $\varepsilon$ sufficiently small, so that $u (w_\varepsilon ,0 ) < 0$.
We claim that 
\begin{equation} \label{unbounded gradient}
\lim_{\varepsilon \to 0^+} \partial_{x_1} u (w_{\varepsilon},0) = + \infty.
\end{equation}
Let $\tilde{u}: \R^n \times[0, \infty) \to \R$ be the harmonic extension
of $u (x, 0)$ to the half space $\R^n \times[0, \infty)$. 
We have 
\begin{align*}
&\frac{1}{C_n} \partial_{x_1} u (w_{\varepsilon},0) 
= \frac{1}{C_n} \partial_{x_1} \tilde{u} (w_{\varepsilon},0) 
= \int_{\mathbb{R}^n} \frac{((w_{\varepsilon})_1 -  z_1)}
{|w_{\varepsilon} - z|^{n+1}} 
\left( \partial_y \tilde{u} (w_{\varepsilon}, 0) - \partial_y \tilde{u} (z, 0) \right) \, d z \\
&= \int_{\mathbb{R}^n} \frac{((w_{\varepsilon})_1 -  z_1)}
{|w_{\varepsilon} - z|^{n+1}} 
\left( \partial_y (\tilde{u} -u) (w_{\varepsilon}, 0) - \partial_y (\tilde{u} -u)(z, 0) \right) \, d z \\
&+ \int_{\mathbb{R}^n} \frac{((w_{\varepsilon})_1 -  z_1)}
{|w_{\varepsilon} - z|^{n+1}} 
\left( \partial_y u (w_{\varepsilon}, 0) - \partial_y u (z, 0) \right) \, d z,
\end{align*}
for some positive dimensional constant $C_n$.
Since $\tilde{u} - u$ vanishes on $\{ y = 0 \}$ 
and is harmonic in $\R^n \times (0,A)$, 
we have $x \mapsto \partial_y (\tilde{u} -u) (x, 0) \in C^{\infty} (\R^n)$. 
Therefore, to prove our claim it will be sufficient to show that 
the last integral diverges as $\varepsilon \to 0^+$.
Let $f : \R^n \to \mathbb{R}$ be defined as
$$
f (x) : = 
\begin{cases}
\partial_y u (x, 0) & \text{ if } u (x, 0) < 0, \\
- g' (0^+) & \text{ if } u (x, 0) \geq 0.
\end{cases}
$$
Since $\partial_y u (x, 0)=-g'(2|u (x, 0)|)$ where $u (x, 0) < 0$,
it follows that
that $f$ is Lipschitz continuous, with Lipschitz constant 
$2 \| g '' \|_{L^{\infty}} \frac{L_A}{1-2A\| g '' \|_{L^{\infty}}}$ (recall Lemma~\ref{Lip2}).
In the following, we set 
\begin{equation} \label{def lambda}
\lambda:= \min \left\{ \frac{b}{\overline{D}} , \frac{d}{\overline{C}} \right\}.
\end{equation}
Given $r > 0$ with $0< r < \lambda / 4$, we split the integral 
under consideration as follows: 
\begin{align*}
&\int_{\mathbb{R}^n} \frac{((w_{\varepsilon})_1 -  z_1)}
{|w_{\varepsilon} - z|^{n+1}} 
\left( \partial_y u (w_{\varepsilon}, 0) - \partial_y u (z, 0) \right) \, d z \\
&= \int_{\mathbb{R}^n \setminus B^n_r} \frac{((w_{\varepsilon})_1 -  z_1)}
{|w_{\varepsilon} - z|^{n+1}} 
\left( \partial_y u (w_{\varepsilon}, 0) - \partial_y u (z, 0) \right) \, d z \\
&+ \int_{B^n_r} \frac{((w_{\varepsilon})_1 -  z_1)}
{|w_{\varepsilon} - z|^{n+1}} 
\left( \partial_y u (w_{\varepsilon}, 0) - \partial_y u (z, 0) \right) \, d z,
\end{align*}
We can disregard the first integral, which is bounded for $\varepsilon$ small enough.
Concerning the second integral, using the fact that $u (w_{\varepsilon}, 0) < 0$, we have
\begin{align*}
& \int_{B^n_r} \frac{((w_{\varepsilon})_1 -  z_1)}
{|w_{\varepsilon} - z|^{n+1}} 
\left( \partial_y u (w_{\varepsilon}, 0) - \partial_y u (z, 0) \right) \, d z
= \int_{B^n_r} \frac{((w_{\varepsilon})_1 -  z_1)}
{|w_{\varepsilon} - z|^{n+1}} 
\left( \partial_y u (w_{\varepsilon}, 0) - f (w_{\varepsilon}) \right) \, d z \\
&+ \int_{B^n_r} \frac{((w_{\varepsilon})_1 -  z_1)}
{|w_{\varepsilon} - z|^{n+1}} 
\left( f (z) - \partial_y u (z, 0) \right) \, d z
+ \int_{B^n_r} \frac{((w_{\varepsilon})_1 -  z_1)}
{|w_{\varepsilon} - z|^{n+1}} 
\left( f (w_{\varepsilon}) - f (z) \right) \, d z \\
&= \int_{B^n_r} \frac{((w_{\varepsilon})_1 -  z_1)}
{|w_{\varepsilon} - z|^{n+1}} 
\left( f (z) - \partial_y u (z, 0) \right) \, d z
+ \int_{B^n_r} \frac{((w_{\varepsilon})_1 -  z_1)}
{|w_{\varepsilon} - z|^{n+1}} 
\left( f (w_{\varepsilon}) - f (z) \right) \, d z \\
&=: I_1^{\varepsilon} + I_2^{\varepsilon}.
\end{align*}
Since $f$ is Lipschitz continuous, $I_2^{\varepsilon}$ is uniformly bounded in $\varepsilon$. 
By definition of $f$ and by \eqref{xb}, we can split  $I_1^{\varepsilon}$ in the following way:
\begin{align*}
&I_1^{\varepsilon} =\int_{B^n_r} \frac{((w_{\varepsilon})_1 -  z_1)}
{|w_{\varepsilon} - z|^{n+1}} 
\left( f (z) - \partial_y u (z, 0) \right) \, d z 
= \int_{B^n_r \cap \{ u \geq 0 \}} \frac{((w_{\varepsilon})_1 -  z_1)}
{|w_{\varepsilon} - z|^{n+1}} 
\left( f (z) - \partial_y u (z, 0) \right) \, d z \\
&= \int_{B^n_r \cap B^n_{|x_b|} (x_b)} \frac{((w_{\varepsilon})_1 -  z_1)}
{|w_{\varepsilon} - z|^{n+1}} \left( f (z) - \partial_y u (z, 0) \right) \, d z \\ 
&+ \int_{B^n_r \cap \{ u \geq 0 \} \setminus B^n_{|x_b|} (x_b)} \frac{((w_{\varepsilon})_1 -  z_1)}
{|w_{\varepsilon} - z|^{n+1}}
\left( f (z) - \partial_y u (z, 0) \right) \, d z = : I_{1,1}^{\varepsilon} + I_{1,2}^{\varepsilon}.
\end{align*}
We claim that $I_{1,2}^{\varepsilon}$ is uniformly bounded in $\varepsilon$.
Indeed, first of all we observe that, thanks to 
\eqref{normalderiv} and Lemma \ref{Lip2},
\begin{align*}
&| f (z) - \partial_y u (z, 0) | 
= g' (0^+) +  \partial_y u (z, 0)  \leq  
g' (0^+) +  g' (2 u (z, 0)) \\
&\leq 2 \big( \, g' (0^+) + \| g '' \|_{L^{\infty}} u (z, 0) \, \big)
\leq 2 \left( g' (0^+) + r \frac{L_A \| g '' \|_{L^{\infty}}}{1-2A\| g '' \|_{L^{\infty}}} \right)
 =: c_r,
\end{align*}
for every $z \in B^n_r \cap \{ u \geq 0 \}$. Then, recalling \eqref{xd}, we have
\begin{align*}
| I_{1,2}^{\varepsilon} | &\leq c_r  
\int_{B^n_r \cap \{ u \geq 0 \} \setminus B^n_{|x_b|} (x_b)} \frac{1}
{|w_{\varepsilon} - z|^{n}} \, d z
= c_r   \int_{B^n_r \setminus  ( B^n_{|x_b|} (x_b) \cup B^n_{|x_d|} (x_d)) } 
\frac{1} {|w_{\varepsilon} - z|^{n}} \, d z \\
&= c_r  \int_0^r \int_{\mathbb{S}^{n-1} 
\cap \{ \omega \in \mathbb{S}^{n-1} \colon - \rho \, \overline{C} / d < \omega_1 < \rho \overline{D} / b \}}
\frac{\rho^{n-1}}{|w_{\varepsilon} - \rho \omega |^{n}} \, d \mathcal{H}^{n-1} (\omega) \, d \rho \\
&\leq c_r  \int_0^r \int_{\Sigma_{\rho}}
\frac{\rho^{n-1}}{|w_{\varepsilon} - \rho \omega |^{n}} \, d \mathcal{H}^{n-1} (\omega) \, d \rho,
\end{align*}
where we set
$$
\Sigma_{\rho} : = \left\{ \omega \in \mathbb{S}^{n-1} : |\omega_1| < \frac{\rho}{\lambda} \right \},
$$
and $\lambda$ is defined by \eqref{def lambda}.
Since $w_{\varepsilon}:= - \varepsilon \mathbf{e}_1$, we note that for every $\rho \in (0,r)$ and $\omega \in \Sigma_{\rho}$ 
$$
|w_{\varepsilon} - \rho \, \omega |^2 =  \rho^2  + 2 \varepsilon \rho \, \omega_1 + \varepsilon^2 
> \rho^2  - 2 \varepsilon \rho^2 \, \lambda+ \varepsilon^2
= \rho^2  \left( 1 -  2 \varepsilon \lambda \right) + \varepsilon^2 > \frac{1}{2} \rho^2,
$$
for $\varepsilon$ sufficiently small.
Therefore, for $\varepsilon$ sufficiently small we obtain
\begin{align*}
| I_{1,2}^{\varepsilon} | 
&\leq 2^{\frac{n}{2}} c_r  \int_0^r \int_{\Sigma_{\rho}}
\frac{1}{\rho} \, d \mathcal{H}^{n-1} (\omega) \, d \rho
\leq 2^{\frac{n}{2}} c_r \,  C,
\end{align*}
where we used the fact that $\mathcal{H}^{n-1} (\Sigma_{\rho}) \leq C \rho$ 
for some positive constant $C = C (n)$.

Let us now estimate $I_1^{\varepsilon}$.
Since $z_1 > 0$ for every $z \in B^n_r \cap B^n_{|x_b|} (x_b)$ 
and $(w_{\varepsilon})_1 = - \varepsilon < 0$, we have 
$(w_{\varepsilon})_1 -  z_1 < 0$. Therefore, since $g'>0$,
\begin{align*}
I_{1,1}^{\varepsilon}& = \int_{B^n_r \cap B^n_{|x_b|} (x_b)} \frac{(w_{\varepsilon})_1 -  z_1}
{|w_{\varepsilon} - z|^{n+1}} \left( f (z) - \partial_y u (z, 0) \right) \, d z \\
&= \int_{B^n_r \cap B^n_{|x_b|} (x_b)} \frac{z_1 - (w_{\varepsilon})_1 }
{|w_{\varepsilon} - z|^{n+1}} \left( g' (0^+) + g' (2 u (z, 0) \right) \, d z \\
&\geq g' (0^+) \int_{B^n_r \cap B^n_{|x_b|} (x_b)} \frac{z_1 - (w_{\varepsilon})_1 }
{|w_{\varepsilon} - z|^{n+1}} \, d z 
\geq g' (0^+) \int_{B^n_{r- \varepsilon} (w_{\varepsilon}) \cap B^n_{|x_b|} (x_b)} \frac{z_1 - (w_{\varepsilon})_1 }
{|w_{\varepsilon} - z|^{n+1}} \, d z \\
&= g' (0^+) \int_{B^n_{r- \varepsilon} \cap B^n_{|x_b|} (x_b - w_{\varepsilon})} \frac{\tau_1}
{|\tau|^{n+1}} \, d \tau 
= g' (0^+) \int_{\varepsilon}^{r - \varepsilon} \frac{1}{\rho} 
\int_{\Sigma^{\varepsilon}_{\rho}} \omega_1 \, d \mathcal{H}^{n-1} (\omega) \, d \rho,
\end{align*}
where 
$$
\Sigma^{\varepsilon}_{\rho} := \left\{ \omega \in \mathbb{S}^{n-1}: 
\frac{\overline{D} (\rho^2 + \varepsilon^2) + \varepsilon b}{\rho (b + 2 \varepsilon \overline{D})}
< \omega_1 < 1  
\right\}.
$$
Note now that, for $\varepsilon$ sufficiently small, 
since $\rho < r - \varepsilon < \lambda / 4 \leq b/(4\overline{D})$, we have 
$$
\frac{\overline{D} (\rho^2 + \varepsilon^2) + \varepsilon b}{\rho (b + 2 \varepsilon \overline{D})} 
< 2 \frac{\overline{D} \rho}{b} < \frac{1}{2}.
$$
Therefore, 
\begin{align*}
I_{1,1}^{\varepsilon}& \geq g' (0^+) \int_{\varepsilon}^{r - \varepsilon} \frac{1}{\rho} 
\int_{\mathbb{S}^{n-1} \cap \{ 1/2 < \omega_1 < 1 \}} \omega_1 \, d \mathcal{H}^{n-1} (\omega) \, d \rho \\
&= c_n \, g' (0^+) \int_{\varepsilon}^{r - \varepsilon} \frac{1}{\rho} \, d \rho 
= c_n \, g' (0^+) \ln \left( \frac{r - \varepsilon}{\varepsilon} \right),
\end{align*}
for some positive dimensional constant $c_n$.
Taking the limit as $\varepsilon \to 0^+$ we obtain
$$
\lim_{\varepsilon \to 0^+} I_{1,1}^{\varepsilon} = + \infty, 
$$
which proves \eqref{unbounded gradient}.
As noted before, this contradicts Lemma \ref{Lip2}, concluding the proof of Step~1.

\vspace{.2cm}

\noindent
\textbf{Step 2.} We show that there exist positive constants $\gamma$, $\eta,$ and $\overline{r}$ such that
\begin{equation} \label{soon a contradiction}
\partial_y u \geq \frac{3}{4} g' (0^+) 
\qquad \text{ in } R_{\gamma \overline{r}} 
:= B_{\gamma \overline{r}}^n \times \left[ (1 - \gamma ) \eta \overline{r} , (1 + \gamma ) \eta \overline{r} \right].
\end{equation}
By Step 1 we know that $u (\cdot, 0)$ is differentiable at $x=0$ with $\nabla_x u (0,0) = 0$,
hence there exists a continuous function $\sigma: [0, \infty) \to [0, \infty)$ 
with $\sigma (0) = 0$ such that 
$$
| u (x , 0) | \leq \sigma (|x|) |x| \qquad \text{ for every } x \in \R^n.
$$
Note that, with no loss of generality, we can assume that $\sigma(r)\geq r$ for all $r$.

Let $M, C, \eta$ be positive constants to be chosen later, and for every $r \in (0,1)$ set
$$
\Gamma_r := B_r^n \times \left[ 0, \eta r \right].
$$
We consider the harmonic function $V^+: \Gamma_r \to \mathbb{R}$ defined as
$$
V^+ (x, y):= u (x, y) - M \frac{\sigma (r)}{r} |x - x^+_0|^2 + n M \frac{\sigma (r)}{r} y^2 - (g' (0^+) - C r) y,
$$
where $x^+_0 \in \mathbb{R}^n$ is such that $u (x^+_0 , 0) > 0$ and $|x^+_0| = c_0 r$
with $0 < c_0 \ll 1$ (note that that such a point $x^+_0$ exists, because we are assuming, by contradiction,
that $(0,0)$ is a boundary point both for $\{ u > 0\}$ and for $\{ u < 0\}$).
Since $V^+$ is harmonic, we have 
$$
0 < \max_{\overline{\Gamma}_r} V^+ = \max_{\partial \Gamma_r } V^+, 
$$
where the positivity comes from the fact that $V^+ (x^+_0,0) = u (x^+_0,0) > 0$.
We now have several possibilities.
\vspace{.2cm}

\noindent
\textbf{Case 2a:} We show that, if $C$ is sufficiently large, then 
there exists no $\overline{x} \in \partial \Gamma_r \cap \{ y = 0\}$ such that
$$
\max_{\overline{\Gamma}_r} V^+ = V^+ (\overline{x}, 0).
$$
Suppose that such $\overline{x}$ exists.
Then, it cannot be $u (\overline{x}, 0) \leq 0$, since in that case 
$$
V^+ (\overline{x}, 0) = u (\overline{x}, 0) - M \frac{\sigma (r)}{r} | \overline{x} - x^+_0|^2 \leq 0.
$$
Therefore, $u (\overline{x}, 0) > 0$, and 
\begin{align*}
\partial_y V^+ (\overline{x}, 0) &= \partial_y u (\overline{x}, 0) -  g' (0^+) + C r
= g' (2 u (\overline{x}, 0)) - g' (0^+) + C r \\
&\geq - 2 u (\overline{x}, 0) \| g'' \|_{L^{\infty}} + C r 
\geq - 2 \sigma (r) \, r \| g'' \|_{L^{\infty}} + C r. 
\end{align*}
If we choose $C$ large enough we obtain $\partial_y V^+ (\overline{x}, 0) > 0$, which is impossible.

\vspace{.2cm}

\noindent
\textbf{Case 2b:} We show that, if $M$ is sufficiently large and $\eta$ is sufficiently small, then 
there exists no point $(\overline{x}, \overline{y})$ with $|\overline{x}| = r$
and $\overline{y} \in [0, \eta r]$ such that
$$
\max_{\overline{\Gamma}_r} V^+ = V^+ (\overline{x}, \overline{y}).
$$
Indeed, suppose that such a point $(\overline{x}, \overline{y})$ exists. 
Then, since $|\overline{x}| = r$ and $|x_0^+ | = c_0 r$, we have
$$
(1 - c_0)^2 r^2 \leq |\overline{x} - x^+_0|^2 \leq (1 + c_0)^2 r^2.
$$
Thus,

\begin{align*}
0 &< V^+ (\overline{x}, \overline{y}) 
= u (\overline{x}, \overline{y}) - M \frac{\sigma (r)}{r} |\overline{x} - x^+_0|^2 
+ n M \frac{\sigma (r)}{r} \overline{y}^2 - (g' (0^+) - C r) \overline{y} \\
&\leq u  (\overline{x}, \overline{y}) - M \,  (1 - c_0)^2 \sigma (r) r
+ n M \frac{\sigma (r)}{r} \overline{y}^2 - (g' (0^+) - C r) \overline{y}, 
\end{align*}
so that (recall that $c_0\ll 1$)
\begin{align}
u  (\overline{x}, \overline{y}) &> 
M \,  (1 - c_0)^2 \sigma (r) r - n M \frac{\sigma (r)}{r} \overline{y}^2 + (g' (0^+) - C r) \overline{y} \nonumber \\
&\geq M \,  (1 - c_0)^2 \sigma (r) \, r - ( n M \, \eta^2 \, \sigma (r) \, r + C \eta r^2 ) + g' (0^+) \overline{y} \nonumber \\
&\geq \frac{M}{2} \sigma (r) \, r + g' (0^+) \overline{y}, \nonumber 
\end{align}
for $\eta$ small enough.
Thanks to \eqref{bound partial_y}, this last estimate gives
$$
\frac{M}{2} \sigma (r) \, r + g' (0^+) \overline{y} 
\leq u  (\overline{x}, \overline{y}) 
\leq u  (\overline{x}, 0)
+ g' (0^+) \, \overline{y} + C_1 \, \frac{\overline{y}^2}{2} 
\leq \sigma (r) \, r + C_1 \, \eta^2 \frac{r^2}{2} 
+ g' (0^+) \, \overline{y}, 
$$
which is impossible for $M$ sufficiently large.

\vspace{.2cm}

\noindent
\textbf{Case 2c:} We show that, if $M$ is sufficiently large and $\eta$ is sufficiently small, then 
there exists no point $\overline{x} \in \mathbb{R}^n$ with $r/2 \leq |\overline{x}| \leq r$ such that
$$
\max_{\overline{\Gamma}_r} V^+ = V^+ (\overline{x}, \eta r).
$$
This case can be treated as Case 2b.

\vspace{.2cm}

\noindent
\textbf{Case 2d:} We show that, if $M$ is sufficiently large and $\eta$ is sufficiently small, 
there exist $\gamma, \overline{r} > 0$ such that \eqref{soon a contradiction} is satisfied.
From Cases 2a--2c, there exists $\overline{x} \in \mathbb{R}^n$ with $|\overline{x}| < r/2$ such that
$$
\max_{\overline{\Gamma}_r} V^+ = V^+ (\overline{x}, \eta r),
$$
so that
\begin{align*}
0 &\leq \partial_y V^+ (\overline{x}, \eta r) = 
\partial_y u (\overline{x}, \eta r) +2 n M  \eta \, \sigma (r) - (g' (0^+) - C r).
\end{align*}
Using the fact that $r \leq \sigma (r)$ we have
\begin{equation} \label{estimatedu}
\partial_y u (\overline{x}, \eta r)  \geq  
g' (0^+) - 2 n M  \eta \, \sigma (r) - C r \geq g' (0^+) - C_{\eta} \, \sigma ( r ),
\end{equation}
where $C_{\eta} = 2 n M \eta  + C$.
For $\gamma \in (1/2,1)$ let us set
$$
(\overline{x}, \eta r)  \in R_{\gamma r}:= B_{\gamma r}^n \times \left[ (1 - \gamma ) \eta r , (1 + \gamma ) \eta r \right]. 
$$
Thanks to \eqref{bound partial_y}, the function 
$g' (0^+) + 2 C_1 \eta r - \partial_y u$ is harmonic and nonnegative in $\Gamma_{2 r}$.
Thus, by \eqref{estimatedu} and Harnack inequality, there exists a constant $C_{\gamma} > 0$ such that
\begin{multline*}
\sup_{R_{\gamma r} } \left(  g' (0^+) + 2 C_1 \eta r - \partial_y u \right)
\leq C_{\gamma} \inf_{R_{\gamma r}} \left(  g' (0^+) + 2 C_1 \eta r - \partial_y u \right) \\
\leq  C_{\gamma} \left(  g' (0^+) + 2 C_1 \eta r  - (g' (0^+) - C_{\eta} \, \sigma ( r ))   \right)
= C_{\gamma} ( 2 C_1 \eta r + C_{\eta} \, \sigma ( r ) ).
\end{multline*}
From the previous chain of inequalities we obtain
$$
\partial_y u 
\geq g' (0^+) + 2 C_1 (1 - C_{\gamma}) \eta r  - C_{\gamma} C_{\eta} \, \sigma ( r )
\qquad \text{ in } R_{\gamma r}.
$$
Therefore, there exists $\overline{r} = \overline{r}  (\gamma)$ such that
$$
\partial_y u \geq \frac{3}{4} g' (0^+) 
\qquad \text{ in } R_{\gamma \overline{r}},
$$ 
thus showing \eqref{soon a contradiction}.
\vspace{.2cm}

\noindent
\textbf{Step 3.} We conclude. An argument analogous to that one 
used in Step 2 can be applied to the harmonic function 
$V^-: \Gamma_r \to \mathbb{R}$ defined as
$$
V^- (x, y):= u (x, y) + M \frac{\sigma (r)}{r} |x - x^-_0|^2 - n M \frac{\sigma (r)}{r} y^2 + (g' (0^+) - C r) y,
$$
where $x^-_0 \in \mathbb{R}^n$ is such that $u (x^-_0 , 0) < 0$ 
and $|x^-_0| = c^-_0 r$ with $0 < c^-_0 \ll 1$, to obtain that
$$
\partial_y u \leq - \frac{3}{4} g' (0^+) 
\qquad \text{ in } R_{\gamma \overline{r}}.
$$ 
Comparing the inequality
above to \eqref{soon a contradiction}, we obtain the desired contradiction.
\end{proof}

\begin{section}{Frequency Formula}

In this section we prove a frequency formula, 
which will allow us to study the blow up profiles of solutions $u$ of \eqref{defcritptn}.
To this purpose, assuming that $(0, 0)$ is a free boundary point for $u$,
and that $u (x, 0) \geq 0$ in a neighborhood of $0$ (cf. Proposition \ref{phaseseparation}), 
we investigate the regularity properties of 
the function $v: \mathbb{R}^n \times [-A,A] \to \mathbb{R}$ defined by \eqref{v intro}. 

Throughout this section we assume that the hypotheses of Theorem~\ref{opt_reg_theorem}
are satisfied, that $v$ is given by \eqref{v intro} where $u$ is a solution of \eqref{defcritptn},
that $(0, 0)$ is a free boundary point for $v$,
and that $v (x, 0) \geq 0$ for every $x \in B^n_{r_0}$, 
where $r_0$ is given by Proposition~\ref{phaseseparation}. 
Therefore, $v$ satisfies:
\begin{equation*} 
\begin{cases}
\Delta v = 0  \, &\textnormal{ in } \, B_{r_0} \setminus \{ y = 0 \}, \\
v \geq 0 \, 
&\textnormal{ on }\, B_{r_0}^n, \\
\partial_{y} v \leq g' (2 v) - g' (0^+) \, 
&\textnormal{ on }\, B_{r_0}^n, \\
\partial_{y} v = g' (2 v) - g' (0^+) 
\, &\textnormal{ on }\, B_{r_0}^n \cap \{ v > 0 \}.
\end{cases}
\end{equation*}
Since $v$ is even with respect to the hyperplane $\{ y = 0 \}$, we have
\begin{equation} \label{symmetry of v}
v_{\scriptscriptstyle RT} = v_{\scriptscriptstyle LT} \quad \text{ and } 
\quad - \frac{\partial v_{\scriptscriptstyle LT}}{\partial y}
= \frac{\partial v_{\scriptscriptstyle RT}}{\partial y} \leq 0 \qquad \text{ on } B^n_{r_0}. 
\end{equation}
First of all we observe that
\[
\partial_{y} v (x, 0 )= g' (2 v (x, 0 )) - g' (0^+)
\leq 2 \| g '' \| v (x, 0 ) \leq \frac{2 \| g '' \| L_A}{1-2A\| g '' \|_{L^{\infty}}} r_0 = : C_0,
\]
for every $x \in B^n_{r_0}$, 
where we used the definition of $v$, Lemma~\ref{Lip2}, 
and the fact that $(0,0)$ is a free boundary point.
From the above inequality it follows that the function
\begin{equation} \label{def v tilde}
\widetilde{v} (x, y):= v (x, y) - C_0 |y| 
\end{equation}
is superharmonic in $B_{r_0}$.
Indeed, let $\varphi \in C^{\infty}_c (B_{r_0})$ with $\varphi \geq 0$.
Then, using the fact that $\widetilde{v}$ is harmonic in $B_{r_0} \setminus \{ y = 0\}$
\begin{align*}
&\int_{B_{r_0}} \widetilde{v} \Delta \varphi \, dz 
= \int_{B_{r_0} \cap \{ y > 0\}} \widetilde{v}
 \Delta \varphi \, dz + \int_{B_{r_0} \cap \{ y < 0\}} 
 \widetilde{v} \Delta \varphi \, dz\\
&= \int_{B_{r_0} \cap \{ y > 0\}} \text{div} ( 
\widetilde{v} \, \nabla \varphi ) \, dz 
+ \int_{B_{r_0} \cap \{ y > 0\}} \text{div} ( \widetilde{v} \,  \nabla \varphi ) \, dz \\
&- \int_{B_{r_0} \cap \{ y > 0\}} \nabla \widetilde{v} \cdot \nabla \varphi \, dz 
- \int_{B_{r_0} \cap \{ y < 0\}} \nabla \widetilde{v} \cdot \nabla \varphi \, dz\\
&= \int_{B_{r_0}^n} ( \widetilde{v}_{\scriptscriptstyle LT} 
- \widetilde{v}_{\scriptscriptstyle RT} ) \frac{\partial \varphi}{\partial y} \, d \mathcal{H}^n
- \int_{B_{r_0} \cap \{ y > 0\}} \text{div} (  \varphi \nabla \widetilde{v} ) \, dz 
- \int_{B_{r_0} \cap \{ y < 0\}} \text{div} (  \varphi \nabla \widetilde{v} ) \, dz \\
&= \int_{B^n_{r_0}} \varphi \left( \frac{\partial \widetilde{v}_{\scriptscriptstyle RT}}{\partial y} - \frac{\partial \widetilde{v}_{\scriptscriptstyle LT}}{\partial y}   \right) 
= 2  \int_{B^n_{r_0}} \varphi \frac{\partial \widetilde{v}_{\scriptscriptstyle RT}}{\partial y} \, d \mathcal{H}^n \leq 0,
\end{align*}
where we used \eqref{symmetry of v}.
We can now state the main result of the section.
\begin{proposition} \label{9}
Let $F_v : (0, \infty) \to [0, \infty)$ be given by
$$
F_v (r):= \int_{\partial B_r} v^2 \, d \mathcal{H}^n,
$$
let $r_0$ be given by Proposition~\ref{phaseseparation}, and set
\begin{equation*}
\Phi_v (r):= 
r \frac{d}{dr} \log \left( \max\{ F_v (r) , r^{n+4} \} \right).
\end{equation*}
Then, there exist $0 < \overline{r}_0 \leq r_0$, and a positive constant $C$, 
such that the function $r \mapsto \Phi_v (r) e^{C r}$
is monotone nondecreasing for $r \in (0,\overline{r}_0)$.
In particular, there exists
$$
\Phi_v (0^+) = \lim_{r \to 0^+} \Phi_v (r).
$$
\end{proposition}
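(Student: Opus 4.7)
The plan is to adapt the classical Almgren frequency argument, in the variant used by Caffarelli--Salsa--Silvestre \cite{CSS}, to the present setting where $v$ satisfies a Signorini--like condition with a nonlinear perturbation on $\{y=0\}$. Set
$$H(r) := F_v(r) = \int_{\partial B_r} v^2\, d\mathcal{H}^n, \qquad D(r) := \int_{B_r} |\nabla v|^2\, dz.$$
On the open set where $H(r) \leq r^{n+4}$ one has $\Phi_v \equiv n+4$ and monotonicity is automatic, so it suffices to establish monotonicity (up to the factor $e^{Cr}$) on the set $\{H(r) > r^{n+4}\}$, where $\Phi_v(r) = rH'(r)/H(r)$, and then to patch across the transition points.

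The first step is to derive the two standard identities. By rescaling to the unit sphere,
$$rH'(r) = nH(r) + 2r\int_{\partial B_r} v\, v_\nu\, d\mathcal{H}^n.$$
Applying the divergence theorem on each half-ball $B_r\cap\{\pm y>0\}$, on which $v$ is harmonic, and using the even symmetry of $v$ in $y$ yields
$$\int_{\partial B_r} v\, v_\nu\, d\mathcal{H}^n = D(r) + 2\int_{B^n_r} v_{\scriptscriptstyle RT}\, \partial_y v_{\scriptscriptstyle RT}\, dx.$$
Next, a Rellich--Pohozaev identity applied separately on each half-ball (where $(z\cdot\nu)=0$ on the flat interface, killing the normal--gradient contributions there) gives
$$rD'(r) = (n-1)D(r) + 2r\int_{\partial B_r} v_\nu^2\, d\mathcal{H}^n - 4\int_{B^n_r}(x\cdot\nabla_x v_{\scriptscriptstyle RT})\, \partial_y v_{\scriptscriptstyle RT}\, dx.$$
The nonlinear boundary condition $\partial_y v_{\scriptscriptstyle RT} = g'(2v) - g'(0^+)$ on $\{v>0\}$, together with $g\in C^2$, yields the crucial \emph{quadratic} smallness
$$|\partial_y v_{\scriptscriptstyle RT}| \leq 2\|g''\|_{L^\infty}\, v\quad\text{on }\, B^n_{r_0}\cap\{v>0\},$$
which, combined with the Lipschitz regularity of $v$ inherited from Lemma~\ref{Lip2}, bounds the two flat--boundary integrals above by $Cr(H(r)+D(r))$.

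With these identities in hand, on $\{H(r)>r^{n+4}\}$ I would compute
$$\frac{d}{dr}\log\Phi_v(r) = \frac{(rH')'}{rH'} - \frac{H'}{H},$$
expand using the formulas above, and observe that the leading (unperturbed) contribution is proportional to
$$H(r)\int_{\partial B_r} v_\nu^2\, d\mathcal{H}^n - \Bigl(\int_{\partial B_r} v\, v_\nu\, d\mathcal{H}^n\Bigr)^2 \geq 0,$$
which is nonnegative by Cauchy--Schwarz on the sphere $\partial B_r$. The residual terms, coming from the two flat--boundary error integrals and their $r$--derivatives, are bounded in absolute value by $C\,\Phi_v(r)$ thanks to the estimates above and to the lower bound $H(r)\geq r^{n+4}$ enforced by the $\max$. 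This yields a differential inequality of the form $\Phi_v'(r) \geq -C\,\Phi_v(r)$, which integrates to the desired monotonicity of $r\mapsto\Phi_v(r)e^{Cr}$ on the open set $\{H>r^{n+4}\}$. Across transition points $r_*$, $\log\max\{H(r),r^{n+4}\}$ is a maximum of two $C^1$ functions, whose right--derivative is the larger of the two; using $H(r_*)=r_*^{n+4}$ one checks that $\Phi_v$ is lower semicontinuous at $r_*$ and that the monotonicity extends to all of $(0,\overline{r}_0)$. Existence of $\Phi_v(0^+)$ then follows.

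The main obstacle is the quantitative control of the error terms arising from the nonlinear boundary condition: they must remain subcritical with respect to the Cauchy--Schwarz term, which is possible only because the concavity of $g$ yields the quadratic estimate $|v\,\partial_y v|\leq Cv^2$ (rather than merely a linear bound on $\partial_y v$). This quadratic smallness, together with the truncation by $r^{n+4}$ that prevents $H$ from vanishing faster than quartic, is precisely what allows the error to be absorbed into the integrating factor $e^{Cr}$.
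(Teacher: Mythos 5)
Your overall strategy mirrors the paper's: work on the open set where $F_v(r)>r^{n+4}$, use the Rellich--Pohozaev identity together with the divergence theorem to express $\Phi_v'(r)$ as a nonnegative Cauchy--Schwarz term plus boundary errors, and then absorb the errors into the integrating factor $e^{Cr}$. The difficulty, however, is hiding precisely in the step you dismiss as a routine estimate.

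The gap is the claim that the two flat-boundary error integrals are bounded by $Cr\,(H(r)+D(r))$. This is fine for the first integral: using $|\partial_y v|\leq 2\|g''\|_{L^\infty} v$ on $\{v>0\}$ and the weighted trace inequality $\int_{B^n_r}v^2\,dx\leq CrD(r)$ (which is where the superharmonicity of $v$ is crucial, cf.\ Lemma~\ref{10}), one gets $\left|\int_{B^n_r}v\,\partial_y v\,dx\right|\leq Cr D(r)$. But the Rellich error term $\int_{B^n_r}(x\cdot\nabla_x v)\,\partial_y v\,dx$ does not obey such a bound. The tangential gradient $\nabla_x v$ restricted to $\{y=0\}$ is not controlled in $L^2(B^n_r)$ by $D(r)$ --- the trace of an $H^1$ function is only in $H^{1/2}$, and its tangential gradient is only in $H^{-1/2}$ --- and even using the pointwise $C^{1,1/2}$ regularity $|v|\leq Cr^{3/2}$, $|\nabla v|\leq Cr^{1/2}$ one only obtains $\left|\frac{4}{r}\int_{B^n_r}(x\cdot\nabla_x v)\,\partial_y v\,dx\right|\leq Cr^{n+2}$, while Lemma~\ref{8} gives merely $\int_{\partial B_r}v\,v_\nu\,d\mathcal{H}^n\geq Cr^{n+3}$. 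So the relative error is of order $1/r$, which explodes as $r\to 0^+$ and cannot be absorbed into $e^{Cr}$. A bound of the form $Cr(H+D)$ would require $D(r)\gtrsim r^{n+1}$, which is not guaranteed.

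What saves the day in the paper's proof (Step 2) is an integration by parts that exposes a \emph{cancellation}, not mere quadratic smallness. After integrating $I_3$ by parts, the surviving combination is
\[
\frac{2}{r}\int_{B^n_r}(x\cdot\nabla_x v)\Bigl[2v\,g''(2v)-\bigl(g'(2v)-g'(0^+)\bigr)\Bigr]\,dx,
\]
and the bracket is $O(\|g'''\|_{L^\infty}\,v^2)$, one order smaller than either piece alone. This reduces the error to $O(r^{\,n+7/2})$, hence the relative error is $O(r^{1/2})\to 0$. Two ingredients you did not invoke are therefore essential: (i) the $C^3$ regularity of $g$ (the paper explicitly assumes $\|g'''\|_{L^\infty}<\infty$), and (ii) the optimal $C^{1,1/2}$ regularity of $v$ from Theorem~\ref{opt_reg_theorem}, not merely the Lipschitz bound from Lemma~\ref{Lip2}. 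Without the cancellation, the differential inequality $\Phi_v'\geq -C\Phi_v$ fails, and the proposed proof does not close.
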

Before giving the proof, we need several auxiliary lemmas.
When integrating along the boundary of a smooth $(n+1)$-dimensional set, we will 
denote by $\nu$ the outer unit normal, and by $v_{\nu}$ the derivative of $v$ 
along $\nu$. We will denote the tangential gradient of $v$ by $\nabla_{\tau} v$, 
so that $\nabla_{\tau} v = \nabla v - v_{\nu} \nu$.

The next lemma is an adaptation of \cite[Lemma 7.8]{CSS}.
\begin{lemma} \label{5}
For every $r \in (0, r_0)$
\begin{align*}
(n-1) \int_{B_r} |\nabla v|^2 \, dz = r \int_{\partial B_r} 
\left[ |\nabla_{\tau} v |^2 - v_{\nu}^2 \right] \, d \mathcal{H}^n
+ 4 \int_{B^n_r} ( g' (2 v) - g' (0^+) )(x \cdot \nabla_{\tau} v) \, d x.
\end{align*}

\end{lemma}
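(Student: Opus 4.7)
The plan is to prove this as a Rellich--Pohozaev-type identity: multiply the equation $\Delta v = 0$ (valid classically in $B_r\setminus\{y=0\}$, since $v$ is harmonic there) by the radial multiplier $z\cdot\nabla v$ and integrate separately over the two half-balls $B_r^\pm := B_r\cap\{\pm y>0\}$. Their shared flat boundary $D_r := B^n_r\times\{0\}$ is precisely what will generate the integral involving $\partial_y v$.

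Working on $B_r^+$, write $(z\cdot\nabla v)\Delta v = \operatorname{div}((z\cdot\nabla v)\nabla v) - \nabla v\cdot\nabla(z\cdot\nabla v)$, and use the pointwise identity $\nabla v\cdot\nabla(z\cdot\nabla v) = |\nabla v|^2 + \tfrac12\, z\cdot\nabla|\nabla v|^2$. Performing a second integration by parts on the last term, using $\operatorname{div} z = n+1$, the vanishing integral $\int_{B_r^+}(z\cdot\nabla v)\Delta v$ rearranges into
\begin{equation*}
\tfrac{n-1}{2}\int_{B_r^+}|\nabla v|^2\,dz \;=\; \int_{\partial B_r^+}\Bigl[\tfrac12(z\cdot\nu)|\nabla v|^2 - v_\nu(z\cdot\nabla v)\Bigr]\,d\mathcal H^n.
\end{equation*}
On the spherical cap $S_r^+ = \partial B_r\cap\{y>0\}$ one has $\nu = z/r$, so $z\cdot\nu = r$ and $z\cdot\nabla v = rv_\nu$, giving $\tfrac r2(|\nabla_\tau v|^2 - v_\nu^2)$. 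On $D_r$ one has $\nu = -\mathbf e_{n+1}$, so $z\cdot\nu = 0$, while $v_\nu = -\partial_y v_{\scriptscriptstyle RT}$ and $z\cdot\nabla v = x\cdot\nabla_x v_{\scriptscriptstyle RT}$ (since $z = (x,0)$ on $D_r$), producing the flat-boundary contribution $\partial_y v_{\scriptscriptstyle RT}(x\cdot\nabla_x v_{\scriptscriptstyle RT})$.

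The analogous identity on $B_r^-$ yields the same contribution from $D_r$: by the evenness \eqref{symmetry of v}, $\partial_y v_{\scriptscriptstyle LT} = -\partial_y v_{\scriptscriptstyle RT}$ and $\nabla_x v_{\scriptscriptstyle LT} = \nabla_x v_{\scriptscriptstyle RT}$, so the two signs compensate and the integrand becomes again $\partial_y v_{\scriptscriptstyle RT}(x\cdot\nabla_x v_{\scriptscriptstyle RT})$. Adding the two identities and multiplying by two gives
\begin{equation*}
(n-1)\int_{B_r}|\nabla v|^2\,dz = r\int_{\partial B_r}\bigl[|\nabla_\tau v|^2 - v_\nu^2\bigr]\,d\mathcal H^n + 4\int_{B_r^n}\partial_y v_{\scriptscriptstyle RT}(x\cdot\nabla_x v_{\scriptscriptstyle RT})\,dx.
\end{equation*}
Finally I replace $\partial_y v_{\scriptscriptstyle RT}(x\cdot\nabla_x v_{\scriptscriptstyle RT})$ by $(g'(2v) - g'(0^+))(x\cdot\nabla_\tau v)$ on $B^n_r$, which holds a.e.: on $\{v>0\}$ it is the boundary condition satisfied by $v$, and on $\{v=0\}$ it is trivial since $\nabla_x v = 0$ a.e.\ on that set (as $v\geq 0$ attains its minimum there).

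The only non-cosmetic difficulty is justifying the second integration by parts, which formally requires $D^2 v$ and hence is singular across $\{y=0\}$. I handle this by doing the entire computation on $B_r^+\cap\{y>\varepsilon\}$, where $v$ is $C^\infty$, and then letting $\varepsilon\to 0^+$: the interior terms pass to the limit by dominated convergence, and the traces on $\{y=\varepsilon\}$ converge to the corresponding traces on $D_r$ thanks to the $C^{1,1/2}$-regularity of $v$ up to the flat boundary provided by Theorem~\ref{opt_reg_theorem}. This is the main technical point; the rest is direct integration by parts.
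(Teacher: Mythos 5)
Your argument is correct and is essentially the paper's proof in disguise: the paper's proof starts from the pointwise divergence identity $\operatorname{div}\!\bigl[|\nabla v|^2 z - 2(z\cdot\nabla v)\nabla v\bigr] = (n-1)|\nabla v|^2$ and applies the divergence theorem on each half-ball, which is exactly what your Rellich multiplier computation (multiply by $z\cdot\nabla v$ and integrate by parts twice) produces after regrouping. The extra $\varepsilon$-truncation you add to justify the integration by parts, and the observation that $\nabla_x v = 0$ a.e.\ on $\{v=0\}$ so that $\partial_y v_{\scriptscriptstyle RT}\,(x\cdot\nabla_x v)$ may be replaced by $(g'(2v)-g'(0^+))(x\cdot\nabla_x v)$ there, are careful refinements of points the paper leaves implicit, not a different route.
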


\begin{proof}
Since $\Delta v=0$ in $B_{r_0} \setminus \{ y = 0 \}$, there we have
\begin{align}
&\text{div} \left[ |\nabla v|^2 z - 2 (z \cdot \nabla v) \nabla v \right] \nonumber \\
&=  (n+1) |\nabla v|^2 + 2 (D^2 v \cdot \nabla v) \cdot z - 2 (z \cdot \nabla v) \Delta v
- 2 \nabla v \cdot \left( \nabla v + (D^2 v \cdot z) \right) \nonumber \\
&=  (n+1) |\nabla v|^2 - 2 |\nabla v|^2  \nonumber
= (n-1) |\nabla v|^2.
\end{align}
Then, 
\begin{align}
& (n-1) \int_{B_r} |\nabla v|^2 \, dz 
= \int_{B_r \cap \{ y > 0\} } \text{div} \left[ |\nabla v|^2 z - 2 (z \cdot \nabla v) \nabla v \right] \, dz  \nonumber \\
&+ \int_{B_r \cap \{ y < 0\} } \text{div} \left[ |\nabla v|^2 z - 2 (z \cdot \nabla v) \nabla v \right] \, dz \nonumber \\
&= \int_{ \partial (B_r \cap \{ y > 0\}) } 
\left[ |\nabla v|^2 (z \cdot \nu) - 2 (z \cdot \nabla v) (\nabla v \cdot \nu) \right] \, d \mathcal{H}^n \nonumber \\
&+ \int_{ \partial (B_r \cap \{ y < 0\}) } 
\left[ |\nabla v|^2 (z \cdot \nu) - 2 (z \cdot \nabla v) (\nabla v \cdot \nu) \right] \, d \mathcal{H}^n. \label{rel1}
\end{align}
Recalling that $z = r \nu$ on $\partial B_r$,
we get 
\begin{align}
&\int_{ \partial (B_r \cap \{ y > 0\}) }  
\left[ |\nabla v|^2 (z \cdot \nu) - 2 (z \cdot \nabla v) (\nabla v \cdot \nu) \right] \, d \mathcal{H}^n \nonumber \\
&= \int_{\partial B_r \cap \{ y > 0\}} 
\left[ |\nabla v|^2 (z \cdot \nu) - 2 (z \cdot \nabla v) (\nabla v \cdot \nu) \right] \, d \mathcal{H}^n \nonumber \\
&\qquad+ \int_{B^n_r} 
\left[ |\nabla v_{\scriptscriptstyle RT}|^2 (x \cdot \nu) - 2 (x \cdot \nabla v_{\scriptscriptstyle RT}) (\nabla v_{\scriptscriptstyle RT} \cdot \nu) \right] \, d x \nonumber \\
&= r \int_{\partial B_r \cap \{ y > 0\}} \left[ |\nabla v|^2 - 2 v_{\nu}^2 \right] \, d \mathcal{H}^n
+ \int_{B^n_r} 
\left[ |\nabla v_{\scriptscriptstyle RT}|^2 (x \cdot \nu) - 2 (x \cdot \nabla v_{\scriptscriptstyle RT}) (\nabla v_{\scriptscriptstyle RT} \cdot \nu) \right] \, d x \nonumber \\
&= r \int_{\partial B_r \cap \{ y > 0\}} \left[ | \nabla_{\tau} v |^2 - v_{\nu}^2 \right] \, d \mathcal{H}^n
+ \int_{B^n_r} 2 (x \cdot \nabla v_{\scriptscriptstyle RT}) \, \partial_{y} v_{\scriptscriptstyle RT} \, d x \nonumber \\
&= r \int_{\partial B_r \cap \{ y > 0\}} \left[ | \nabla_{\tau} v |^2 - v_{\nu}^2 \right] \, d \mathcal{H}^n
+ \int_{B^n_r} 2 (x \cdot \nabla_{x} v_{\scriptscriptstyle RT}) \, \partial_{y} v_{\scriptscriptstyle RT} \, d x. \label{rel2}
\end{align}
Similarly, 
\begin{align}
&\int_{ \partial (B_r \cap \{ y < 0\}) }  
\left[ |\nabla v|^2 (z \cdot \nu) - 2 (z \cdot \nabla v) (\nabla v \cdot \nu) \right] \, d \mathcal{H}^n \nonumber \\
&= r \int_{\partial B_r \cap \{ y < 0\}} \left[ | \nabla_{\tau} v |^2 - v_{\nu}^2 \right] \, d \mathcal{H}^n
- \int_{B^n_r} 2 (x \cdot \nabla_{x} v_{\scriptscriptstyle LT}) \, \partial_{y} v_{\scriptscriptstyle LT} \, d x. \label{rel3}
\end{align}
Combining \eqref{rel1}, \eqref{rel2}, and \eqref{rel3} we obtain
\begin{align*}
& (n-1) \int_{B_r} |\nabla v|^2 \, dz \\
&= r \int_{\partial B_r} \left[ | \nabla_{\tau} v |^2 - v_{\nu}^2 \right] \, d \mathcal{H}^n
+ 2 \int_{B^n_r} \left[ (x \cdot \nabla_{x} v_{\scriptscriptstyle RT}) \, \partial_{y} v_{\scriptscriptstyle RT} - (x \cdot \nabla_{x} v_{\scriptscriptstyle LT}) \, \partial_{y} v_{\scriptscriptstyle LT} \right] \, d x.
\end{align*}
Then, thanks to \eqref{symmetry of v} and the equation satisfied by $v$, we conclude that
\begin{align*}
&(n-1) \int_{B_r} |\nabla v|^2 \, dz 
= r \int_{\partial B_r} \left[ | \nabla_{\tau} v |^2 - v_{\nu}^2 \right] \, d \mathcal{H}^n
+ 4 \int_{B^n_r} \partial_{y} v_{\scriptscriptstyle RT} (x \cdot \nabla_{x} v) \, d x \\
&= r \int_{\partial B_r} \left[ | \nabla_{\tau} v |^2 - v_{\nu}^2 \right] \, d \mathcal{H}^n
+ 4 \int_{B^n_r} ( g' (2 v) - g' (0^+) )(x \cdot \nabla_{x} v) \, d x.
\end{align*}
\end{proof}
We will also need the following lemma.
\begin{lemma} \label{2}
For every $r \in (0, r_0)$
$$
\int_{B_r} | \nabla v |^2 \, d z 
=  \int_{\partial B_r} v \, v_{\nu} \, d \mathcal{H}^n
- 2  \int_{B^n_r} v ( g' (2v) - g' (0^+) ) \, d x.
$$

\end{lemma}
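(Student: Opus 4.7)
The plan is to establish this identity as a standard Pohozaev/Rellich-type integration by parts, analogous to Lemma~\ref{5}, but starting from the simpler identity $\mathrm{div}(v\nabla v)=|\nabla v|^2+v\,\Delta v = |\nabla v|^2$, valid on $B_{r_0}\setminus\{y=0\}$ since $v$ is harmonic there. The structure of the argument will exactly mirror the proof of Lemma~\ref{5}, only one differential order lower, so that in this case no symmetrized gradient term will appear on the sphere.

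First, I would split $B_r$ into the two halves $B_r^\pm:=B_r\cap\{\pm y>0\}$ and apply the divergence theorem on each. On $B_r^+$ the boundary consists of the spherical cap $\partial B_r\cap\{y>0\}$, where the outer unit normal is $\nu=z/r$ and so $v\,\nabla v\cdot\nu=v\,v_\nu$, and of the flat disk $B_r^n\times\{0\}$, where the outer unit normal (pointing out of the upper half) is $-\mathbf{e}_{n+1}$ and so $v\,\nabla v\cdot\nu=-v_{\scriptscriptstyle RT}\,\partial_y v_{\scriptscriptstyle RT}$. This gives
\[
\int_{B_r^+}|\nabla v|^2\,dz=\int_{\partial B_r\cap\{y>0\}}v\,v_\nu\,d\mathcal H^n-\int_{B_r^n}v_{\scriptscriptstyle RT}\,\partial_y v_{\scriptscriptstyle RT}\,dx.
\]
The same computation on $B_r^-$, where the outer unit normal to the disk is $+\mathbf{e}_{n+1}$, yields
\[
\int_{B_r^-}|\nabla v|^2\,dz=\int_{\partial B_r\cap\{y<0\}}v\,v_\nu\,d\mathcal H^n+\int_{B_r^n}v_{\scriptscriptstyle LT}\,\partial_y v_{\scriptscriptstyle LT}\,dx.
\]

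Next, I would add the two identities and invoke the even symmetry~\eqref{symmetry of v} of $v$: since $v_{\scriptscriptstyle RT}=v_{\scriptscriptstyle LT}$ and $\partial_y v_{\scriptscriptstyle LT}=-\partial_y v_{\scriptscriptstyle RT}$ on $B_{r_0}^n$, the two disk integrals combine into $-2\int_{B_r^n}v\,\partial_y v_{\scriptscriptstyle RT}\,dx$. Finally, I would use the equation satisfied by $v$: on $\{v>0\}\cap B_{r_0}^n$ one has $\partial_y v_{\scriptscriptstyle RT}=g'(2v)-g'(0^+)$, while on $\{v=0\}$ the factor $v$ kills the integrand regardless of the value of $\partial_y v_{\scriptscriptstyle RT}$; hence
\[
\int_{B_r^n}v\,\partial_y v_{\scriptscriptstyle RT}\,dx=\int_{B_r^n}v\bigl(g'(2v)-g'(0^+)\bigr)\,dx,
\]
and the claimed identity follows.

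No real obstacle is expected: the only delicate point is the justification of the divergence theorem up to the hyperplane $\{y=0\}$, which is legitimate because $v\in H^1$ in each half-ball and the traces $v_{\scriptscriptstyle RT},v_{\scriptscriptstyle LT}$ together with the one-sided normal derivatives are well defined by the boundary conditions in~\eqref{defcritptn} (equivalently, by the system satisfied by $v$ recalled just before Proposition~\ref{9}). The same remark was implicitly used in Lemma~\ref{5}, so there is nothing new to verify here.
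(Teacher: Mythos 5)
Your proposal is correct and follows essentially the same route as the paper: split $B_r$ across $\{y=0\}$, apply the divergence theorem on each half using $\mathrm{div}(v\nabla v)=|\nabla v|^2$, combine the disk contributions via the even symmetry \eqref{symmetry of v}, and then substitute $\partial_y v_{\scriptscriptstyle RT}=g'(2v)-g'(0^+)$ on $\{v>0\}$ (with $v$ annihilating the integrand where it vanishes). Your extra remark about the a.e.\ validity on $\{v=0\}$ is a harmless clarification the paper leaves implicit.
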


\begin{proof}
Since $v$ is harmonic in $B_r \setminus   \{ y = 0 \}$,
\begin{align*}
&\int_{B_r} \textnormal{div} (v \nabla v) \, d z
= \int_{B_r \cap \{ y > 0\}} \textnormal{div} (v \nabla v) \, d z 
+ \int_{B_r \cap \{ y < 0\}} \textnormal{div} (v \nabla v) \, d z\\
&= \int_{B_r \cap \{ y > 0\}}  | \nabla v |^2 \, d z
+ \int_{B_r \cap \{ y < 0\}} | \nabla v |^2 \, d z \\
&+ \int_{B_r \cap \{ y > 0\}}  v \Delta v \, d z
+ \int_{B_r \cap \{ y < 0\}}  v \Delta v \, d z
= \int_{B_r} | \nabla v |^2 \, d z
\end{align*}
On the other hand, applying the divergence theorem in each half-sphere 
\begin{align*}
&\int_{B_r} \textnormal{div} (v \nabla v) \, d z
= \int_{\partial (B_r \cap \{ y > 0\})} v \, v_{\nu} \, d \mathcal{H}^n 
+ \int_{\partial (B_r \cap \{ y < 0\})} v \, v_{\nu} \, d \mathcal{H}^n\\
&= \int_{\partial B_r} v \, v_{\nu} \, d \mathcal{H}^n
+  \int_{B^n_r} \left[ v_{\scriptscriptstyle LT} \, \partial_y v_{\scriptscriptstyle LT} - v_{\scriptscriptstyle RT} \, \partial_y v_{\scriptscriptstyle RT} \right] \, d x \\
&=\int_{\partial B_r} v \, v_{\nu} \, d \mathcal{H}^n
- 2  \int_{B^n_r} v \, \partial_y v_{\scriptscriptstyle RT}  \, d x \\
&= \int_{\partial B_r} v \, v_{\nu} \, d \mathcal{H}^n
- 2  \int_{B^n_r} v ( g' (2v) - g' (0^+) ) \, d x,
\end{align*}
where we also used \eqref{symmetry of v}.
Comparing the last two chains of equalities we conclude.
\end{proof}
We now start by differentiating $F_v(r)$.

\begin{lemma} \label{1}
For every $r \in (0, r_0)$
\begin{align*}
F_v' (r) &= \int_{\partial B_r} 2 \, v \, v_{\nu} \, d \mathcal{H}^n + \frac{n}{r} F_v (r) \\
&= 2 \int_{B_r} | \nabla v |^2 \, d z + 4  \int_{B^n_r} v ( g' (2v) - g' (0^+) ) \, d x
+ \frac{n}{r} F_v (r).
\end{align*}

\end{lemma}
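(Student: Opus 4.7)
The plan is a direct differentiation of $F_v(r)$, using a rescaling to the unit sphere, followed by an application of Lemma~\ref{2} to convert the boundary integral into a volume integral plus a trace term.

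First I would rescale: for $z = r\omega$ with $\omega \in \mathbb{S}^n$,
\[
F_v(r) = \int_{\partial B_r} v^2 \, d\mathcal{H}^n = r^n \int_{\mathbb{S}^n} v(r\omega)^2 \, d\mathcal{H}^n(\omega).
\]
Differentiating in $r$ and then changing variables back to $\partial B_r$, using that $\omega$ coincides with the outer unit normal $\nu$ at the point $r\omega$ (so $\nabla v \cdot \omega = v_\nu$), gives
\[
F_v'(r) = \frac{n}{r} F_v(r) + 2\int_{\partial B_r} v\, v_\nu \, d\mathcal{H}^n,
\]
which is the first of the claimed expressions. The differentiation under the integral sign is justified because, by Theorem~\ref{opt_reg_theorem} and the construction \eqref{v intro}, $v$ is $C^{1,1/2}$ on each half-ball $B_{r_0} \cap \{\pm y > 0\}$ and is even in $y$; in particular $v$ is continuous across $\{y=0\}$ with bounded gradient on each side, which is more than enough regularity to differentiate the spherical integral (the singular set $\partial B_r \cap \{y=0\}$ has $\mathcal{H}^n$-measure zero).

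To obtain the second expression, I would invoke Lemma~\ref{2}, which gives
\[
\int_{\partial B_r} v\, v_\nu \, d\mathcal{H}^n = \int_{B_r} |\nabla v|^2 \, dz + 2\int_{B_r^n} v\,\bigl(g'(2v) - g'(0^+)\bigr)\, dx.
\]
Substituting this into the formula above yields
\[
F_v'(r) = \frac{n}{r} F_v(r) + 2\int_{B_r} |\nabla v|^2 \, dz + 4\int_{B_r^n} v\,\bigl(g'(2v) - g'(0^+)\bigr)\, dx,
\]
completing the proof.

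The computation is essentially routine; the only subtle point is the regularity of $v$ across $\{y=0\}$, which is needed to differentiate $F_v$ and to use the divergence theorem on each half-ball. This is handled by the $C^{1,1/2}$ regularity already established in Theorem~\ref{opt_reg_theorem}, which is invoked at this point in the paper as part of the standing hypothesis.
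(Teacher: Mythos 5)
Your proof is correct and follows essentially the same route as the paper: write $F_v(r)$ in polar coordinates, differentiate under the integral sign, and then substitute in the identity of Lemma~\ref{2}. The extra remarks about the regularity of $v$ across $\{y=0\}$ are a reasonable justification for the differentiation step, which the paper treats implicitly.
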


\begin{proof}
Writing the integral in polar coordinates and differentiating we obtain
\begin{align*}
F_v' (r) &= \frac{d}{dr} \left[ \int_{\mathbb{S}^{n}} v^2 (r \omega) \, r^n d \mathcal{H}^n (\omega) \right] \\
&= \int_{\mathbb{S}^{n}} 2 \, v (r \omega) \nabla v (r \omega) \cdot \omega  \, r^n \mathcal{H}^n (\omega)
+ n \int_{\mathbb{S}^{n}} v^2 (r \omega) \, r^{n-1} d \mathcal{H}^n (\omega) \\
&= \int_{\partial B_r} 2 \, v v_{\nu}  \, d \mathcal{H}^n
+ \frac{n}{r} \int_{\partial B_r} v^2 (z) \, d \mathcal{H}^n \\
&= 2 \int_{B_r} | \nabla v |^2 \, d z + 4  \int_{B^n_r} v ( g' (2v) - g' (0^+) ) \, d x
+ \frac{n}{r} F_v (r),
\end{align*}
where the last equality follows by Lemma \ref{2}.
\end{proof}
We now state a trace inequality, whose proof can be found in \cite{FKS}.
\begin{lemma} \label{11}
For any $r > 0$ and any function $w \in W^{1,2} (B_r)$ we have
$$
\int_{\partial B_r} | w - \overline{w} |^2 \, d \mathcal{H}^n 
\leq C r \int_{B_r} | \nabla w |^2 \, d z,   
$$
where 
$$
\overline{w}:= \fint_{\partial B_r} w \, d \mathcal{H}^{n},
$$
and $C$ is a constant depending only on the dimension $n$.
\end{lemma}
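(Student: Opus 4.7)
The plan is to reduce the inequality to a fixed scale via dilation and then prove the $r = 1$ version by a standard compactness/contradiction argument based on the classical Poincaré inequality on $B_1$ and the continuity of the trace operator.

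First I would normalize the scale. Given $w \in W^{1,2}(B_r)$, define $\tilde w(z) := w(r z)$ for $z \in B_1$. A direct change of variables yields
$$
\int_{\partial B_1} |\tilde w - \overline{\tilde w}|^2 \, d\mathcal{H}^n = r^{-n} \int_{\partial B_r} |w - \overline w|^2 \, d\mathcal{H}^n,
\qquad
\int_{B_1} |\nabla \tilde w|^2 \, dz = r^{1-n} \int_{B_r} |\nabla w|^2 \, dz,
$$
with $\overline{\tilde w} = \overline w$. Hence it suffices to produce a constant $C = C(n)$ such that
$$
\int_{\partial B_1} |w - \overline w|^2 \, d\mathcal{H}^n \leq C \int_{B_1} |\nabla w|^2 \, dz \qquad \forall\, w \in W^{1,2}(B_1),
$$
and then undo the scaling to obtain the stated bound with the correct power of $r$.

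Next I would prove the unit-ball inequality by contradiction. Suppose no such $C$ exists. Then there is a sequence $\{w_k\} \subset W^{1,2}(B_1)$ with
$$
\int_{\partial B_1} w_k \, d\mathcal{H}^n = 0,
\qquad \int_{\partial B_1} w_k^2 \, d\mathcal{H}^n = 1,
\qquad \int_{B_1} |\nabla w_k|^2 \, dz \to 0.
$$
Setting $c_k := \fint_{B_1} w_k \, dz$, the classical Poincaré inequality on $B_1$ gives
$$
\int_{B_1} (w_k - c_k)^2 \, dz \leq C \int_{B_1} |\nabla w_k|^2 \, dz \to 0,
$$
so that $w_k - c_k \to 0$ in $W^{1,2}(B_1)$. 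By continuity of the trace $W^{1,2}(B_1) \to L^2(\partial B_1)$ on the smooth domain $B_1$, this gives $w_k - c_k \to 0$ in $L^2(\partial B_1)$. Using $\int_{\partial B_1} w_k \, d\mathcal{H}^n = 0$, one has
$$
|c_k| = \left| \fint_{\partial B_1} (w_k - c_k) \, d\mathcal{H}^n \right| \leq \left( \fint_{\partial B_1} (w_k - c_k)^2 \, d\mathcal{H}^n \right)^{1/2} \to 0,
$$
so $w_k \to 0$ in $L^2(\partial B_1)$, contradicting the normalization $\int_{\partial B_1} w_k^2 \, d\mathcal{H}^n = 1$.

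The argument is entirely classical and presents no serious obstacle: the only ingredients are the standard Poincaré inequality on a smooth domain and continuity of the trace operator, both of which are well known in this setting. The scaling computation in the first step records precisely why the factor of $r$ appears on the right-hand side, and this is the unique homogeneity that makes the two sides transform consistently under dilation.
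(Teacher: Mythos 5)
Your argument is correct. Note that the paper does not actually prove this lemma: it simply states it as "a trace inequality, whose proof can be found in \cite{FKS}", so you are supplying a self-contained proof where the authors defer to a reference. Your two steps are both sound: the dilation computation correctly accounts for the fact that the balls live in $\R^{n+1}$ (so $d\mathcal{H}^n$ on $\partial B_r$ scales like $r^n$, $dz$ like $r^{n+1}$, and $|\nabla \tilde w|^2$ contributes $r^2$, yielding the single factor of $r$), and the unit-ball inequality follows from Poincar\'e plus trace continuity exactly as you describe. One small remark: the contradiction framing is not really needed, since your chain of estimates is already quantitative --- writing $c:=\fint_{B_1}w$, one has directly
\[
\| w - \overline{w}\|_{L^2(\partial B_1)} \leq \| w - c\|_{L^2(\partial B_1)} + |c-\overline w|\,\mathcal{H}^n(\partial B_1)^{1/2}
\leq 2\, \| w - c\|_{L^2(\partial B_1)} \leq C \| w - c\|_{W^{1,2}(B_1)} \leq C \|\nabla w\|_{L^2(B_1)},
\]
using $|c-\overline w|\le\bigl(\fint_{\partial B_1}(w-c)^2\,d\mathcal{H}^n\bigr)^{1/2}$, the trace theorem, and Poincar\'e. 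Either way, the proof is complete and uses only classical ingredients.
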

In the following we will need an improvement of Lemma~\ref{11}, 
which can be obtained using the fact that $v$ is superharmonic. 
\begin{lemma} \label{10}
There exists a constant $C$, depending only on $n$, such that for any $r \in (0, r_0)$
$$
F_v (r) = \int_{\partial B_r} v^2 \, d \mathcal{H}^n
\leq C r \int_{B_r} | \nabla v |^2 \, d z, 
$$
and
$$
\int_{B^n_r} v^2 \, d \mathcal{H}^n
\leq C r \int_{B_r} | \nabla v |^2 \, d z.
$$
\end{lemma}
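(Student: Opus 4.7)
The plan is to refine the trace inequality of Lemma~\ref{11} by exploiting the superharmonicity of $v$ in $B_{r_0}$ together with two key facts: $v(0,0)=0$ (since $(0,0)$ is a free boundary point and $u$ is continuous by Theorem~\ref{opt_reg_theorem}) and $v\geq 0$ on the hyperplane section $B^n_{r_0}$.

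Setting $\bar v:=\fint_{\partial B_r} v\,d\mathcal{H}^n$, I would start from the orthogonal decomposition
$$
\int_{\partial B_r} v^2\,d\mathcal{H}^n = \int_{\partial B_r}(v-\bar v)^2\,d\mathcal{H}^n + \bar v^{\,2}\,\mathcal{H}^n(\partial B_r),
$$
so that Lemma~\ref{11} handles the first summand. To control $\bar v^{\,2}\,\mathcal{H}^n(\partial B_r)$, I would invoke the mean value inequality for superharmonic functions to get $\bar v(r)\leq v(0,0)=0$, from which Cauchy--Schwarz yields
$$
\bar v^{\,2}\,\mathcal{H}^n(\partial B_r) \leq \int_{\partial B_r} v_-^2\,d\mathcal{H}^n, \qquad v_-:=\max(-v,0).
$$
Since $v\geq 0$ on $B^n_{r_0}$, the function $v_-$ vanishes on $B^n_r$; integrating along each vertical segment $y\mapsto v_-(x,y)$ starting from the point $(x,0)$ then gives the Friedrichs--Poincar\'e bound $\int_{B_r} v_-^2\,dz\leq C r^2 \int_{B_r}|\nabla v|^2\,dz$. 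Combining this with the standard rescaled trace inequality $\int_{\partial B_r} w^2\,d\mathcal{H}^n\leq C r\int_{B_r}|\nabla w|^2\,dz+\tfrac{C}{r}\int_{B_r} w^2\,dz$ applied to $w=v_-$, together with the pointwise inequality $|\nabla v_-|\leq |\nabla v|$, yields $\int_{\partial B_r} v_-^2\leq Cr\int_{B_r}|\nabla v|^2$, which completes the proof of the first inequality of Lemma~\ref{10}.

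For the second inequality, I would integrate the first one in the radius variable: writing $\int_{B_r} v^2\,dz=\int_0^r\int_{\partial B_\rho} v^2\,d\mathcal{H}^n\,d\rho$ and using the monotonicity of $\rho\mapsto\int_{B_\rho}|\nabla v|^2\,dz$, one obtains $\int_{B_r} v^2\leq Cr^2\int_{B_r}|\nabla v|^2$. The trace inequality from $B_r$ to its interior hyperplane section $B^n_r$ (obtained by applying the boundary trace inequality to the half-ball $B_r\cap\{y>0\}$) then gives $\int_{B^n_r} v^2\,d\mathcal{H}^n\leq Cr\int_{B_r}|\nabla v|^2+\tfrac{C}{r}\int_{B_r} v^2$, and combining with the previous bound on $\int_{B_r} v^2$ finishes the proof.

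The main obstacle is the first step: converting the \emph{one-sided} information $\bar v\leq 0$ into the \emph{two-sided} estimate $\bar v^{\,2}\,\mathcal{H}^n(\partial B_r)\leq Cr\int_{B_r}|\nabla v|^2$. All the individual ingredients---mean value property, Cauchy--Schwarz, one-dimensional Poincar\'e along vertical segments---are classical, but the argument crucially relies on the interplay between superharmonicity (which gives $\bar v\leq 0$) and nonnegativity on the hyperplane (which gives $v_-\equiv 0$ on $B^n_r$); neither piece of information alone would suffice to absorb the $\bar v^{\,2}$ remainder left over by Lemma~\ref{11}.
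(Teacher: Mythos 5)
Your proposal is correct and follows the same core strategy as the paper: use superharmonicity to get $\bar v \leq v(0) = 0$, then exploit the nonnegativity of $v$ on $B^n_r$ to control the negative part $v^-$ (which vanishes on the hyperplane slice) and thereby absorb the mean-value term left over by Lemma~\ref{11}. Your orthogonal decomposition $F_v(r)=\int_{\partial B_r}(v-\bar v)^2+\bar v^{\,2}\mathcal{H}^n(\partial B_r)$ and the Cauchy--Schwarz bound $\bar v^{\,2}\mathcal{H}^n(\partial B_r)\leq\int_{\partial B_r}v_-^2$ are the same identity the paper uses in the form $F_v(r)\leq Cr\int|\nabla v|^2+\bar v\int_{\partial B_r}v$ together with $\int_{\partial B_r}|v|\leq 2\int_{\partial B_r}|v^-|$. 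Where you diverge is in the technical implementation of the two Poincar\'e-type inequalities: the paper establishes both $\int_{\partial B_r}(v^-)^2\leq Cr\int_{B_r}|\nabla v^-|^2$ (for functions vanishing on $B_r^n$) and the trace-to-hyperplane estimate by scaling variational problems on $B_1$ whose positivity follows from compactness (and, in the first case, the observation that the minimizer is $w(x,y)=y$, so $c_n=1$); you instead derive the first via a one-dimensional Friedrichs--Poincar\'e integration along vertical segments combined with the standard scaled boundary trace inequality, and the second by integrating the first inequality over $\rho\in(0,r)$ and then applying the trace inequality on the half-ball. Both routes are sound; yours is more elementary and self-contained, while the paper's compactness formulation gives the sharp constants with less explicit computation.
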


\begin{proof}
Let us start by proving the first inequality.
Since $v \in W^{1,2} (B_r)$, by Lemma \ref{11}
\begin{equation} \label{qwsa}
F_v (r) = \int_{\partial B_r} v^2 \, d \mathcal{H}^n
\leq C r \int_{B_r} | \nabla v |^2 \, d z
+ \overline{v} \int_{\partial B_r} v \, d \mathcal{H}^n.
\end{equation}
Let now $\widetilde{v}$ be given by \eqref{def v tilde}.
 Since $\widetilde{v}$ is superharmonic,   
$$
0 = \widetilde{v} (0) 
\geq \overline{\widetilde{v}} 
= \fint_{\partial B_r} \widetilde{v} \, d \mathcal{H}^n
= \fint_{\partial B_r} v \, d \mathcal{H}^n 
= \fint_{\partial B_r} v^+ \, d \mathcal{H}^n + \fint_{\partial B_r} v^- \, d \mathcal{H}^n.
$$
The above inequality implies that
\begin{equation} \label{lessthan}
\int_{\partial B_r} v^+ \, d \mathcal{H}^n 
\leq - \int_{\partial B_r} v^- \, d \mathcal{H}^n
= \int_{\partial B_r} |v^-| \, d \mathcal{H}^n.
\end{equation}
Since $v (x, 0) \geq 0$, we have $v^- (x, 0) = 0$. Thus, by rescaling,
$$
\frac{r\int_{B_r}|\nabla v^-|^2dz}{\int_{\partial B_r} (v^-)^2d \mathcal{H}^n}\geq \min\biggl\{
\frac{\int_{B_1}|\nabla w|^2dz}{\int_{\partial B_1} w^2d \mathcal{H}^n}\,:\,w:B_1\to \R,\,\,w|_{B_1^n}=0\biggr\}=:c_n>0,
$$
where the positivity of $c_n$ follows by a standard compactness argument (actually, by spectral analysis theory, the minimum is attained by $w(x,y)=y$, thus $c_n=1$).

Hence, by H\"older inequality,
$$
\fint_{\partial B_r} | v^- | \, d \mathcal{H}^n \leq \biggl(\fint_{\partial B_r} (v^- )^2 \, d \mathcal{H}^n\biggr)^{1/2}\leq 
\biggl(c_n^{-1}r^2\fint_{B_r} |\nabla v^-|^2 \, dz\biggr)^{1/2},
$$
that combined with \eqref{lessthan} yields
\begin{align*}
\int_{\partial B_r} | v | \, d \mathcal{H}^n
\leq 2 \int_{\partial B_r} | v^- | \, d \mathcal{H}^n 
\leq C r^{\frac{n+1}{2}} \left( \int_{B_r} | \nabla v |^2 \, dz \right)^{1/2}. 
\end{align*}
Finally, plugging this into \eqref{qwsa} we get
\begin{align*}
F_v (r) &\leq C \biggr[r \int_{B_r} | \nabla v |^2 \, d z
+ \frac{1}{r^n} \left( \int_{\partial B_r} | v | \, d \mathcal{H}^n \right)^2 \biggr]
\leq C r \int_{B_r} | \nabla v |^2 \, d z, 
\end{align*}
which proves the first inequality of the statement.

To show the second inequality, it is enough to observe that
$$
\min\biggl\{
\frac{\int_{B_1}|\nabla w|^2dz+\int_{\partial B_1} w^2d \mathcal{H}^n}{\int_{B_1^n} w^2d \mathcal{H}^n}\,:\,w:B_1\to \R\biggr\}=:c_n'>0,
$$
where again positivity of $c_n'$ follows by a standard compactness argument.
By rescaling, this implies that 
$$
\int_{B^n_r} v^2 \, d \mathcal{H}^n
\leq \frac{1}{c_n'} \biggl(r \int_{B_r} | \nabla v |^2 \, d z+\int_{\partial B_r} v^2 \, d \mathcal{H}^n\biggr),
$$
and the result follows by the first inequality of the statement.
\end{proof}
Before proving Proposition~\ref{9} we need another lemma.
\begin{lemma} \label{8}
There exists $\overline{r}_0  \in (0,  r_0 )$ 
and a positive constant $C= C (n)$ such that, whenever $F_v (r) > r^{n+4}$, we have
$$
\int_{B_r} | \nabla v |^2 \, d z \leq 2 \int_{\partial B_r} v \,  v_{\nu} \, d \mathcal{H}^n,
$$
and 
$$
\int_{\partial B_r} v \,  v_{\nu} \, d \mathcal{H}^n > C r^{n+3},
$$
for every $ 0 < r < \overline{r}_0$.
\end{lemma}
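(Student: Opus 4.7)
The plan is to use Lemma~\ref{2} as the starting identity, control the boundary integral using the concavity of $g$ together with the trace-type estimate of Lemma~\ref{10}, and then absorb the resulting error term for $r$ sufficiently small.

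More precisely, by Lemma~\ref{2},
\[
\int_{B_r} |\nabla v|^2 \, dz = \int_{\partial B_r} v\, v_\nu \, d\mathcal{H}^n - 2\int_{B_r^n} v\bigl(g'(2v) - g'(0^+)\bigr) \, dx.
\]
Since $g$ is concave and $v\geq 0$ on $B_{r_0}^n$, we have $0\leq g'(0^+)-g'(2v)\leq 2\|g''\|_{L^\infty} v$, so
\[
- 2\int_{B_r^n} v\bigl(g'(2v)-g'(0^+)\bigr)\, dx \leq 4\|g''\|_{L^\infty}\int_{B_r^n} v^2 \, dx.
\]
Now I would invoke the second inequality of Lemma~\ref{10}, $\int_{B_r^n} v^2\,dx \leq Cr\int_{B_r}|\nabla v|^2\,dz$, to obtain
\[
\int_{B_r} |\nabla v|^2\, dz \leq \int_{\partial B_r} v\, v_\nu\, d\mathcal{H}^n + 4C\|g''\|_{L^\infty}\, r \int_{B_r}|\nabla v|^2\, dz.
\]
Choosing $\overline{r}_0\in(0,r_0)$ so small that $4C\|g''\|_{L^\infty}\,\overline{r}_0 \leq 1/2$, the last term can be absorbed into the left-hand side, yielding the first inequality
\[
\int_{B_r}|\nabla v|^2\, dz \leq 2\int_{\partial B_r} v\, v_\nu\, d\mathcal{H}^n
\qquad \forall\, r\in(0,\overline{r}_0).
\]

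For the second inequality, I would combine this with the first estimate of Lemma~\ref{10}, namely $F_v(r)\leq Cr\int_{B_r}|\nabla v|^2\, dz$. Indeed, under the assumption $F_v(r)>r^{n+4}$, this gives
\[
\int_{B_r}|\nabla v|^2 \, dz \geq \frac{F_v(r)}{Cr} > \frac{r^{n+3}}{C},
\]
and then the first inequality of the lemma yields
\[
\int_{\partial B_r} v\, v_\nu\, d\mathcal{H}^n \geq \frac{1}{2}\int_{B_r}|\nabla v|^2\, dz > \frac{r^{n+3}}{2C},
\]
which is the second claim (after relabelling constants).

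I do not expect any serious obstacle here: the only delicate point is the absorption step, which relies on the trace-type bound $\int_{B_r^n} v^2 \leq Cr \int_{B_r}|\nabla v|^2$ from Lemma~\ref{10}; crucially, the constant there depends only on $n$, so the smallness of $\overline{r}_0$ can be fixed just in terms of $n$ and $\|g''\|_{L^\infty}$, independently of $r$.
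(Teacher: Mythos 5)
Your proof is correct and follows essentially the same route as the paper: rewrite Lemma~\ref{2}, bound the cohesive term using $|g'(2v)-g'(0^+)|\leq 2\|g''\|_{L^\infty}v$ together with the second inequality of Lemma~\ref{10}, absorb the small error for $r$ below a threshold depending only on $n$ and $\|g''\|_{L^\infty}$, and then chain with the first inequality of Lemma~\ref{10} under the hypothesis $F_v(r)>r^{n+4}$. The only cosmetic difference is the order of presentation (the paper derives $\int_{B_r}|\nabla v|^2 > r^{n+3}/C$ first and then the absorption estimate); the substance is identical.
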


\begin{proof}
Suppose that $F_v (r) > r^{n+4}$. Then, by Lemma \ref{10},
$$
r^{n+4} < F_v (r) \leq C r \int_{B_r} | \nabla v |^2 \, d z, 
$$
which implies
\begin{equation} \label{1 over C}
\int_{B_r} | \nabla v |^2 \, d z > \frac{r^{n+3}}{C}. 
\end{equation}
Thanks to Lemma \ref{2} and Lemma \ref{10}, for $r$ sufficiently small we have
\begin{align}\label{vnu Dir}
&\int_{\partial B_r} v \,  v_{\nu} \, d \mathcal{H}^n
= \int_{B_r} | \nabla v |^2 \, d z + 2  \int_{B_r^n} v \, ( g' (2v) - g' (0^+) ) \, d x \nonumber\\
&\geq \int_{B_r} | \nabla v |^2 \, d z - 4 \| g '' \|_{L^{\infty}} \int_{B_r^n} v^2 \, d x  \\
&\geq  (1 - 4 r C  \| g '' \|_{L^{\infty}} ) \int_{B_r} | \nabla v |^2 \, d z \geq \frac{1}{2} \int_{B_r} | \nabla v |^2 \, d z.\nonumber
\end{align}
This shows the first inequality which, together with \eqref{1 over C}, allows us to conclude.
\end{proof}
We are now ready to prove Proposition~\ref{9}.
\begin{proof}[Proof of Proposition \ref{9}]
Since $r\mapsto \max\{ F_v (r) , r^{n+4} \}$ is a semiconvex function (being the maximum between two smooth functions)
and $\Phi_v (r) = n+4$ on the region where $\max\{ F_v (r) , r^{n+4} \} = r^{n+4}$, it suffices to prove the monotonicity of $\Phi_v(r)e^{Cr}$ in the open set $\{r\,:\,F_v (r) > r^{n+4}\}$.

Note that, thanks to Lemma \ref{1},
\begin{equation} \label{used also later}
\Phi_v  (r) = r  \frac{F'_v (r)}{F_v (r)}  
= \frac{2 r}{F_v (r)} \int_{\partial B_r} v \, v_{\nu} \, d \mathcal{H}^n + n.
\end{equation}
Setting $\Psi_v (r):= \Phi_v (r) - n$, the logarithmic derivative of $\Psi_v$ is given by
\begin{align}
\frac{\Psi_v' (r)}{\Psi_v (r)} &= \frac{d}{dr} \log \left( \frac{r}{F_v (r)} \int_{\partial B_r} v \, v_{\nu} \, d \mathcal{H}^n \right) \nonumber \\
&= \frac{d}{dr} \left( \log r  + \log \int_{\partial B_r} v \, v_{\nu} \, d \mathcal{H}^n
- \log ( F_v (r) ) \right) \nonumber\\
&= \frac{1}{r}  
+ \frac{ \displaystyle \frac{d}{d r} \int_{\partial B_r} v \, v_{\nu} \, d \mathcal{H}^n}{\displaystyle \int_{\partial B_r} v \, v_{\nu} \, d \mathcal{H}^n}
- \frac{2 \displaystyle \int_{\partial B_r} v \, v_{\nu} \, d \mathcal{H}^n}{F_v (r)}
- \frac{n}{r}, \label{6}
\end{align}
where we used again Lemma~\ref{1}.
We now divide the remaining part of the proof into several steps.
In the following, it will be convenient to define
\begin{align*}
I_1(r): &= 2 \int_{\partial B^n_r} v ( g' (2v)  - g' (0^+) ) \, d \mathcal{H}^{n-1} , \\
I_2(r): &= - \frac{2(n-1)}{r}  \int_{B_r^n} v ( g' (2v)  - g' (0^+) ) \, d x , \\
I_3(r): &= - \frac{4}{r}  \int_{B^n_r} ( g' (2v)  - g' (0^+) )
(x \cdot \nabla_{x} v) \, d x.
\end{align*}

\vspace{.2cm}

\noindent
\textbf{Step 1.} We show that
\begin{align*}
\frac{d}{d r} \int_{\partial B_r} v \, v_{\nu} \, d \mathcal{H}^n 
= \frac{n-1}{r} 
\int_{\partial B_r} v \, v_{\nu} \, d \mathcal{H}^n 
+ 2 \int_{\partial B_r} v_{\nu}^2 \, d \mathcal{H}^n
+ I_1(r) + I_2(r) + I_3(r). 
\end{align*}
Indeed, thanks to Lemma \ref{2}, 
\begin{align}
&\frac{d}{d r} \int_{\partial B_r} v \, v_{\nu} \, d \mathcal{H}^n
= \frac{d}{d r} \int_{B_r} | \nabla v |^2 \, d z 
+ 2 \frac{d}{d r} \int_{B_r^n} v ( g' (2v)  - g' (0^+) ) \, d x \nonumber \\
&= \int_{\partial B_r} | \nabla v |^2 \, d \mathcal{H}^n + I_1(r). \label{4}
\end{align}
Using first Lemma \ref{5} and then Lemma \ref{2}, 
the first integral in the last expression can be written as  
\begin{align*}
&\int_{\partial B_r} |\nabla v|^2 \, d \mathcal{H}^n
= \frac{n-1}{r} \int_{B_r} |\nabla v|^2 \, dz
+ 2 \int_{\partial B_r} v_{\nu}^2 \, d \mathcal{H}^n
+ I_3 (r) \\
&=  \frac{n-1}{r} 
\int_{\partial B_r} v \, v_{\nu} \, d \mathcal{H}^n  + I_2 (r)
+  2 \int_{\partial B_r} v_{\nu}^2 \, d \mathcal{H}^n
+ I_3 (r).
\end{align*}
Inserting this last equality into \eqref{4}, we obtain the claim.

\vspace{.2cm}

\noindent
\textbf{Step 2.} We prove that
\begin{align*}
I_1(r) &+ I_2(r) + I_3(r) 
\geq -  C  \int_{B_r} | \nabla v |^2 \, d z - C r^{ n + \frac{7}{2}}.
\end{align*}
Indeed,
\begin{align*}
\frac{1}{2} I_3 ( r ) &= - \frac{2}{r}  \int_{B^n_r } ( g' (2v)  - g' (0^+) ) (x \cdot \nabla_{x} v) \, d x \\
&= - \frac{2}{r}  \int_{B^n_r  } 
\text{div}_{x} \big( v ( g' (2v)  - g' (0^+) ) x \big) \, d x 
+  \frac{2}{r}  \int_{B^n_r  } v ( g' (2v)  - g' (0^+) ) (\text{div}_{x} x ) \, d x \\
& \hspace{.4cm}+ \frac{2}{r}  \int_{B^n_r  } v \, x \cdot  \nabla_{x} ( g' (2v) )  \, d x \\
&= - I_1 ( r ) - I_2 ( r ) +  \frac{2}{r}  \int_{B^n_r  } v ( g' (2v)  - g' (0^+) ) \, d x
+ \frac{2}{r}  \int_{B^n_r  } 2 v (x \cdot \nabla_{x} v) g'' (2v)  \, d x.
\end{align*}
Therefore, 
\begin{align}
I_1(r) &+ I_2(r) + I_3(r) \label{list1} \\
&= \frac{1}{2} I_3 ( r ) +  \frac{2}{r}  \int_{B^n_r  } v ( g' (2v)  - g' (0^+) ) \, d x
+ \frac{2}{r}  \int_{B^n_r  } 2 v (x \cdot \nabla_{x} v) g'' (2v)  \, d x. \nonumber
\end{align}
Let us first estimate the second term in the right hand side of the  identity above. 
Thanks to Lemma~\ref{10},
\begin{align}
&\frac{2}{r}  \int_{B^n_r } v ( g' (2v)  - g' (0^+) ) \, d x
 \geq - \frac{4 \| g'' \|_{L^{\infty}}}{r}  \int_{B^n_r } v^2 \, d x   
\geq -  C  \int_{B_r} | \nabla v |^2 \, d z. \label{list2} 
\end{align}
Let us now estimate the remaining two terms. 
There exists $\tau = \tau (x) \in (0,1)$ such that
\begin{align}
&\frac{1}{2} I_3 ( r ) 
+ \frac{2}{r}  \int_{B^n_r  } 2 v (x \cdot \nabla_{x} v) g'' (2v)  \, d x \nonumber \\
&= \frac{2}{r}  \int_{B^n_r  } 
(x \cdot \nabla_{x} v) \Big( 2 v g'' (2v) - g' (2v)  + g' (0^+) \Big) \, d x \nonumber \\
&= \frac{2}{r}  \int_{B^n_r  }  2 v
(x \cdot \nabla_{x} v) ( g'' (2v) - g'' (2 \tau v) ) \, d x \nonumber \\
&\geq -\frac{8 \| g''' \|_{L^{\infty}}}{r}  \int_{B^n_r  }  v^{2} |x \cdot \nabla_{x} v | \, d x \nonumber \\
&\geq - 8 C  \| g''' \|_{L^{\infty}} \, r^{3 + \frac{1}{2}} \int_{B^n_r }\, d x
\geq - C r^{ n + \frac{7}{2}}, \label{list3}
\end{align}
where we used that, by the optimal regularity of $v$ (see Theorem \ref{opt_reg_theorem}), $|v|\leq Cr^{3/2}$ and $|\nabla v|\leq Cr^{1/2}$.
Combining \eqref{list1}--\eqref{list3}, for $r$ sufficiently small the claim follows.
\vspace{.2cm}

\noindent
\textbf{Step 3.} We conclude.
Recalling that $\Psi_v (r)= \Phi_v (r) - n$, from Step 1 we have
\begin{align}
\frac{\Psi_v'(r)}{\Psi_v(r)} 
&= \frac{1}{r}  - \frac{n}{r} 
+ \frac{ \displaystyle \frac{d}{d r} \int_{\partial B_r} v \, v_{\nu} \, d \mathcal{H}^n}{\displaystyle \int_{\partial B_r} v \, v_{\nu} \, d \mathcal{H}^n}
- \frac{2 \displaystyle \int_{\partial B_r} v \, v_{\nu} \, d \mathcal{H}^n}{F_u (r)} \nonumber \\
&=  \frac{\displaystyle 2 \int_{\partial B_r} v_{\nu}^2 \, d \mathcal{H}^n}
{\displaystyle \int_{\partial B_r} v \, v_{\nu} \, d \mathcal{H}^n}
- \frac{\displaystyle 2 \int_{\partial B_r} v \, v_{\nu} \, d \mathcal{H}^n}{\displaystyle \int_{\partial B_r} v^2 \, d \mathcal{H}^n}
+ \frac{I_1(r) + I_2(r) + I_3(r)}{\displaystyle \int_{\partial B_r} v \, v_{\nu} \, d \mathcal{H}^n}\nonumber \\
&\geq  \frac{I_1(r) + I_2(r) + I_3(r)}{\displaystyle \int_{\partial B_r} v \,  v_{\nu} \, d \mathcal{H}^n}, \nonumber
\end{align}
where in the last step we used H\"older inequality and the fact that, 
by Lemma~\ref{8}, the integral at the denominator is positive.
Then, thanks to Step 2 and Lemma \ref{8} again, we obtain
\begin{align*}
\frac{\Psi_v'(r)}{\Psi_v(r)} 
& \geq -  C  \frac{\displaystyle \int_{B_r} | \nabla v |^2 \, d z}{\displaystyle \int_{\partial B_r} v \, v_{\nu} \, d \mathcal{H}^n}
- C \frac{ r^{ n + \frac{7}{2} } }{\displaystyle \int_{\partial B_r} v \, v_{\nu} \, d \mathcal{H}^n} \\
& \geq  - C - C r^{n + \frac{7}{2} - (n+3)} \geq - C.
\end{align*}
The previous chain of inequalities gives
\begin{align*}
0 \leq \left[ \log \Psi_v (r) + C r \right]'
= \left[ \log \Big( \Psi_v (r) e^{C r}   \Big) \right]'. 
\end{align*}
Recalling that $\Psi_v (r) = \Phi_v (r) - n$, this shows that 
$r \mapsto (\Phi_v (r) - n) e^{C r}$
is increasing, and thus the conclusion.
\end{proof}

\end{section}

\begin{section}{Blow up profiles and regularity of the free boundary}

We are now going to study the blow up profiles of $v$ and the regularity of the free boundary.
As in the previous section, with no loss of generality we will assume that $(0,0)$
is a free boundary point and that $v (x, 0) \geq 0$ for every $x \in B^n_{\overline{r}_0}$, 
where $\overline{r}_0$ is given by Proposition \ref{9}. 

We define, for every $r \in (0,\overline{r}_0)$, the function $v_r : B_1 \to \mathbb{R}$ as
\begin{equation} \label{defin vr}
v_r (z) := \frac{v (r z)}{d_r} , \qquad d_r:= \left( \frac{F_v (r)}{r^n} \right)^{\frac{1}{2}},
\end{equation}
where $F_v$ is as in Proposition~\ref{9}.
Note that  
\begin{equation} \label{bounded trace}
\int_{\partial B_1} v_r^2 \, d \mathcal{H}^n = 1 \qquad \text{ for every } r < \overline{r}_0.
\end{equation}
Next proposition shows which are the possible values of $\Phi_v (0^+)$. 
\begin{proposition} \label{Phi}
Let the assumptions of Theorem~\ref{opt_reg_theorem} be satisfied, 
suppose that $(0, 0)$ is a free boundary point for $u$,
and that $u(x,0)\geq 0$ in $B_{\overline r_0}^n$.
Let $v$ be given by \eqref{v intro}, and let $\Phi_v$ be as in Proposition~\ref{9}.
Then, either $\Phi_v (0^+) = n+3$, or $\Phi_v (0^+) \geq n+4$.
\end{proposition}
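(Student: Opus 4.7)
The plan is to analyze the blow-up of $v$ at $(0,0)$ and apply the classification of homogeneous global solutions of the (even) Signorini problem. Set $\mu := (\Phi_v(0^+)-n)/2$. If $F_v(r)\leq r^{n+4}$ on a right neighborhood of $0$, then $\Phi_v(r)\equiv n+4$ there, and we are in the second alternative. Otherwise, the monotonicity of $r\mapsto \Phi_v(r)e^{Cr}$ from Proposition~\ref{9} provides a sequence $r_k\downarrow 0$ with $F_v(r_k)>r_k^{n+4}$, and forces $\Phi_v(0^+)<n+4$; we work under this assumption and aim to show $\Phi_v(0^+)=n+3$.

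The first ingredient is the a priori bound $\mu\geq 3/2$. By Theorem~\ref{opt_reg_theorem}, $u\in C^{1,1/2}$; since $v(\cdot,0)=u(\cdot,0)\geq 0$ attains its minimum $0$ at $x=0$, we have $\nabla_x v(0,0)=0$. Moreover, from $\partial_y u(\cdot,0)=g'(2u(\cdot,0))$ on $\{u>0\}$ and continuity of $\partial_y u$, we deduce $\partial_y u(0,0^+)=g'(0^+)$, hence $\partial_y v(0,0^+)=0$. Combined with $v(0,0)=0$, the $C^{1,1/2}$ bound yields $|v(z)|\leq C|z|^{3/2}$ near the origin, so $F_v(r)\leq C r^{n+3}$, i.e.\ $\mu\geq 3/2$.

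For the blow-up, set $v_k:=v_{r_k}$ as in \eqref{defin vr}. Changing variables and using Lemma~\ref{8}, Lemma~\ref{10} and Proposition~\ref{9} one obtains
\[
\int_{B_R}|\nabla v_k|^2\,dz\;\leq\; C(R)\bigl(\Phi_v(R r_k)-n\bigr),
\]
together with a uniform $L^2$ bound on spheres, so $\{v_k\}$ is bounded in $H^1_{\loc}(\R^{n+1})$. Extracting a subsequence, $v_k\rightharpoonup v_0$ in $H^1_{\loc}$, strongly in $L^2_{\loc}$ and on $\partial B_1$, with $\int_{\partial B_1}v_0^2=1$ and $v_0\not\equiv 0$. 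On $\{v_k>0\}\cap\{y=0\}$ the rescaled boundary condition reads
\[
\partial_y v_k \;=\; \frac{r_k}{d_{r_k}}\bigl(g'(2d_{r_k}v_k)-g'(0^+)\bigr) \;=\; 2 r_k g''(0^+) v_k + O(r_k d_{r_k} v_k^2),
\]
which vanishes as $r_k\to 0$; combined with $v_k\geq 0$ and $\partial_y v_k\leq 0$ on $\{y=0\}$, we obtain that $v_0$ is a nontrivial, $y$-even, global solution of the classical Signorini problem.

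Finally, Proposition~\ref{9} together with scale invariance of Almgren's frequency for the limit problem forces $v_0$ to have constant frequency $\Phi_v(0^+)$, hence to be homogeneous of degree $\mu$. The classical classification of nontrivial homogeneous global Signorini solutions (even in $y$, nonnegative on $\{y=0\}$) states that the admissible degrees form $\{3/2\}\cup[2,\infty)$, which yields $\Phi_v(0^+)=n+3$ or $\Phi_v(0^+)\geq n+4$. The main difficulty is the careful passage to the limit in the nonlinear boundary term and the verification that $v_0$ inherits the complementarity condition $v_0\,\partial_y v_0 = 0$; granted this, the homogeneity extraction and the classification give the dichotomy.
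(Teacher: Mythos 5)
Your overall strategy (blow up, pass to the Signorini limit, invoke the gap theorem of Athanasopoulos–Caffarelli–Salsa) matches the paper's proof of the blow-up case, but the way you split into cases is different from the paper, and contains a logical slip that must be repaired.

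\textbf{The logical slip.} You claim that if $F_v(r)\leq r^{n+4}$ fails on every right neighborhood of $0$, then this ``forces $\Phi_v(0^+)<n+4$.'' That implication is false: for instance $F_v(r)=2r^{n+4}$ gives $F_v(r)>r^{n+4}$ for all $r$, yet $\Phi_v(r)=n+4$ identically, so $\Phi_v(0^+)=n+4$. More generally, having a sequence $r_k\downarrow 0$ with $F_v(r_k)>r_k^{n+4}$ says nothing about whether $\Phi_v(0^+)$ is strictly below $n+4$; the monotonicity of $\Phi_v(r)e^{Cr}$ does not help here. The correct and simplest repair is to dichotomize directly on $\Phi_v(0^+)$: if $\Phi_v(0^+)\geq n+4$ there is nothing to prove; if $\Phi_v(0^+)<n+4$, \emph{then} one has $\Phi_v(r)<n+4$ for all small $r$ (since $\Phi_v(r)e^{Cr}\to\Phi_v(0^+)$), which in turn forces $F_v(r)\geq r^{n+4}$ there and lets the blow-up machinery (Lemma~\ref{8}, Lemma~\ref{10}) run. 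After this fix, your blow-up argument proceeds essentially as in the paper's Proposition~\ref{blow up limit}: boundedness in $H^1_{\loc}$ from the frequency bound, passage to the limit in the boundary condition (where the factor $\frac{r_k}{d_{r_k}}\bigl(g'(2d_{r_k}v_k)-g'(0^+)\bigr)=2r_kv_kg''(\xi)=O(r_k)$ indeed vanishes with no need for $d_r/r^2\to\infty$), homogeneity of the limit from constancy of the limiting frequency, and the classification in $[3/2,2)$.

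\textbf{Comparison with the paper.} The paper instead splits on $\liminf_{r\to 0^+}d_r/r^2$. When this liminf is finite, it shows $\Phi_v(0^+)\geq n+4$ by an elementary integration of the frequency (the logarithmic growth argument), with no blow-up; when the liminf is infinite, it performs the blow-up. Your route, once repaired as above, avoids the logarithmic argument altogether by trivially disposing of the case $\Phi_v(0^+)\geq n+4$, which is a genuine simplification of the proof of this particular proposition. On the other hand, the paper's phrasing in terms of $d_r/r^2$ is what it then feeds into Theorem~\ref{free_bound_theorem} and the free-boundary analysis, so the extra mileage of the $\liminf d_r/r^2$ dichotomy is not wasted. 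Finally, your independent derivation of $\mu\geq 3/2$ from the $C^{1,1/2}$ regularity is a nice complement that the paper does not spell out at this point (it relies on the ACS classification to produce $\mu=3/2$ once $\mu<2$ is known), and it conveniently rules out the degree-$1$ cone $-|y|$ from the blow-up alternatives before invoking the gap theorem.
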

We first prove the proposition above in the case 
$$
\liminf_{r \to 0^+} \frac{d_r}{r^2} < + \infty.
$$
\begin{proof}[Proof of Proposition~\ref{Phi} when $\liminf_{r\to 0^+} d_r/r^2 < + \infty$]

\mbox{ }

\smallskip

\noindent
We will show that in this case we always have $\Phi_v (0^+) \geq n+4$.
Indeed, by assumption there exists $C > 0$ such that
$$
\frac{d_r^2}{r^4} = \frac{F_v (r)}{r^{n+4}} \leq C \qquad \text{ for every } r \in (0, 1). 
$$
We then have two possibilities (see also the second part of the proof of \cite[Lemma~6.1]{CSS}). 

\vspace{.2cm}

\noindent
\textbf{Case 1:} there exists a sequence $(r_j)_{j \in \mathbb{N}}$ with $r_j \to 0$ 
such that
$$
F_v (r_j) < r_j^{n+4} \quad \text{ for $j$ sufficiently large}.
$$ 
Then, $\Phi_v (r_j)=  n+4$ for $j$ sufficiently large, and therefore
$\Phi_v (0^+) =  n+4$.

\vspace{.2cm}

\noindent
\textbf{Case 2:} for $r$ sufficiently small
$$
r^{n+4} \leq F_v (r) \leq C r^{n+4}.
$$
Then, 
\begin{equation} \label{logarithm}
( n+4 ) \log r \leq \log F_v (r) \leq \log C + ( n+4 ) \log r.
\end{equation}
Suppose now, by contradiction, that there exists $\eta > 0$ such that
$$
\Phi_v (r) \leq n+4 - \eta \quad \text{ for $r$ sufficiently small},
$$
and let $(r_j)_{j \in \mathbb{N}}$ be a strictly decreasing sequence  with $r_j \to 0$. 
Then, thanks to \eqref{logarithm}, for every $k, l \in \mathbb{N}$ with $k < l$ we have
\begin{align*}
( n+4 ) \log r_k \leq \log F_v (r_k) \quad \text{ and } \quad 
\log F_v (r_l) \leq \log C + ( n+4 ) \log r_l.
\end{align*}
Therefore, by the definition of $\Phi_v$,
\begin{align*}
&( n+4 ) ( \log r_k - \log r_l) - \log C
\leq \log F_v (r_k) - \log F_v (r_l) 
 = \int_{r_l}^{r_k} \frac{d}{dr} \log F_v (r) \, dr \\
&= \int_{r_l}^{r_k} \frac{\Phi_v (r)}{r} \, dr 
\leq (n+4 - \eta) ( \log r_k - \log r_l),  
\end{align*}
which is impossible if we choose $\log r_k - \log r_l \to \infty$.
\end{proof}
In the next proposition we consider the case 
$$
\liminf_{r \to 0^+} \frac{d_r}{r^2} = + \infty.
$$
\begin{proposition} \label{blow up limit}
Let the assumptions of Theorem~\ref{opt_reg_theorem} be satisfied, 
suppose that $(0, 0)$ is a free boundary point for $u$,
and that $u(x,0)\geq 0$ in $B_{\overline r_0}^n$.
Let $v$ be given by \eqref{v intro}, and let $F_v$ and $\Phi_v$ be as in Proposition~\ref{9}.
Define $v_r$ as in  \eqref{defin vr}, and assume that
$$
\liminf_{r \to 0^+} \frac{d_r}{r^2} = + \infty.
$$
Then, there exists a sequence $(r_k)_{k \in \mathbb{N}}$ with $r_k \to 0$,
and a homogeneous function $v_{\infty} \in W^{1,2} (B_1)$
with homogeneity degree $1/2(\Phi_v (0^+) - n)$,
such that
$$
v_{r_k} \rightharpoonup v_{\infty} \quad \text{ weakly in } W^{1,2} (B_1),
$$
and
\begin{equation} \label{convunif}
v_{r_k} \to v_{\infty} \text{ in $C^{1, \gamma}$ on compact subsets of } B_{1} \cap \{ y \geq 0 \},
\end{equation}
for any $\gamma \in (0, 1/2)$.
Moreover, $v_{\infty}$ satisfies the classical Signorini problem in $B_1$
and is even with respect to $y$:
\begin{equation} \label{signorini}
\begin{cases}
\Delta v_{\infty} = 0 & \text{ in } B_1 \setminus \{ y = 0 \}, \\
v_{\infty} \geq 0 & \text{ on } B^n_1, \\
\partial_y v_{\infty} \leq 0 & \text{ on } B^n_1, \\
v_{\infty} \partial_y v_{\infty} = 0 & \text{ on } B^n_1, \\
v_{\infty} (x, - y) = v_{\infty} (x, y) & \text{ in } B_1. \\
\end{cases}
\end{equation}
Finally, it holds that $\Phi_v (0^+) = n + 3$ and that, up to a multiplicative constant 
and to a change of variables, we have 
\begin{equation} \label{case a}
v_{\infty} (x, y) = \rho^{3/2} \cos \frac{3}{2} \theta, 
\end{equation}
where $\rho^2 = x_n^2 + y^2$ and $\tan \theta = y/x_{n}$.
\end{proposition}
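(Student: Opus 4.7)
The plan has four components: (i) extract a subsequential $W^{1,2}$ limit $v_\infty$ of the rescalings $v_r$, together with good convergence up to $\{y=0\}$; (ii) pass to the limit in the boundary condition to obtain the Signorini system~\eqref{signorini}; (iii) use the frequency formula to force $v_\infty$ to be homogeneous of the correct degree; (iv) exclude $\Phi_v(0^+)\geq n+4$ and invoke the classical classification of $3/2$-homogeneous Signorini profiles.

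\textbf{Compactness and limiting PDE.} The hypothesis $\liminf_{r\to 0^+} d_r/r^2 = +\infty$ means $F_v(r) \gg r^{n+4}$ for all small $r$, so by Proposition~\ref{9} the frequency $\Phi_v(r)= rF_v'(r)/F_v(r)$ is uniformly bounded on $(0,\overline r_0)$. Combining Lemma~\ref{1} and Lemma~\ref{8}, one obtains $\int_{B_r}|\nabla v|^2\,dz \leq C F_v(r)/r = C d_r^2 r^{n-1}$, which after rescaling yields $\|\nabla v_r\|_{L^2(B_1)}\leq C$; together with $\|v_r\|_{L^2(\partial B_1)}=1$ from \eqref{bounded trace}, this produces a subsequence $v_{r_k}\rightharpoonup v_\infty$ weakly in $W^{1,2}(B_1)$. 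The rescaled $v_r$ is harmonic off $\{y=0\}$, even in $y$, nonnegative on $B_1^n$, and on $\{v_r>0\}\cap B_1^n$ satisfies $\partial_y v_r(x,0) = (r/d_r)\bigl(g'(2 d_r v_r(x,0)) - g'(0^+)\bigr)$, whose absolute value is bounded by $Cr|v_r(x,0)|\to 0$ since $|g'(s)-g'(0^+)|\leq \|g''\|_{L^\infty} s$. Interior Schauder estimates away from $\{y=0\}$, together with the uniform $C^{1,1/2}$ theory for the Signorini problem adapted to this asymptotically vanishing nonlinear perturbation, upgrade the convergence to $C^{1,\gamma}$ on compact subsets of $B_1\cap\{y\geq 0\}$ for every $\gamma\in(0,1/2)$; passing to the limit yields \eqref{signorini}.

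\textbf{Homogeneity and identification of the degree.} A direct change of variables gives $F_{v_r}(\rho)=F_v(r\rho)/(d_r^2 r^n)$, hence $\Phi_{v_r}(\rho)=\Phi_v(r\rho)$ for $\rho\in(0,1)$ and $r$ sufficiently small. Since $\Phi_v(r\rho)\to \Phi_v(0^+)$ by Proposition~\ref{9}, and the $C^{1,\gamma}$ convergence up to $\{y=0\}$ lets one pass to the limit both in $F_{v_r}$ and in $\int_{\partial B_\rho}v_r (v_r)_\nu\,d\mathcal{H}^n$, one obtains $\rho F_{v_\infty}'(\rho)/F_{v_\infty}(\rho)\equiv \Phi_v(0^+)$ for $\rho\in(0,1)$. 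For the pure Signorini problem \eqref{signorini}, the classical Almgren-type monotonicity shows that this constancy forces $v_\infty$ to be homogeneous of degree $\mu:=(\Phi_v(0^+)-n)/2$ (equality in the Cauchy--Schwarz step inside the derivation of monotonicity produces $(\partial_r - \mu/|z|)v_\infty\equiv 0$).

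\textbf{Exclusion of $\Phi_v(0^+)\geq n+4$ and classification.} By Proposition~\ref{Phi}, $\Phi_v(0^+)\in\{n+3\}\cup[n+4,\infty)$. If the second alternative held, monotonicity of $r\mapsto\Phi_v(r)e^{Cr}$ would give $\Phi_v(r)\geq (n+4)e^{-Cr}$; integrating $F_v'/F_v=\Phi_v/r$ from $r$ to a fixed small $r_1$ then yields $F_v(r)\leq C r^{n+4}$, i.e.\ $d_r^2/r^4\leq C$, contradicting the standing hypothesis. Hence $\mu=3/2$. Since $v_\infty$ is a nontrivial ($\|v_\infty\|_{L^2(\partial B_1)}=1$ by strong trace convergence), $y$-even, $3/2$-homogeneous solution of the Signorini problem, the classical classification (separation of variables on $\mathbb{S}^{n}$, as in \cite{AC,CF}) identifies it, up to a positive multiplicative constant and a rotation in the $x$-variables, with~\eqref{case a}. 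The main technical obstacle is the compactness step: producing a convergence of $v_{r_k}$ strong enough near the hyperplane $\{y=0\}$ to justify passing to the limit both in the Signorini-type boundary condition and in the frequency $\Phi_{v_r}(\rho)$; this is achieved by viewing the rescaled problem as an asymptotically vanishing perturbation of the pure Signorini problem and applying the uniform boundary regularity provided by Theorem~\ref{opt_reg_theorem}.
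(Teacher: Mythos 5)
Your proposal follows the same route as the paper: uniform $W^{1,2}$ bound via the frequency formula (Lemma~\ref{1}, Lemma~\ref{8}, and the monotonicity), weak $W^{1,2}$ plus uniform $C^{1,1/2}$ giving $C^{1,\gamma}$ convergence up to $\{y\geq0\}$, passing to the limit in the nonlinear boundary condition (with the prefactor $r_k/d_{r_k}$ killing the perturbation), constancy of $\Phi_{v_\infty}$ from $\Phi_{v_{r_k}}(\rho)=\Phi_v(r_k\rho)\to\Phi_v(0^+)$, homogeneity of degree $\mu=(\Phi_v(0^+)-n)/2$ via the Almgren equality case, and finally $\mu=3/2$ with the classification~\eqref{case a}. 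The one point where you are more explicit than the paper is the exclusion of $\Phi_v(0^+)\geq n+4$: you integrate $\Phi_v(r)\geq(n+4)e^{-Cr}$ to force $F_v(r)\leq Cr^{n+4}$, whereas the paper just records that $d_r/r^2\to\infty$ gives $\mu<2$; the content is the same. A minor nit: nontriviality of $v_\infty$ (i.e. $\|v_\infty\|_{L^2(\partial B_1)}=1$) follows from compactness of the trace embedding $W^{1,2}(B_1)\hookrightarrow L^2(\partial B_1)$, since the $C^{1,\gamma}$ convergence you invoke is only on compact subsets of $B_1\cap\{y\geq 0\}$ and does not directly reach $\partial B_1$.
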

\begin{proof}[Proof of Proposition~\ref{blow up limit} and conclusion of 
proof of Proposition~\ref{Phi}]

\mbox{ }

\smallskip

Since $d_r/r^2 \to \infty$, for $r$ sufficiently small we have $F_v (r) > r^{n+4}$.
Then, thanks to Proposition~\ref{9} and by \eqref{used also later}, 
for $r < \overline{r}_0$ we have
\begin{align*}
\Phi_v (\overline{r}_0) e^{C (\overline{r}_0 - r)}  \geq \Phi_v (r)
= 2 r \frac{\displaystyle \int_{\partial B_r} v \, v_{\nu} \, d \mathcal{H}^n}
{\displaystyle \int_{\partial B_r} v^2 \, d \mathcal{H}^n} + n.
\end{align*}
Therefore, by \eqref{vnu Dir} and the definition of $v_r$,
\begin{align*}
\Phi_v (\overline{r}_0) e^{C (\overline{r}_0- r)} - n
\geq 2 r \,\frac{ \displaystyle \int_{\partial B_r} v \, v_{\nu} \, d \mathcal{H}^n}
{\displaystyle \int_{\partial B_r} v^2 \, d \mathcal{H}^n} 
= 2 r \,\frac{\frac12 \displaystyle \int_{B_r} | \nabla v |^2 \, d z }{\displaystyle \int_{\partial B_r} v^2 \, d \mathcal{H}^n} =\int_{B_1} | \nabla v_r |^2 \, d z,
\end{align*}
for $r$ sufficiently small.
Consider now a sequence $r_k \to 0$. 
By the previous inequality and thanks to \eqref{bounded trace}, the sequence 
$(v_{r_k})_{k \in \mathbb{N}}$ is bounded in $W^{1,2} (B_1)$. Thus, up to subsequences,
$$
v_{r_k} \rightharpoonup v_{\infty} \text{ weakly in } W^{1,2} (B_1),
$$
for some $v_{\infty} \in W^{1,2} (B_1)$.
Thanks to the uniform $C^{1,1/2}$ regularity for solutions, we also have that \eqref{convunif}
holds.
Let us show that $v_{\infty} \partial_y v_{\infty} = 0$ on $B^n_1$, 
since the other conditions in \eqref{signorini} are a direct consequence of \eqref{convunif}.
Recalling the definition of $v_{r_k}$, from the identity 
$$
v (rx, 0) \left[ \partial_y v (rx, 0) - g' (2 v (rx, 0)) + g' (0^+) \right] = 0 \quad \text{ for every } x \in B_1^n
$$
it follows that, for every $k \in \mathbb{N}$,
\begin{equation} \label{equation v_k}
v_{r_k} (x, 0) \left[ \partial_y v_{r_k} (x, 0) - \frac{r_k}{d_{r_k}} ( g' ( 2 d_{r_k} v_{r_k} (x, 0)) - g' (0^+) ) \right]=0.
\end{equation}
Thanks to \eqref{convunif}, since 
$$
\frac{r_k}{d_{r_k}} | g' ( 2 d_{r_k} v_{r_k} (x, 0) ) - g' (0^+) | \leq 2 r_k v_{r_k} (x, 0) \| g'' \|_{L^{\infty}} \stackrel{k\to \infty}{\to} 0
\qquad \text{ for every } x \in B_1^n, 
$$ 
taking the limit in \eqref{equation v_k} we obtain 
$$
v_{\infty} \partial_y v_{\infty} = 0  \qquad \text{ in } B^n_1.
$$
To show that $v_\infty$ is homogeneous, let us first prove that $\Phi_{v_{\infty}}$ is constant for $r$ sufficiently small.
Indeed, let $r < s \ll 1$. 
A direct calculation shows that
$$
\Phi_{v_{r_k}} (r) - \Phi_{v_{r_k}} (s) = \Phi_v (r_k r) - \Phi_v (r_k s) 
\qquad \text{ for every } k \in \mathbb{N}. 
$$ 
Thanks to \eqref{convunif}, taking the limit as $k \to \infty$ we obtain
$$
\Phi_{v_{\infty}} (r) - \Phi_{v_{\infty}} (s) = 0,
$$
where we used the existence of the limit $\lim_{r\to 0^+}\Phi_v (r)$, which follows from Proposition~\ref{9}.
Since $v_{\infty}$ satisfies the Signorini problem \eqref{signorini}, from \cite[Lemma 1]{ACS} 
it follows that $v_{\infty}$ is homogeneous and that 
$$
\Phi_v (0^+) = 2 \mu + n, 
$$
where $\mu$ is  the homogeneity degree of $v_{\infty}$.
Therefore, 
$$
\mu = \frac{\Phi_v (0^+) - n}{2}.
$$
Arguing as in \cite[Lemma~6.6]{CSS}, one gets 
that $F_v (r) \leq C r^{2 \mu + n}$. Since $d_r/r^2\to \infty$, this implies $\mu<2$,
and one concludes as in \cite[Section 4]{ACS}
that $\mu =3/2$ and that the function $v_{\infty}$ is given by \eqref{case a}.
\end{proof}
\end{section}

\begin{section}{$C^{1, \alpha}$ regularity of the free boundary for $\mu = 3/2$.}

We now study the regularity of the free boundary in the special case in which 
$\Phi_v (0^+) = n + 3$. Note that, by the argument in the previous section,
this corresponds to the case
$$
\liminf_{r \to 0^+} \frac{d_r}{r^2} = + \infty.
$$
 We start by proving the $C^1$ regularity.
\begin{lemma} \label{C1 free}
Let the assumptions of Theorem~\ref{opt_reg_theorem} be satisfied, 
suppose that $(0, 0)$ is a free boundary point for $u$,
and that $u(x,0)\geq 0$ in $B_{\overline r_0}^n$.
Let $v$ be given by \eqref{v intro}, and let $\Phi_v$ be as in Proposition~\ref{9}.
Assume that $\Phi_v (0^+) = n+3$, 
and choose a coordinate system in $\R^n$ such that \eqref{case a} holds true.
Then, for every $\overline{c} > 0$ there exists $\overline{\rho} = \overline{\rho} (\overline{c}) > 0$
with the following property:
For every $\tau \in \mathbb{S}^n \cap \{ y =  0 \}$ with $\tau \cdot \mathbf e_n \geq \overline{c}$ we have
\begin{equation} \label{tau_non_neg}
\partial_{\tau} v (z) \geq 0, \qquad \text{ for every } z \in B_{\overline{\rho}}. 
\end{equation}
In addition, near the origin the free boundary of $v$  is the graph of a $C^1$   function 
$$
x_n = f (x_1, \ldots, x_{n-1}).
$$
 \end{lemma}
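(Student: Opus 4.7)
The plan is to transfer the explicit positivity of $\partial_\tau v_\infty$ to $v$ via the $C^{1,\gamma}$ compactness of Proposition~\ref{blow up limit}, and then to read off the graph structure of $\partial K_u$ from cone-monotonicity. First I would compute $\partial_\tau v_\infty$ directly from \eqref{case a}: using polar coordinates $(\rho,\theta)$ in the $(x_n,y)$-plane ($x_n=\rho\cos\theta$, $y=\rho\sin\theta$), one has
\[
\partial_{x_n}v_\infty=\tfrac{3}{2}\rho^{1/2}\cos(\theta/2),\qquad \partial_{x_i}v_\infty\equiv 0\ \text{for}\ i=1,\ldots,n-1,
\]
so that for every $\tau\in\mathbb{S}^{n}\cap\{y=0\}$ with $\tau\cdot\mathbf{e}_n\geq\overline{c}$,
\[
\partial_\tau v_\infty(z)=\tau_n\tfrac{3}{2}\rho^{1/2}\cos(\theta/2)\geq\tfrac{3}{2}\overline{c}\,\rho^{1/2}\cos(\theta/2)\geq 0,
\]
with strict inequality on $B_1\setminus\mathcal{R}$, where $\mathcal{R}:=\{(0,\ldots,0,x_n,0):x_n\leq 0\}$ is precisely the coincidence set of $v_\infty$ on the thin space.

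To prove \eqref{tau_non_neg} I would argue by contradiction: suppose there exist $r_k\to 0^+$, directions $\tau_k\in\mathbb{S}^n\cap\{y=0\}$ with $\tau_k\cdot\mathbf{e}_n\geq\overline{c}$, and points $z_k\in B_{r_k}\setminus\{0\}$ with $\partial_{\tau_k}v(z_k)<0$. Set $\widetilde{r}_k:=2|z_k|\to 0$, so $z_k/\widetilde{r}_k\in\partial B_{1/2}$. Since $\Phi_v(0^+)=n+3$ forces $d_r/r^2\to+\infty$, Proposition~\ref{blow up limit} applies: along a subsequence $v_{\widetilde{r}_k}\to v_\infty$ in $C^{1,\gamma}$ on $\overline{B_{3/4}}\cap\{y\geq 0\}$, and by evenness in $y$ the convergence extends to $\overline{B_{3/4}}$. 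Extracting further subsequences so that $z_k/\widetilde{r}_k\to z_*\in\partial B_{1/2}$ and $\tau_k\to\tau_*$ with $\tau_*\cdot\mathbf{e}_n\geq\overline{c}$, and using the identity $\partial_\tau v_r(z)=(r/d_r)\partial_\tau v(rz)$, the inequality $\partial_{\tau_k}v_{\widetilde{r}_k}(z_k/\widetilde{r}_k)<0$ passes to the limit to give $\partial_{\tau_*}v_\infty(z_*)\leq 0$, which by the previous step forces $z_*=-\tfrac{1}{2}\mathbf{e}_n\in\mathcal{R}$. The main obstacle is closing the contradiction at this degenerate point, because the limiting inequality is only an equality. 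The idea is to exploit that $v_\infty\equiv 0$ on $\{y=0\}$ in a neighborhood of $z_*$, so the coincidence set of $v_{\widetilde{r}_k}$ covers an open subset of the thin space around $z_*$ as $k\to\infty$; there $v_{\widetilde{r}_k}$ attains its nonnegative minimum and hence $\partial_{\tau_k}v_{\widetilde{r}_k}\equiv 0$. A Hopf/boundary-Harnack argument for the harmonic function $\partial_{\tau_k}v_{\widetilde{r}_k}$ on the half-ball around $z_*$, combined with the uniform $C^{0,\gamma}$ convergence on the remaining portion of the boundary, should then force a quantitative improvement incompatible with $\partial_{\tau_k}v_{\widetilde{r}_k}(z_k/\widetilde{r}_k)<0$ for $k$ large.

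Once \eqref{tau_non_neg} is established, the $C^1$ graph structure follows by a standard cone-monotonicity argument: integration of $\partial_\tau v\geq 0$ along segments shows that if $v(x_0,0)>0$ with $x_0\in B^n_{\overline{\rho}(\overline{c})}$, then $v(x_0+t\tau,0)>0$ for every $t>0$ and every $\tau$ with $\tau\cdot\mathbf{e}_n\geq\overline{c}$ such that $x_0+t\tau\in B^n_{\overline{\rho}}$; equivalently, the coincidence set $B^n_{\overline{\rho}}\setminus K_u$ contains the backward cone about $-\mathbf{e}_n$ of aperture $\arccos\overline{c}$ at each of its points. Hence $\partial K_u\cap B^n_{\overline{\rho}}$ is the graph $x_n=f(x_1,\ldots,x_{n-1})$ of a Lipschitz function with Lipschitz constant at most $\overline{c}/\sqrt{1-\overline{c}^2}$. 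Since this conclusion holds for every $\overline{c}>0$ on a ball that shrinks with $\overline{c}$, letting $\overline{c}\to 0^+$ shows that $f$ is differentiable at the origin with $\nabla f(0)=0$. Applying the identical argument at every free boundary point near the origin---each of which is again a regular point whose blow-up has the form \eqref{case a} up to a rotation of the coordinates in $\{y=0\}$, by Proposition~\ref{blow up limit} and the (open) condition $\Phi_v(0^+)=n+3$---shows that the tangent direction of $\partial K_u$ at each such point depends continuously on the base point, which is the $C^1$ regularity of $f$ near the origin.
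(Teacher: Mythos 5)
Your overall structure for $\eqref{tau_non_neg}$ — reduce by contradiction and blow-up to a statement about points where $\partial_{\tau_*}v_\infty$ vanishes, then close the contradiction with a quantitative boundary estimate — is genuinely different from the paper's route, which sidesteps compactness entirely and instead proves an \emph{a priori} barrier lemma (Lemma~\ref{lemma h}): a harmonic $h$ satisfying quantitative versions of $h\geq 0$ away from a thin slab, $h\geq c_0$ away from the plate, $h\geq-\sigma(\eta)$ and $\partial_y h\leq\sigma(\eta)$ on $\{h\neq 0\}$, must be $\geq 0$ in $B_{1/2}$; this is established by sliding the harmonic polynomial $|x-\overline x|^2-ny^2$ and invoking Hopf's lemma at a touching point on the plate. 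The paper then simply checks that $h_k=\partial_\tau v_{r_k}$ satisfies those hypotheses with $\sigma(\eta)=C\eta^{1/2}$. That argument is cleaner because it is uniform over all directions $\tau$ and all touching points at once, with no case distinction.

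Two things in your argument need repair. First, your identification of the zero set of $\partial_{\tau_*}v_\infty$ is wrong: since $v_\infty$ depends only on $(x_n,y)$, one has $\partial_{\tau_*}v_\infty=(\tau_*)_n\,\tfrac32\,\mathrm{Re}(x_n+iy)^{1/2}$, which vanishes on the entire half-plate $\{x_n\leq 0,\ y=0\}$, not merely on the ray $\mathcal{R}=\{x'=0,\ x_n\leq 0,\ y=0\}$; in particular this set is \emph{not} the coincidence set only up to the $x'$-directions. Consequently $z_*\in\partial B_{1/2}\cap\{x_n\leq 0,\ y=0\}$ is an $(n-2)$-sphere of possibilities, not the single point $-\tfrac12\mathbf e_n$. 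Your subsequent step — that the contact set of $v_{\widetilde r_k}$ must cover a full thin neighbourhood of $z_*$ for large $k$ (which does follow from $\partial_y v_\infty(z_*)<0$ together with the smallness $|\partial_y v_{r_k}|\leq Cr_k$ on $\{v_{r_k}>0\}$) — is valid only when $(x_n)_*<0$, i.e.\ when $z_*$ is interior to the coincidence set of $v_\infty$. When $(x_n)_*=0$, $z_*$ lies on the free boundary of $v_\infty$, the contact set of $v_{\widetilde r_k}$ near $z_*$ is only half a neighbourhood, $h_k$ does \emph{not} vanish on a full thin ball about $z_*$, and the boundary-Harnack comparison you propose breaks down. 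This case is not addressed, and it is precisely the situation the paper's barrier handles uniformly.

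Second, even in the interior case $(x_n)_*<0$, your Hopf/boundary-Harnack step is only sketched. It does close, but the precise mechanism should be stated: odd-reflect $h_k-h_\infty$ across the plate (both vanish on the thin ball), apply the interior gradient estimate to get $|h_k-h_\infty|\leq C\epsilon_k y/\delta$, and compare with $h_\infty(x,y)\geq c\,y$ coming from $\partial_y h_\infty\geq c_0>0$ on the plate near $z_*$ (where $\partial_y h_\infty=(\tau_*)_n\tfrac34|x_n|^{-1/2}$). You leave this at ``should then force a quantitative improvement''. Finally, for the $C^1$ conclusion you assert the condition $\Phi_v(0^+)=n+3$ is ``open'' without justification; the paper's proof of this rests on the upper-semicontinuity of $\hat x\mapsto\Phi_{v_{\hat x}}(0^+)$ (infimum of continuous functions $\Phi_{v_{\hat x}}(r)e^{Cr}$) combined with the gap $\{n+3\}\cup[n+4,\infty)$ from Proposition~\ref{Phi}, which is a nontrivial observation you should supply, together with the uniformity of $\widetilde r(L)$ in $\hat x$ that the paper uses to pass from Lipschitz at arbitrarily small slope to genuine $C^1$.
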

Before giving the proof of Lemma \ref{C1 free} we make some useful observation 
on the tangential derivatives of the functions $v_{r_k}$ introduced in the previous section.

Let us fix $\overline{c} > 0$ and $\mathbf e \in \mathbb{S}^n \cap \{ y =  0 \}$ with $\mathbf e \cdot \mathbf e_n = 0$.
Choose now $a \geq \overline{c}$ and $b \in \mathbb{R}$ such that $a^2 + b^2 = 1$, 
and define $h_k : B_1 :\to \mathbb{R}$ as the sequence of functions given by
\begin{equation} \label{h_k}
h_k := \partial_{\tau} v_{r_k} \qquad \text{ for every } k \in \mathbb{N}, 
\end{equation}
where $\tau:= a \mathbf e_n + b \mathbf e$.
For any $\eta \in (0, 1/(8n))$, thanks to \eqref{case a} and \eqref{convunif}, 
there exist $k_0 = k_0 (a, b, \eta)$ and $c_0 = c_0 (a, b , \eta)$ such that
the following properties are satisfied for $k > k_0$:
\begin{itemize}

\item[(i)] $\Delta h_k = 0$ in $B_{2/3} \cap \{ |y| > 0\}$;

\vspace{.2cm}

\item[(ii)] $h_k \geq 0$ in $B_{2/3} \cap \{ |y| > \eta \}$;

\vspace{.2cm}

\item[(iii)] $h_k \geq c_0$ in $B_{2/3} \cap \left\{ |y| > \frac{1}{8 n} \right\}$; 

\vspace{.2cm}

\item[(iv)] $h_k \geq - C \eta^{1/2}$ in $B_{2/3}$,

\end{itemize}
where property (iv) follows from the optimal regularity and (ii).
Let us show that we also have 
\begin{itemize}


\item[(v)] $\partial_y h_k \leq C \eta^{1/2}$ on $B^n_{2/3} \cap \{ h_k \neq 0 \}$.

\end{itemize}
To this aim, first of all observe that $B^n_{2/3} \cap \{ h_k \neq 0 \} \subset B^n_{2/3} \cap \{ v_{r_k} \neq 0 \}$. 
Indeed, if $x \in B^n_{2/3}$ is such that $v_{r_k} (x, 0) = 0$, then
$$
h_k (x, 0) = (\partial_{\tau} v_{r_k} ) (x, 0) = \frac{r_k}{d_{r_k}} ( \partial_{\tau} v ) (r_k x , 0) = 0,
$$
by nonnegativity of $v$ and optimal regularity.

Let now $x \in B^n_{2/3}$ be such that $h_k (x, 0) \neq 0$. Then we have $v_{r_k} (x, 0) > 0$
and, for $k$ sufficiently large,
\begin{align*}
(\partial_y h_k ) (x, 0)
&= \frac{r_k}{d_{r_k}} \partial_{\tau} \left\{ g' (2 v (r_{k} x,0)) - g' (0^+) \right\}
= 2 r_k  g'' (2 v (r_{k} x,0)) h_k  \leq C \eta^{1/2},
\end{align*}
where we used (iv).
We now consider a version of \cite[Lemma 5]{ACS} which is useful for our purposes.
\begin{lemma} \label{lemma h}
Let $0 < \eta < 1/(8n)$, let $C, c_0 > 0$, and let $\sigma : [0, 1] \to [0, \infty)$ 
be a continuous function with $\sigma (0)=0$.
Suppose that $h : B_1 :\to \mathbb{R}$ satisfies the following assumptions:

\begin{itemize}

\item[(i)] $\Delta h = 0$ in $B_1 \cap \{ |y| > 0\}$;

\vspace{.2cm}

\item[(ii)] $h \geq 0$ in $B_1 \cap \{ |y| > \eta \}$;

\vspace{.2cm}

\item[(iii)] $h \geq c_0$ in $B_1 \cap \left\{ |y| > \frac{1}{8 n} \right\}$; 

\vspace{.2cm}

\item[(iv)] $h \geq - \sigma (\eta )$ in $B_1$,

\vspace{.2cm}

\item[(v)] $\partial_y h \leq \sigma (\eta )$ on $B^n_1 \cap \{ h \neq 0 \}$.

\end{itemize}
Then, there exists $\eta_0 = \eta_0 (n, c_0, \sigma)$ such that if $\eta < \eta_0$
we have $h \geq 0$ in $B_{1/2}$.
\end{lemma}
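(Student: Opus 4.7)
The strategy is to argue by contradiction using a compactness argument combined with Signorini-type rigidity. Suppose the statement fails: then there exist sequences $\eta_k \downarrow 0$ and functions $h_k:B_1\to\mathbb{R}$ satisfying (i)--(v) with $\eta=\eta_k$, together with points $z_k=(x_k,y_k)\in\overline{B_{1/2}}$ with $h_k(z_k)<0$. By (ii), $|y_k|\leq\eta_k\to 0$, so, up to a subsequence, $z_k\to z_*=(x_*,0)\in\overline{B_{1/2}}\cap\{y=0\}$.

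The first task is to extract a convergent subsequence. Since the hypotheses give no a priori $L^\infty$ upper bound on $h_k$, I would renormalize by setting $\tilde h_k:=h_k/M_k$ with $M_k:=\max\{1,\|h_k\|_{L^\infty(B_{3/4})}\}$, so that $|\tilde h_k|\leq1$ in $B_{3/4}$ and the rescaled hypotheses continue to hold with $c_0$ and $\sigma(\eta_k)$ replaced by $c_0/M_k$ and $\sigma(\eta_k)/M_k$. Interior estimates for harmonic functions on $B_{3/4}\cap\{|y|>\eta_k\}$ (where $\tilde h_k\geq 0$ by (ii)) together with boundary regularity coming from the one-sided gradient bound (v) yield, along a subsequence, locally uniform $C^{1,\gamma}$ convergence $\tilde h_k\to h_\infty$ up to $\{y=0\}$ from each side. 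The limit is harmonic in $B_{3/4}\setminus\{y=0\}$, satisfies $h_\infty\geq 0$ in $B_{3/4}$ (because $\eta_k$ and $\sigma(\eta_k)/M_k$ vanish in the limit), and obeys the Signorini inequality $\partial_y h_\infty\leq 0$ on $\{h_\infty>0\}\cap B^n_{3/4}$ (passing (v) to the limit). Moreover $h_\infty(z_*)\leq 0$, hence $h_\infty(z_*)=0$.

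The crucial point is that $h_\infty$ is nontrivial. If $M_k$ remains bounded, then (iii) directly gives $h_\infty\geq c>0$ on $\{|y|>1/(8n)\}$. If $M_k\to\infty$, one instead chooses the normalization $M_k=\|h_k\|_{L^\infty(B_{3/4})}$ and appeals to a Harnack chain connecting $\{|y|>1/(8n)\}$ (where $h_k\geq c_0$) to a point where $h_k$ is of order $M_k$: this forces $\|h_\infty\|_{L^\infty}\gtrsim 1$ and a quantitative nonvanishing region. In either case, the strong maximum principle applied to the nonnegative harmonic function $h_\infty$ on $B_{3/4}\cap\{y>0\}$ yields $h_\infty>0$ strictly in the open half-ball. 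Since $h_\infty(z_*)=0$, Hopf's boundary point lemma at $z_*\in\{y=0\}$ gives $\partial_y h_\infty(z_*)>0$ strictly (with the sign reflecting that $z_*$ is a boundary minimum from the upper side). On the other hand, by $C^{1,\gamma}$ convergence and (v), at any sequence of points $x\to x_*$ on $B^n_{3/4}$ with $h_\infty(x,0)>0$ one has $\partial_y h_\infty(x,0)\leq 0$, and passing to the limit gives $\partial_y h_\infty(z_*)\leq 0$; if instead $h_\infty\equiv 0$ on a whole neighborhood of $z_*$ in $B^n_{3/4}$, then the evenness in $y$ (inherited from the application) combined with Schwarz reflection forces $h_\infty\equiv 0$ locally, contradicting nontriviality by unique continuation. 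Either alternative contradicts the strict positivity from Hopf.

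The main obstacle is the control of the normalization constant $M_k$: without an a priori upper bound on $h_k$, one must argue carefully that the rescaled limit is nontrivial and retains a definite positive region, which is essential for the strong maximum principle step. A secondary but nontrivial issue is ensuring $C^{1,\gamma}$ convergence of $\tilde h_k$ up to $\{y=0\}$, so that the one-sided bound (v) passes to the limit as a genuine Signorini boundary condition; this uses the optimal-regularity machinery developed in Section~4 together with the fact that $\sigma(\eta_k)/M_k\to 0$.
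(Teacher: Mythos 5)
Your proposal takes a genuinely different route from the paper, and unfortunately it has several gaps that prevent it from being a complete proof.

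The paper's argument is a short, explicit barrier computation: assuming $h(\overline{z})<0$ for some $\overline{z}\in B_{1/2}$, it sets $Q=\{|x-\overline{x}|<1/3,\,0<|y|<1/(4n)\}$, $P(x,y)=|x-\overline{x}|^2-ny^2$, and $w=h+\delta P-\sigma(\eta)y$, then checks that $w$ cannot attain a negative minimum on any portion of $\partial Q$: on $\{y>1/(8n)\}$ via (iii), on $\{\eta\le y<1/(8n)\}$ via (ii), on the lateral boundary via (iv), and on $\{y=0\}$ via Hopf combined with (v). This gives an effective $\eta_0$ and never needs any compactness, upper bound on $h$, or passage to a limit.

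Your compactness argument has the following concrete problems. First, the hypotheses give \emph{no} a priori upper bound on $h_k$, and your fallback --- a Harnack chain from $\{|y|>1/(8n)\}$ to the point realizing $M_k$ --- does not go through, precisely because the maximum of a harmonic function on $B_1\setminus\{y=0\}$ can sit on $\{y=0\}$, which the interior Harnack inequality cannot reach; the degenerate case $M_k\to\infty$ (in which (iii) becomes $\tilde h_k\ge c_0/M_k\to0$) is therefore not ruled out, and the limit $h_\infty$ could be trivial. Second, the claimed $C^{1,\gamma}$ convergence up to $\{y=0\}$ is not a consequence of (v), which is only a one-sided inequality on $\partial_y h$ on the set $\{h\neq0\}$; you are implicitly importing the optimal regularity of $v$ from the application, but the lemma is stated for an abstract $h$ and must be proved under its own hypotheses. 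Third, and most seriously, the final contradiction (Hopf gives $\partial_y h_\infty(z_*,0^+)>0$ while (v) gives $\partial_y h_\infty(z_*,0^+)\le0$) requires $z_*\in\overline{\{h_\infty>0\}\cap B^n}$, and when this fails you invoke evenness in $y$ --- which is \emph{not} a hypothesis of the lemma --- together with unique continuation, which does not apply: if $h_\infty\equiv0$ on a neighborhood of $z_*$ in $\{y=0\}$ and $\partial_y h_\infty(\cdot,0^+)>0$ there (as Hopf indeed forces), this is perfectly consistent with a nontrivial harmonic $h_\infty$ in the upper half-ball (e.g.\ $y$ itself), so no contradiction is obtained. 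In short, the compactness strategy is not a priori unsalvageable, but filling these gaps would require substantial additional estimates (a boundary Harnack principle suited to the Signorini-type condition, a genuine nondegeneracy argument for the blow-down, and a correct treatment of the degenerate free-boundary case), whereas the paper's barrier argument sidesteps all of them.
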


\begin{proof}
Suppose, by contradiction, that there exists $\overline{z} = (\overline{x}, \overline{y}) \in B_{1/2}$
such that $h (\overline{z}) < 0$ (note that, by (iii), this implies $\overline{y} < 1/(8n)$).
We define
$$
Q:= \left\{ (x, y) \in \mathbb{R}^{n+1} : |x - \overline{x}| < \frac{1}{3}, \, \, 0 < | y | < \frac{1}{4 n} \right\},
$$
and
$$
P (x, y) : = |x - \overline{x}|^2 - n y^2, 
$$
and we set
$$
w (z) := h (z) + \delta P (z) - \sigma (\eta) y, 
$$
where $\delta > 0$ will be chosen later.
Note that $w$ is harmonic in $Q$ and
$$
w (\overline{z}) = h (\overline{z}) - \delta \overline{y}^2 - \sigma (\eta ) \overline{y}  < 0.
$$
Therefore, there exists a minimum point $\hat{z} = (\hat{x}, \hat{y}) \in \partial Q$ such that
$$
\min_{z \in \overline{Q}} w (z) = w (\hat{z}) < 0.
$$
We have the following possibilities.

\vspace{.2cm}

\textbf{Case 1.} $\hat{z} \in \partial Q \cap \{ y > 1/(8n)\}$. 
Thanks to (iii), for $\eta$ and $\delta$ sufficiently small we have
$$
w (\hat{z}) \geq c_0 - \frac{\delta}{16 n} - \frac{\sigma (\eta)}{4 n}  > 0,
$$
which is impossible.

\vspace{.2cm}

\textbf{Case 2.} $\hat{z} \in \partial Q \cap \{ \eta \leq y < 1/(8n)\}$. 
Using property (ii) we obtain that for $\eta$ sufficiently small
$$
w (\hat{z}) \geq \delta \left( \frac{1}{9} - \frac{1}{64 n} \right) - \frac{\sigma (\eta)}{8 n}  > 0,
$$
which is impossible.

\vspace{.2cm}

\textbf{Case 3.} $\hat{z} \in \partial Q \cap \{ (x, y) \in \mathbb{R}^{n+1}:  |x - \overline{x} | = \frac{1}{3}, 0 <  y < \eta \}$. 
Thanks to property (iv), for $\eta$ sufficiently small
\begin{align*}
w (\hat{z}) \geq - \sigma (\eta) + \delta \left( \frac{1}{9} - n \eta^2 \right) - \eta \, \sigma (\eta)
= \delta \left( \frac{1}{9} - n \eta^2 \right) - (1 + \eta)  \, \sigma (\eta) > 0,
\end{align*}
which is impossible.

\vspace{.2cm}

\textbf{Case 4.} $\hat{z} \in \partial Q \cap \{ y = 0 \}$. In this case, if $\hat{z} \in \{ h = 0 \}$ we obtain
$$
w (\hat{z}) = \delta P (\hat{z})  = \delta | \hat{x} - \overline{x} |^2  \geq 0,  
$$
which is impossible.
On the other hand, if $\hat{z} \in \{ h \neq 0 \}$, using Hopf Lemma and property (v)
$$
0 < \partial_y w (\hat{z}) = \partial_y h (\hat{z}) - \sigma (\eta) \leq 0,  
$$
which is impossible.
\end{proof}
We are now ready to prove that the free boundary is $C^1$.
\begin{proof}[Proof of Lemma \ref{C1 free}]
Applying Lemma~\ref{lemma h} to the functions $h_k$ introduced in \eqref{h_k} we obtain \eqref{tau_non_neg}.
As a consequence, for every $L > 0$ there exists $\widetilde{r} = \widetilde{r} (L) > 0$ such that
(recall that $K_u$ is defined in \eqref{fracture})
$$
\partial K_u \cap B^n_{\widetilde{r}} = \{ (x_1, \ldots, x_n) \in B^n_{\widetilde{r}} :  x_n = f_L (x_1, \ldots, x_{n-1}) \},
$$
for a suitable Lipschitz continuous function $f_L$ with Lipschitz constant $L$. 

Consider now a point $\hat x \in \partial K_u \cap B^n_{\widetilde{r}}$ and define the function $v_{\hat x}(x,y):=v(x-\hat x,y)$. Note that we can repeat the same argument (frequency formula and blow-up procedure) with $v_{\hat x}$ in place of $v$. Also, observe that since the function $\hat x \mapsto \Phi_{v_{\hat x}}(r)e^{Cr}$ is continuous for $r>0$ fixed,
the function $\hat x \mapsto \Phi_{v_{\hat x}}(0^+)$ is upper-semicontinuous (being the infimum over $r \in (0,\overline r_0)$ of continuous functions, cf. Proposition~\ref{9}). Hence, since $\Phi_{v_{\hat x}}(0^+) \in \{n+3\}\cup [n+4,\infty)$ (by Proposition~\ref{Phi}) and by assumption  $\Phi_{v_{0}}(0^+)=\Phi_{v}(0^+) = n+3$, we deduce 
that there exists $\hat r>0$ such that $\Phi_{v_{\hat x}}(0^+)=n+3$ for all $\hat x \in \partial K_u \cap B^n_{\hat{r}}$.

This implies that the previous argument can be repeated at every point in $\partial K_u \cap B^n_{\hat{r}}$,
and it follows that 
for any $L>0$ there exists  $\widetilde{r} (L) > 0$ such that $\partial K_u \cap B^n_{\widetilde{r} (L)}(\hat x)$
has Lipschitz constant $L$ for any point $\hat x \in \partial K_u \cap B^n_{\hat{r}}$.
Since $L > 0$ can be made arbitrarily small and the radius $\widetilde{r} (L) > 0$  is independent of $\hat x$, this implies that the free boundary is $C^1$ in a neighborhood of the origin.
\end{proof}

\begin{lemma}
Let the assumptions of Theorem~\ref{opt_reg_theorem} be satisfied, 
suppose that $(0, 0)$ is a free boundary point for $u$,
and that $u(x,0)\geq 0$ in $B_{\overline r_0}^n$.
Let $v$ be given by \eqref{v intro}, and let $\Phi_v$ be as in Proposition~\ref{9}.
Assume that $\Phi_v (0^+) = n+3$.
Then the free boundary is $C^{1, \alpha}$ near $(0, 0)$, for some $\alpha \in (0, 1)$.
\end{lemma}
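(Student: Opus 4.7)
The starting point is Lemma~\ref{C1 free}, which already produces a $C^1$ parametrization of $\partial K_u$ near the origin as $x_n=f(x_1,\ldots,x_{n-1})$ and, crucially, the \emph{monotonicity cone} $\partial_\tau v \geq 0$ in $B_{\overline\rho}$ for every $\tau \in \mathbb{S}^n \cap \{y=0\}$ with $\tau\cdot\mathbf{e}_n\geq \overline c$. The plan is to follow the now-classical scheme of Athanasopoulos--Caffarelli--Salsa \cite{ACS} (and its refinement in \cite{CSS}): upgrade this monotonicity statement to a \emph{boundary Harnack inequality} for tangential derivatives of $v$, which forces the normal direction to $\partial K_u$ to depend in a H\"older fashion on the base point.

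Fix two unit vectors $\tau,\tau' \in \mathbb{S}^n\cap\{y=0\}$ with $\tau\cdot\mathbf{e}_n,\tau'\cdot\mathbf{e}_n\geq \overline c$, and set $w:=\partial_\tau v$, $w':=\partial_{\tau'} v$. Both functions are harmonic in $B_{\overline\rho}\setminus\{y=0\}$ (as tangential derivatives of a function harmonic off the slit) and, by Lemma~\ref{C1 free}, are nonnegative in $B_{\overline\rho}$. Differentiating tangentially the boundary relation $\partial_y v = g'(2v)-g'(0^+)$, valid on $B_{\overline\rho}^n\cap\{v>0\}$, yields
\begin{equation*}
\partial_y w = 2 g''(2v)\, w \qquad \text{on } B_{\overline\rho}^n\cap\{v>0\},
\end{equation*}
and analogously for $w'$; on the coincidence set $B_{\overline\rho}^n\cap\{v=0\}$, the optimal $C^{1,1/2}$ regularity of $v$ forces $w\equiv w'\equiv 0$. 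Thus, after even reflection in $y$, $w$ and $w'$ are nonnegative harmonic functions in $B_{\overline\rho}\setminus(\{v=0\}\cap\{y=0\})$, vanishing on the $C^1$-coincidence set (by Lemma~\ref{C1 free}) and satisfying an oblique condition on its complement whose coefficient is bounded by $2\|g''\|_{L^\infty}$.

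In this configuration, the boundary Harnack inequality of \cite{ACS} (adapted to allow a bounded oblique perturbation, exactly as in \cite{CSS}) produces $\alpha\in(0,1)$ and $\rho_*>0$ such that the quotient $w/w'$ admits a continuous extension to $B_{\rho_*}$ which is H\"older continuous of exponent $\alpha$. Taking $\tau = \mathbf{e}_i$ for $i=1,\dots,n-1$ and $\tau'=\mathbf{e}_n$, the implicit-function representation $v(x', f(x'),0)=0$ (recall $\partial_{x_n} v>0$ along the free boundary, coming from the model profile \eqref{case a} and the Hopf lemma applied to $\partial_{x_n}v$) gives
\begin{equation*}
\partial_i f(x') = -\frac{\partial_{x_i} v}{\partial_{x_n} v}\bigg|_{(x',f(x'),0)},
\end{equation*}
and the boundary Harnack estimate turns this into $\nabla f \in C^{0,\alpha}$, which is exactly the claimed $C^{1,\alpha}$ regularity of $\partial K_u$ near the origin.

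The main technical point is the boundary Harnack inequality in the presence of the lower-order term $2g''(2v)w$: compared to the pure Signorini problem (where the boundary condition on $\{v>0\}$ is $\partial_y w = 0$), we have an oblique condition with a bounded zero-order coefficient. However, since $\|g''\|_{L^\infty}<\infty$ and the perturbation is multiplicative in $w$ itself, a Harnack-chain plus barrier argument in the spirit of Lemma~\ref{lemma h} (and entirely analogous to the one in \cite[Section~4]{ACS}) carries through with only minor modifications, yielding the desired H\"older control on quotients and hence the theorem.
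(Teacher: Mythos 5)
Your overall strategy matches the paper's: establish a boundary Harnack inequality for quotients of tangential derivatives of $v$, then deduce H\"older regularity of the unit normal via the implicit-function representation of $f$. But there are two concrete gaps.

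First, the paper does not ``adapt [ACS] with minor modifications.'' It recasts the boundary condition as a fractional Laplacian identity via \cite{CS}, and then performs a \emph{rescaling}, setting $h_r(x):=(r/d_r)\,\partial_{x_n}v(rx,0)$ and analogously for the oblique direction. This rescaling is essential: the standing assumption $\Phi_v(0^+)=n+3$ forces $d_r/r^2\to\infty$, hence $r^2/d_r\to0$, and the resulting bound $|\Delta^{1/2}h_r|\le C\,r^2/d_r\,(1+\|g''\|_{L^\infty})$, together with $\sup_{B^n_{1/2}}h_r^+\ge 1$, puts the problem in the framework of \cite[Theorem 1.6]{ROS}, where the right-hand side must be \emph{small} relative to the supremum of the function. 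Your proposal treats the Robin term $\partial_y w = 2g''(2v)w$ at a fixed scale as a ``bounded oblique perturbation,'' but $2\|g''\|_{L^\infty}$ is of order one, not small, and the ACS barrier/Harnack-chain argument is designed for the zero-Neumann Signorini case. Without the rescaling that makes the perturbation vanish as $r\to 0$, the claim that the argument ``carries through with only minor modifications'' is unsubstantiated; the hypothesis $\Phi_v(0^+)=n+3$ would play no visible role in your proof, which is a warning sign.

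Second, in your final step you take $\tau=\mathbf e_i$ for $i<n$ and $\tau'=\mathbf e_n$. But $\mathbf e_i\cdot\mathbf e_n=0<\overline c$, so $\mathbf e_i$ lies \emph{outside} the monotonicity cone of Lemma~\ref{C1 free}; consequently $\partial_{x_i}v$ need not be nonnegative near the origin, and the boundary Harnack (which requires both comparison functions nonnegative) does not apply to this pair. The paper instead uses $\tau_i=(\mathbf e_n+\mathbf e_i)/\sqrt 2$, which is inside the cone, obtains $\partial_{\tau_i}v/\partial_{x_n}v\in C^{0,\alpha}$, and then extracts $\partial_{x_i}v/\partial_{x_n}v$ via the linear identity $\partial_{\tau_i}=(\partial_{x_n}+\partial_{x_i})/\sqrt 2$. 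This is a genuine (if easily fixed) error in the proposal.
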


\begin{proof}
We start by observing that the function $h (x, y) := \partial_{x_n} v (x, y)$ satisfies 
$$
 \partial_y h (x, 0) = 2 g'' (2 v (x,0)) h (x, 0) \qquad \text{ if } v (x, 0) > 0.
$$
Therefore,  by \cite{CS},
\begin{equation} \label{eq h}
( \Delta^{\frac{1}{2}} h (\cdot, 0) ) (x) = 2 g'' (2 v (x,0)) h (x, 0) + \partial_y \overline{h} (x, 0) - \partial_y h (x, 0) \qquad \text{ if } v (x, 0) > 0,
\end{equation}
where $\overline{h}$ is the harmonic extension of $h (\cdot, 0)$ to $\mathbb{R}^n \times (0, \infty)$.
Note that $\overline{h} - h$ is smooth near $\{ y = 0 \}$, 
since it is harmonic in $\mathbb{R}^n \times (0, A)$
with zero boundary condition on $\{ y = 0 \}$.
For every $0 < r \ll 1$, set 
$$
h_{r} (x) : = \frac{r}{d_r} h (rx, 0), \qquad x \in B^n_1, 
$$
where $d_r$ is given by \eqref{defin vr}.
From \eqref{eq h} it follows that, if $v (rx, 0) > 0$,
\begin{align*}
&( \Delta^{\frac{1}{2}} h_{r} (\cdot, 0)) (x) = \frac{r^2}{d_r} ( \Delta^{\frac{1}{2}} h (\cdot, 0) ) (r x) \\
&= \frac{r^2}{d_r} \left[ 2  g'' (2 v ( r x,0)) h (r x , 0) 
+ \partial_y \overline{h} (r x, 0) - \partial_y h (r x, 0) \right]  
= : F (x).
\end{align*}
Since $v$ and $h$ are bounded, we have
$$
| F | \leq C\frac{r^2}{d_r} (1+\| g'' \|_{L^{\infty}}),
$$
for some positive constant $C$.
Note also that, for $r$ sufficiently small, $h_r \geq 0$ in $B^n_1$ thanks to \eqref{tau_non_neg}.
Therefore, using the fact that $h_r = h_r^+$ in $B^n_1$ we obtain
$$
( \Delta^{\frac{1}{2}} h^+_{r} (\cdot, 0)) (x) \geq ( \Delta^{\frac{1}{2}} h_{r} (\cdot, 0)) (x) = F (x) \quad 
\text{ if } v (rx, 0) > 0.
$$
Moreover, by definition of $h_r$ we have $h_r (x, 0)= (\partial_{e_n} v_r ) (x, 0)$, 
where $v_r$ is defined in \eqref{defin vr}. Therefore, thanks to \eqref{convunif},
$$
h_r \to \partial_{x_n} v_{\infty} \quad \text{ uniformly in } B^n_{2/3},
$$
as $r\to0^+$.
Recalling \eqref{case a}, it follows that for $r$ small enough
$$
\sup_{B^n_{1/2}} h^+_r  = \sup_{B^n_{1/2}} h_r  \geq 1. 
$$
Let now $i \in \{ 1, \ldots, n-1\}$, $\tau_i := \frac{\mathbf e_n +  \mathbf e_i}{\sqrt{2}}$.
We can repeat the same argument used for $h$ for the function $h_{i} (x, y):= \partial_{\tau_i} v (x, y)$, 
obtaining that the function $x \mapsto h^+_{i, r} (x, 0) := (r/d_r) h^+_i (rx, 0)$ satisfies
$$
\begin{cases}
( \Delta^{\frac{1}{2}} h^+_{i, r} (\cdot, 0)) (x) \geq F_i (x) & \text{ for every } x \in B^n_1 \text{ with } v (rx, 0) > 0, \\
h^+_{i, r} (x, 0) =  0 & \text{ for every } x \in B^n_1 \text{ with } v (rx, 0) = 0,
\end{cases}
$$
with 
$$
| F_i | \leq  C\frac{r^2}{d_r} (1+\| g'' \|_{L^{\infty}}), \qquad 
\sup_{B^n_{1/2}} h^+_{i,r}  \geq 1. 
$$
Since $r^2/d_r \to 0$, for $r$ sufficiently small we can apply \cite[Theorem 1.6]{ROS}
to the nonnegative functions $h^+_{r} (\cdot, 0)$ and $h^+_{i,r} (\cdot, 0)$. 
We then obtain that the ratio $h^+_{i, r}/h^+_r$ is $C^{0, \alpha}$ in $B^n_{1/2}$, 
for some $\alpha \in (0, 1)$.
Since equalities $h^+_{i, r} = h_{i,r}$ and $h^+_{r} = h_{r}$ hold true in $B^n_{1/2}$, 
it follows that $h_i /h$ is of class $C^{0, \alpha}$ is a neighborhood of the origin. 
Let now $f$ be the function given by Lemma \ref{C1 free}.
Since 
$$
\frac{h_i}{h} = \frac{1}{\sqrt{2}} + \frac{1}{\sqrt{2}} \, \frac{ \partial_{x_i} v (x, 0)}{\partial_{x_n} v (x, 0)}, 
$$
and 
$$
\nabla f = -
\left( \frac{ \partial_{x_1} v }{\partial_{x_n} v } , \ldots, \frac{ \partial_{x_{n-1}} v}{\partial_{x_n} v} \right),  
$$
this implies that $f$ is $C^{1, \alpha}$ in a neighborhood of the origin.
\end{proof}
\end{section}

\begin{section}{Acknowledgments}

\noindent
L. Caffarelli is supported by NSF grant DMS-1160802.
\smallskip

\noindent
F. Cagnetti would like to thank Massimiliano Morini for bringing this problem to his attention. 
F. Cagnetti was partially supported by FCT through UT Austin$|$Portugal program, 
and by the EPSRC under the Grant EP/P007287/1 
``Symmetry of Minimisers in Calculus of Variations''.
F. Cagnetti acknowledges the hospitality of the Mathematics Department at UT Austin and of the FIM institute at ETH Z\"urich, where most of this work was done.

\smallskip

\noindent
A. Figalli is supported by the ERC Grant ``Regularity and Stability in Partial Differential Equations (RSPDE)''.

\smallskip

\noindent
The authors declare to have no conflict of interest.

\end{section}

\begin{thebibliography}{30}

\bibitem{A} M. Allen:
Separation of a lower dimensional free boundary in a two-phase problem. 
\textit{Math. Res. Lett.} \textbf{19} (2012), no. 5, 1055--1074. 

\bibitem{ALP} M. Allen, E. Lindgren, A. Petrosyan: 
The two-phase fractional obstacle problem. 
\textit{SIAM J. Math. Anal.} \textbf{47} (2015), no. 3, 1879--1905. 

\bibitem{AP} M. Allen, A. Petrosyan: 
A two-phase problem with a lower-dimensional free boundary. 
\textit{Interfaces Free Bound.} \textbf{14} (2012), no. 3, 307--342. 

\bibitem{ACFS} M. Artina, F. Cagnetti, M. Fornasier, F. Solombrino: 
Linearly Constrained Evolutions of Critical Points and an Application to Cohesive Fractures.
\textit{Math. Models Methods Appl. Sci.} \textbf{27} (2017), no. 2, 231--290. 

\bibitem{AC} I. Athanasopoulos, L. Caffarelli: 
Optimal regularity of lower dimensional obstacle problems. 
\textit{Zap. Nauchn. Sem. S.-Peterburg. Otdel. Mat. Inst. Steklov} (POMI) 310 (2004), 
\textit{Kraev. Zadachi Mat. Fiz. i Smezh. Vopr. Teor. Funkts} \textbf{35} [34], 49--66, 226; 
translation in \textit{J. Math. Sci. (N. Y.)} \textbf{132} (2006), no. 3, 274--284.

\bibitem{ACS} I. Athanasopoulos, L. Caffarelli, S. Salsa: 
The structure of the free boundary for lower dimensional obstacle problems. {\em Amer. J. Math.} {\bf 130} (2008), no. 2, 485--498.

\bibitem{Bar62} G.I. Barenblatt: The mathematical theory of
equilibrium cracks in brittle fracture. 
\textit{Adv. Appl. Mech.} \textbf{7} (1962), 55--129.

\bibitem{CF} L.A. Caffarelli, A. Figalli: Regularity of solutions
to the parabolic fractional obstacle problem. 
\textit{J. Reine Angew. Math.} \textbf{680} (2013), 191--233.

\bibitem{CSS}
L.A. Caffarelli, S. Salsa, L. Silvestre:
Regularity estimates for the solution and the free boundary
of the obstacle problem for the fractional Laplacian.
{\em Invent. Math.} \textbf{171} (2008), 425--461.

\bibitem{CS}
L.A. Caffarelli, L. Silvestre:
An extension problem related to the fractional Laplacian.
{\em Comm. Partial Differential Equations} \textbf{32} (2007), no. 7-9, 1245--1260. 

\bibitem{C} F. Cagnetti: A vanishing viscosity approach to fracture growth
in a cohesive zone model with prescribed crack path.
\textit{Math. Models Methods Appl. Sci.} \textbf{18} (2008), no. 7, 1027--1071. 

\bibitem{CT} F. Cagnetti, R. Toader:
Quasistatic crack evolution for a cohesive zone model 
with different response to loading and unloading:
a Young measures approach. 
\textit{ESAIM Control Optim. Calc. Var.} \textbf{17} (2011), 1--27.

\bibitem{CLO} V. Crismale, G. Lazzaroni, G. Orlando:
Cohesive fracture with irreversibility: quasistatic evolution for a model subject to fatigue.
\textit{Math. Models Methods Appl. Sci.} \textbf{28} (2018), no. 7, 1371--1412.

\bibitem{DDMM} G. Dal Maso, A. DeSimone, M.G. Mora, M. Morini:
A vanishing viscosity approach to quasistatic evolution in plasticity with softening.
\textit{Arch. Ration. Mech. Anal.} \textbf{189} (2008), no. 3, 469--544.

\bibitem{DMFT} G. Dal Maso, G.A. Francfort, R. Toader:
Quasi-static evolution in brittle fracture: the case of bounded solutions.
\textit{Calculus of variations: topics from the mathematical heritage of E. De Giorgi},
245--266, Quad. Mat., 14, Dept. Math., Seconda Univ. Napoli, Caserta, 2004.

\bibitem{DMFT2} G. Dal Maso, G.A. Francfort, R. Toader:
Quasistatic crack growth in nonlinear elasticity.
\textit{Arch. Ration. Mech. Anal.} \textbf{176} (2005), no. 2, 165--225.

\bibitem{DT02} G. Dal Maso, R. Toader:
A model for the quasi-static growth of brittle fractures:
existence and approximation results. 
{\it Arch. Ration. Mech. Anal.\/} {\bf 162} (2002), 101-135.

\bibitem{DalToa02} G. Dal Maso, R. Toader:
A model for the quasi-static growth of brittle fractures
based on local minimization.
\textit{Math. Models Methods Appl. Sci.},
\textbf{12}/12 (2002), 1773--1799.
      
\bibitem{DMZ} G. Dal Maso, C. Zanini:
Quasi-static crack growth for a cohesive zone model 
with prescribed crack path.
\textit{Proc. Roy. Soc. Edinburgh Sect. A}, 
\textbf{137A} (2007), 253--279.

\bibitem{FKS}
E.B. Fabes, C.E. Kenig, R.P. Serapioni: 
The local regularity of solutions of degenerate elliptic equations. 
\textit{Commun. Partial Differ. Equations} \textbf{7}(1), (1982), 77--116. 

\bibitem{FrMa98} G.A. Francfort, J.-J. Marigo: 
Revisiting brittle fracture as an energy minimization problem.
{\it J. Mech. Phys. Solids\/} {\bf 46} (1998), 1319-1342.

\bibitem{Mielhand} A. Mielke: Evolution of rate-independent systems.
Handbook of differential equations, evolutionary equations, vol. 2,
C.M. Dafermos, E. Feireisl (eds.), 461--559 Elsevier, Amsterdam, 2005.

\bibitem{ROS} X. Ros-Oton, J. Serra:
Boundary regularity estimates for nonlocal elliptic equations in $C^1$ and $C^{1,\alpha}$ domains.
\textit{Ann. Mat. Pura Appl.} \textbf{196} (2017), 1637--1668.

\bibitem{Silv}
L. Silvestre: The regularity of the obstacle problem for a fractional power 
of the Laplace operator.
{\em Comm. Pure Appl. Math.} \textbf{60} (2007), no. 1, 67--112.

\end {thebibliography}
\end{document}